\newtheorem{theorem}{Theorem}[section]
\newtheorem{assumption}[theorem]{Assumption}
\newtheorem{definition}[theorem]{Definition}
\newtheorem{proposition}[theorem]{Proposition}
\newtheorem{corollary}[theorem]{Corollary}
\newtheorem{lemma}[theorem]{Lemma}
\newtheorem{remark}[theorem]{Remark}
\newtheorem{example}[theorem]{Example}
\newtheorem{examples}[theorem]{Examples}
\newtheorem{open question}[theorem]{Open Question}
\newtheorem{c/p}[theorem]{Conjecture/Proposition}
\newcommand{\brak}[1]{\left(#1\right)} 
\def\vint{\mathop{\mathchoice%
 {\setbox0\hbox{$\displaystyle\intop$}\kern 0.22\wd0%
 \vcenter{\hrule width 0.6\wd0}\kern -0.82\wd0}%
 {\setbox0\hbox{$\textstyle\intop$}\kern 0.2\wd0%
 \vcenter{\hrule width 0.6\wd0}\kern -0.8\wd0}%
 {\setbox0\hbox{$\scriptstyle\intop$}\kern 0.2\wd0%
 \vcenter{\hrule width 0.6\wd0}\kern -0.8\wd0}%
 {\setbox0\hbox{$\scriptscriptstyle\intop$}\kern 0.2\wd0%
 \vcenter{\hrule width 0.6\wd0}\kern -0.8\wd0}}%
 \mathopen{}\int}
\newcommand{\DF}{\mathcal{E}}
\newcommand{\B}{\mathbf B}
\title{BV functions and fractional Laplacians on Dirichlet spaces}
\author{Patricia Alonso Ruiz\footnote{P.A.R. was partly supported by the grant DMS~\#1855349 of the NSF (U.S.A.).},  
Fabrice Baudoin\footnote{F.B. was partly supported by the grants DMS~\#1660031, ~\#1901315 of the NSF (U.S.A.) and a Simons Foundation Collaboration grant. }, 
Li Chen, 
Luke Rogers\footnote{L.R. was partly supported by the grant DMS~1659643 of the NSF (U.S.A.). }, \\ 
Nageswari Shanmugalingam\footnote{N.S. was partly supported by the grant DMS~\#1800161 of the NSF (U.S.A.). }, 
Alexander Teplyaev\footnote{A.T. was partly supported by the grant DMS~\#1613025 of the NSF (U.S.A.). }}  
\date{\today}
\begin{document}

\maketitle

\begin{abstract}
We study $L^p$ Besov critical exponents and isoperimetric and Sobolev inequalities associated with fractional Laplacians on metric measure spaces. The main tool is the theory of heat semigroup based Besov  classes in Dirichlet spaces that was introduced by the authors in previous works.
\end{abstract}

\tableofcontents

\section{Introduction}
The authors presented, in~\cite{ABCRST3}, a heat kernel approach to the theory of functions of bounded variation (BV) in Dirichlet spaces with sub-Gaussian heat kernel bounds.  In the present work those results are extended to the non-local setting using a fractional power of the Laplacian.  We believe that this approach may admit further connections to research in PDE theory. In particular, the study of fractional Sobolev spaces and BV functions in different levels of 
generality 
is currently the subject of  extensive research, see e.g.~\cite{Zue19,KM19,MP19} for domains with fractal boundary, manifolds and Carnot groups, and also the distributional approach recently introduced in~\cite{CS19}. We also refer to \cite{2019arXiv190702281G} and the references therein.

\medskip

An easily studied class of non-local Dirichlet forms are those whose associated semigroup is obtained via subordination and have as generator a fractional Laplacian $(-L)^\delta$, where $L$ is the original (negative definite) Laplacian, see Section~\ref{S:Preliminaries}. For simplicity of  presentation and in order to use results previously proved in \cite{ABCRST3}, the present paper focuses on that particular case, though some of our results could be extended to a slightly more general setting. The definition of Besov spaces $\mathbf{B}^{p,\alpha}(X)$ based on the heat semigroup in this setting follows~\cite{ABCRST1,ABCRST2,ABCRST3} and is based on a generalization of the heat semigroup characterization of BV functions on $\mathbb{R}^d$ introduced in~\cite{MPPP07}, whose ideas go back to the work of de Giorgi~\cite{DeG54} and Ledoux~\cite{Led94}. For any $\alpha>0$ and $1\leq p<\infty$, the space $\mathbf{B}^{p,\alpha}(X)$ is the collection of functions $f\in L^p(X,\mu)$ for which
\begin{equation*}
\| f \|_{p,\alpha}= \sup_{t >0} t^{-\alpha} \left( \int_X  P_t^{(\delta)}(|f-f(x) |^p)(x)\,d\mu(x) \right)^{1/p}<\infty,
\end{equation*}
where $P^{(\delta)}_t $ denotes the heat semigroup associated with the fractional Laplacian $(-L)^\delta$ of order $0<\delta<1$. For general properties of these spaces we refer to~\cite{ABCRST1}; further results in the context of local Dirichlet forms with Gaussian heat kernel estimates can be found in~\cite{ABCRST2}, while the sub-Gaussian case is treated in~\cite{ABCRST3}. 

\medskip

In the local case, the space $\mathbf{B}^{p,\alpha}(X)$ can be identified with a space of Korevaar-Schoen type~\cite[Theorem 2.4]{ABCRST3}. In the non-local setting discussed in this paper this is true for $1\le p\le \infty$ and $\alpha<1/p$, see Theorem~\ref{T:KS-Besov}, while for $\alpha>1/p$ the spaces are trivial, c.f.\ Proposition~\ref{trivial Lp}. Note that the threshold $\alpha=1/p$ reflects only the non-locality and is independent of the parameter $\delta$ of the fractional Laplacian.

\medskip

Proposition~\ref{P:W_norm} shows that the case $\alpha=1/p$ provides a natural candidate for characterizing fractional Sobolev spaces in metric measure spaces via heat semigroups. This characterization coincides with Gagliardo's classical definition~\cite{Gag57} on $\mathbb{R}^d$, given by
\begin{equation*}
W^{s,p}(\Omega):=\bigg\{f\in L^p(\Omega,dx)\;\colon\; \bigg(\int_\Omega\int_\Omega\frac{|f(x)-f(y)|^p}{|x-y|^{d+sp}}dy\,dx\bigg)^{1/p}<\infty\bigg\}
\end{equation*}
for $0<s<1$, $1\leq p<\infty$ and $\Omega\subseteq \mathbb{R}^d$.

We use our heat semigroup approach to 
define and study the space of BV functions, which arises as the space $\mathbf{B}^{1,\alpha_1^\#}(X)$ at a critical parameter $\alpha_1^\#$, the latter being defined as
\[
\alpha_1^\#(X):=\sup\{\alpha>0\,\colon\,\mathbf{B}^{\alpha,1}(X)\text{ contains non-constant functions}\}.
\]
Our main results are a co-area formula, an $L^1$-pseudo Poincar\'e inequality and isoperimetric inequalities. An analogous definition of the critical exponents $\alpha_p^\#$ for $p>1$ further allows us to investigate the spaces $\mathbf{B}^{p,\alpha_p^\#}(X)$, which turn out to coincide with fractional Sobolev spaces. Besides proving Sobolev inequalities, we show that these spaces are dense in $L^p$.

An interesting feature of the theory is the following dichotomy: when the space of BV functions is a fractional Sobolev space it is possible to prove all results on BV functions without making any further geometric assumptions, but when the BV space is a Korevaar-Schoen space our proofs need the significant assumption that a \textit{weak Bakry-\'Emery} nonnegative curvature condition of the form
\begin{equation*}
| P^{(\delta)}_t f (x)-P^{(\delta)}_tf(y)| \le C \frac{d(x,y)^\kappa}{t^{\frac{\kappa}{\delta d_W}}} \| f\|_{ L^\infty(X,\mu)},
\end{equation*}
is valid for some suitable $0<\kappa<d_W$, where $d_W>0$ denotes the so-called walk dimension of $X$, see Assumption~\ref{sGKHE} and Lemma~\ref{L:wBECdelta}.  This condition expresses  how the fractional order of the Laplacian affects the regularity of the associated semigroup. The analogous condition in the local setting was introduced in earlier work by the authors~\cite{ABCRST3}.  A particular version of this dichotomy is that if we subordinate a heat flow that satisfies a weak Bakry-\'Emery inequality, then for $\delta>1-\frac{\kappa}{d_W}$ we need the corresponding Bakry-\'Emery inequality to prove results on BV, whereas if $\delta\leq1-\frac{\kappa}{d_W}$ the process is sufficiently non-local that elementary estimates suffice.


\medskip

The structure of the paper is as follows: Section~\ref{S:Preliminaries} sets up the general framework for the spaces under consideration and briefly reviews the construction of fractional Laplacians and their corresponding semigroups via subordination. In Section~\ref{S:Besov}, we introduce the Besov spaces $\mathbf{B}^{p,\alpha}(X)$ and classify them according to the three cases $\alpha<1/p$, $\alpha=1/p$ and $\alpha>1/p$. The Bakry-\'Emery nonnegative curvature and the dichotomy between the Korevaar-Schoen and fractional Sobolev space settings appears in Section~\ref{S:BVandFL}, where BV functions and related functional inequalities are discussed. Section~\ref{S:fractional_Lp} concerns the $L^p$ theory for $p>1$ and relates the spaces $\mathbf{B}^{p,\alpha}(X)$ at criticality with classical fractional Sobolev spaces. The paper concludes in Section~\ref{S:final} with an overview of the general isoperimetric and Sobolev inequalities available in the non-local setting.

%


\section{Preliminaries: Fractional Laplacians on metric measure spaces}\label{S:Preliminaries}

\subsection{Standing assumptions}
Throughout the paper, $(X,d,\mu)$ will denote a locally compact metric measure space, where $\mu$ is a Radon measure supported on $X$. It will be equipped with a Dirichlet form $(\mathcal{E},\mathcal{F}={\rm dom}(\mathcal{E}))$, that is: a densely defined, closed, symmetric and Markovian bilinear form on $L^2(X,\mu)$, see~\cite{FOT94,CF12}. The vector space of continuous functions with compact support in $X$ is denoted $C_c(X)$ and $C_0(X)$ is its closure with respect to the supremum norm. Recall, see for example~\cite[p.6]{FOT94}, that a Dirichlet form $(\mathcal{E},\mathcal{F})$ is called regular if it admits a core, which is a subset of $C_c(X) \cap \mathcal{F}$ that is dense in $C_c(X)$ in the supremum norm and dense in $\mathcal{F}$ in the norm 
\[
\|f\|_{\DF_1}:= \left( \| f \|_{L^2(X,\mu)}^2 + \mathcal{E}(f,f) \right)^{1/2}.
\]
Also, $(\mathcal{E},\mathcal{F})$ is said to be strongly local if $\mathcal{E}(f,g)=0$ for any two functions $f,g\in\mathcal{F}$ with compact supports such that $f$ is constant in a neighborhood of the support of $g$. The heat semigroup associated with the Dirichlet form $(\mathcal{E},\mathcal{F})$ is denoted $\{P_t\}_{t>0}$, and its associated infinitesimal generator is $L$.
We make the following assumptions on the Dirichlet space $(X,\mathcal{E},\mathcal{F}, \mu)$.
\begin{assumption}[Regularity]\label{A1}\ 
\begin{enumerate}[leftmargin=2.5em,label={\rm (A\arabic*)}]
\item For any $x\in X$ and $r>0$, $B(x,r):=\{y\in X\mid d(x,y)<r\}$ has compact closure;
\item The space $(X,d,\mu)$ is Ahlfors $d_H$-regular, i.e.\ there exist $c_{1},c_{2},d_{H}>0$ such that 
\[
c_{1}r^{d_{H}}\leq\mu\bigl(B(x,r)\bigr)\leq c_{2}r^{d_{H}}\qquad \forall\,r\geq 0.
\]
\item The Dirichlet form $(\mathcal{E},\mathcal{F})$ is regular and strongly local.
\end{enumerate}
\end{assumption}
\begin{assumption}[Sub-Gaussian Heat Kernel Estimates]\label{sGKHE}
The semigroup $\{P_t\}_{t>0}$ has a continuous heat kernel $p_t(x,y)$ satisfying, for some $c_{3},c_{4}, c_5, c_6 \in(0,\infty)$ and $d_{W}\in [2,+\infty)$,
 \begin{equation}\label{eq:subGauss-upper}
 c_{5}t^{-d_{H}/d_{W}}\exp\biggl(-c_{6}\Bigl(\frac{d(x,y)^{d_{W}}}{t}\Bigr)^{\frac{1}{d_{W}-1}}\biggr) 
 \le p_{t}(x,y)\leq c_{3}t^{-d_{H}/d_{W}}\exp\biggl(-c_{4}\Bigl(\frac{d(x,y)^{d_{W}}}{t}\Bigr)^{\frac{1}{d_{W}-1}}\biggr)
 \end{equation}
 for $\mu\!\times\!\mu$-a.e.\ $(x,y)\in X\times X$ and each $t\in\bigl(0,+\infty\bigr)$. 
\end{assumption}

The parameter $d_H$ is the Hausdorff dimension, and the parameter $d_W$ is usually called the walk dimension of the space.
Under these assumptions, the Dirichlet form admits the expression
\[
\mathcal{E}(f,f) \simeq \limsup_{r\to 0^+}\int_X\int_{B(x,r)}\frac{|f(y)-f(x)|^2}{r^{d_W+d_H}} \, d\mu(y)\, d\mu(x)
\]
for any $f \in \mathcal{F}$, c.f.~\cite[Section 5.3]{Gri03} and \cite[Theorem 3.3.1]{Baudoin20}. In general, we write $a\simeq b$ if there exist $c_1,c_2>0$ such that $c_1a\leq b\leq c_2a$.

\medskip

In the same spirit as~\cite{ABCRST3}, the definition of fractional Sobolev spaces introduced in Section~\ref{S:Besov} relies on the following class of Korevaar-Schoen spaces. The classical definitions can be found in~\cite{KS93,KST04}. For any $\lambda>0$ and $1\leq p<\infty$ one defines 
\begin{equation}\label{E:def_KS}
KS^{\lambda, p}(X):=\{f\in L^p(X,\mu)\,\colon\,\|f\|_{KS^{\lambda p}(X)}<\infty\},
\end{equation}
where
\begin{equation}\label{E:def_KS_norm}
\|f\|_{KS^{\lambda, p}(X)}^p:=\limsup_{r\to 0^+}\int_X\int_{B(x,r)}\frac{|f(x)-f(y)|^p}{r^{\lambda p}\mu(B(x,r))}d\mu(y)\,d\mu(x).
\end{equation}
Moreover, we define the space $\mathcal{KS}^{\lambda,p}(X)$ analogously to~\eqref{E:def_KS} but with the semi-norm
\begin{equation}\label{E:def_calKS_norm}
\|f\|_{\mathcal{KS}^{\lambda, p}(X)}^p:=\sup_{r>0}\int_X\int_{B(x,r)}\frac{|f(x)-f(y)|^p}{r^{\lambda p}\mu(B(x,r))}d\mu(y)\,d\mu(x).
\end{equation}

The following weak Bakry-\'Emery type estimate  will be essential in our discussion of BV functions. Some of the results proved later, in particular those in Section~\ref{S:Besov}, will not require this condition, however we will assume it throughout the paper for ease of presentation.

\begin{assumption}[Weak Bakry-\'Emery  estimate]\label{A3}\
There exists a constant $C>0$  such that for every $t >0$, $g \in L^\infty(X,\mu)$ and  $x,y \in X$,
\begin{equation}\label{E:wBECD}
| P_t g (x)-P_tg(y)| \le C \frac{d(x,y)^\kappa}{t^{\kappa/d_W}} \| g \|_{ L^\infty(X,\mu)},
\end{equation}
where $\kappa$ is a critical exponent defined by
\[
d_W-\kappa=\sup \{ \lambda >0\, :\, KS^{\lambda,1}(X) \text{ contains non-constant functions}\}.
\]
\end{assumption}

This condition appears in~\cite{ABCRST3} where several examples of Dirichlet spaces satisfying it are studied.

\subsection{Subordination and fractional Laplacians}
Let $0 < \delta < 1$ be fixed throughout the paper. The fractional power of $(-L)^\delta$ can be defined via the following formula, see (5) in~\cite[p. 260]{Yos80},
\begin{equation}\label{As}
(-L)^\delta f = - \frac{\delta}{\Gamma(1-\delta)} \int_0^\infty t^{-\delta-1} (P_t f - f)\ dt.
\end{equation}

It is well-known that $(-L)^\delta$ is the generator of a Markovian semigroup $\{P_t^{(\delta)}\}_{t>0}$ which is related to $\{P_t\}_{t>0}$ by the subordination formula
\begin{equation}\label{eq:Ptdelta}
 P_t^{(\delta)} f(x):=\int_0^\infty \eta^{(\delta)}_t(s) \,P_sf(x)\, ds,
\end{equation}
where $\eta^{(\delta)}_t(s)$ is the non-negative continuous function such that
\begin{equation}\label{E:subordinator_int}
\int_0^\infty\eta^{(\delta)}_t(s)e^{-s\lambda}ds=e^{-t\lambda^\delta}
\end{equation}
for any $\lambda>0$, see for example, Proposition~1 in~\cite[p.260]{Yos80}. In addition, the subordinator $\eta^{(\delta)}_t(s)$ satisfies the following upper bound 
\begin{equation}\label{bound eta}
\eta^{(\delta)}_t(s) \le C \left(\frac{1}{t^{1/\delta}} \wedge \frac{t}{s^{1+\delta}} \right)
\end{equation}
and  for $-\infty < \alpha <\delta$
\begin{equation}\label{moment eta}
\int_0^{+\infty} \eta^{(\delta)}_t(s) s^{\alpha} ds=\frac{\Gamma(1-\alpha/\delta)}{\Gamma(1-\alpha)} t ^{\alpha /\delta}.
\end{equation}
If $\alpha \ge \delta$, then
\[
\int_0^{+\infty} \eta^{(\delta)}_t(s) s^{\alpha} ds=+\infty.
\]
It is known that under Assumptions \ref{A1} and \ref{sGKHE} the Dirichlet form $(\mathcal{E}^{(\delta)},\mathcal{F}^{(\delta)})$ associated with $P_t^{(\delta)}$ satisfies
\begin{equation}\label{E:Edelta}
\mathcal{E}^{(\delta)}(f,f)\simeq \int_X\int_X\frac{|f(x)-f(y)|^2}{d(x,y)^{d_H+\delta d_W}}\,d\mu(y)d\mu(x)\
\end{equation}
and that for the corresponding heat kernel $p_t^{(\delta)} (x,y)$ one has
\def\Phihke#1{\left(1+#1\right)^{-d_H-\delta d_W}}
\begin{equation}\label{eq:HKE-non-loc}
c_{5}t^{-\frac{d_{H}}{\delta d_{W}}}
\Phihke{ c_{6} \frac{d(x,y)}{t^{\frac{1}{\delta d_{W}}}}}
\le 
p_t^{(\delta)} (x,y)
\le
c_{3}t^{-\frac{d_{H}}{\delta d_{W}}}
\Phihke{ c_{4} \frac{d(x,y)}{t^{\frac{1}{\delta d_{W}}}}},
\end{equation}
see, for example, \cite[Lemma 5.4]{Gri03}.
From~\eqref{eq:Ptdelta}, \eqref{moment eta} and Assumption~\ref{A3} one readily obtains the following estimate.
\begin{lemma}\label{L:wBECdelta}
There exists a constant $C>0$  such that for every $t >0$, $g \in L^\infty(X,\mu)$ and  $x,y \in X$,
\begin{equation*}\label{wBECdelta}
| P_t^{(\delta)} g (x)-P_t^{(\delta)} g(y)| \le C \frac{d(x,y)^\kappa}{t^{\frac{\kappa}{\delta d_W}}} \| g \|_{ L^\infty(X,\mu)}.
\end{equation*}
\end{lemma}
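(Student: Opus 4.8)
The plan is to transfer the estimate directly through the subordination formula~\eqref{eq:Ptdelta}, using that the subordinator $\eta^{(\delta)}_t$ is nonnegative and that the moment integral~\eqref{moment eta} is finite for the exponent we need. Concretely, for fixed $t>0$, $g\in L^\infty(X,\mu)$ and $x,y\in X$, I would begin from
\[
P_t^{(\delta)}g(x)-P_t^{(\delta)}g(y)=\int_0^\infty \eta^{(\delta)}_t(s)\,\bigl(P_sg(x)-P_sg(y)\bigr)\,ds,
\]
which is justified by~\eqref{eq:Ptdelta} together with the fact that, for each $s>0$, $P_sg$ is a bounded continuous function so both sides are well defined at every pair of points $x,y$. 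Taking absolute values and using $\eta^{(\delta)}_t\ge 0$ gives
\[
\bigl|P_t^{(\delta)}g(x)-P_t^{(\delta)}g(y)\bigr|\le \int_0^\infty \eta^{(\delta)}_t(s)\,\bigl|P_sg(x)-P_sg(y)\bigr|\,ds.
\]

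Next I would insert the weak Bakry-\'Emery estimate~\eqref{E:wBECD} of Assumption~\ref{A3} pointwise in $s$, obtaining
\[
\bigl|P_t^{(\delta)}g(x)-P_t^{(\delta)}g(y)\bigr|
\le C\,d(x,y)^\kappa\,\|g\|_{L^\infty(X,\mu)}\int_0^\infty \eta^{(\delta)}_t(s)\,s^{-\kappa/d_W}\,ds.
\]
Here the relevant moment has exponent $\alpha=-\kappa/d_W$, and since $\kappa>0$ and $\delta>0$ we have $\alpha<0<\delta$, so~\eqref{moment eta} applies and yields
\[
\int_0^\infty \eta^{(\delta)}_t(s)\,s^{-\kappa/d_W}\,ds=\frac{\Gamma\!\bigl(1+\tfrac{\kappa}{\delta d_W}\bigr)}{\Gamma\!\bigl(1+\tfrac{\kappa}{d_W}\bigr)}\,t^{-\kappa/(\delta d_W)}.
\]
Substituting this back and absorbing the (finite, $t$-independent) Gamma-function ratio into the constant $C$ gives exactly the claimed inequality with the exponent $\kappa/(\delta d_W)$ on $t$.

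There is essentially no serious obstacle here; the only points requiring a word of care are (i) the elementary verification that $\alpha=-\kappa/d_W$ lies in the admissible range $(-\infty,\delta)$ of~\eqref{moment eta}, so that the $s$-integral is genuinely finite and the bound is not vacuous, and (ii) noting that~\eqref{E:wBECD} holds for all $x,y\in X$ (not merely $\mu$-a.e.), so no null-set issues arise when integrating against $\eta^{(\delta)}_t$. One may optionally remark that the power $1/(\delta d_W)$ is the natural scaling exponent for the subordinate heat kernel, consistent with~\eqref{eq:HKE-non-loc}.
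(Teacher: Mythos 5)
Your argument is correct and is exactly the route the paper intends: the lemma is stated as a direct consequence of the subordination formula~\eqref{eq:Ptdelta}, the moment identity~\eqref{moment eta} applied with $\alpha=-\kappa/d_W<\delta$, and the weak Bakry-\'Emery estimate of Assumption~\ref{A3}, which is precisely your computation, Gamma-factor and all. Nothing further is needed.
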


\subsection{Examples}
\subsubsection*{Poisson kernel in $\mathbb{R}^d$}
The most classical example of non-local Dirichlet form is that associated with the fractional Laplacian $(-\Delta)^{1/2}$ . The corresponding heat kernel $q_t\colon\mathbb{R}^d\times\mathbb{R}^d\to[0,\infty)$ is given for any $t>0$ by 
\begin{equation*}\label{E:Poisson_HK}
q_t(x,y)=\frac{\Gamma\big(\frac{d+1}{2}\big)\pi^{-\frac{d+1}{2}}}{t^d}\left(1+\frac{|x-y|^2}{t^2}\right)^{-\frac{d+1}{2}}
\end{equation*}
and provides the fundamental solution to the \textit{Poisson equation} in $\mathbb{R}^d$, $\partial_tf= -\Delta^{1/2}f$.

\subsubsection*{Non-local forms on nested fractals}
The concept of fractional metric measure space was introduced by Barlow in~\cite{Bar98}, to which we refer the reader for a precise definition. Some of these spaces support what is called a fractional diffusion, so that both Assumption~\ref{A1} and Assumption~\ref{sGKHE} are valid. Nested fractals like the Vicsek set and the Sierpinski gasket are examples that fall into this class of spaces and also satisfy the weak Bakry-\'Emery estimate from Assumption~\ref{A3}, see~\cite[Theorem 5.1]{ABCRST3}.

\medskip

The non-local Dirichlet form obtained through subordination from such a fractional diffusion has been studied in the literature by many authors, see e.g.~\cite{Sto00,Kum03,CK03,KL16,Kum04,bogdan02,bogdan03}. In particular, it was proved in~\cite{Sto00,Kum03} that for some values of $\delta$, the associated stable-like process whose corresponding Dirichlet form satisfies~\eqref{E:Edelta} can also be obtained as the trace of a $d$-dimensional Brownian motion on $X$, assuming $X\subset\mathbb{R}^d$.

\medskip
 
Notice that, even though the Sierpinski carpet is a fractional metric measure space that admits a unique natural fractional diffusion~\cite{BB99,BBKT},
 the question of whether this space satisfies the weak Bakry-\'Emery estimate with the correct parameter from Assumption~\ref{A3} remains open~\cite[Conjecture 5.4]{ABCRST3}.

\section{Heat kernel based Besov classes for the fractional Laplacian}\label{S:Besov}
This section does not require the weak Bakry-\'Emery condition and could have therefore been written under more general assumptions. However, we will continue working in the same framework for the ease of the presentation. 
As in \cite{ABCRST1,ABCRST2, ABCRST3}, for any $p \ge 1$ and $\alpha \ge 0$ we consider the Besov seminorm
\begin{equation*}\label{E:def_Besov_norm}
\| f \|_{p,\alpha}= \sup_{t >0} t^{-\alpha} \left( \int_X \int_X |f(x)-f(y) |^p p^{(\delta)}_t (x,y) d\mu(x) d\mu(y) \right)^{1/p}
\end{equation*}
and the associated heat semigroup-based class
\[
\mathbf{B}^{p,\alpha}(X)=\{ f \in L^p(X,\mu)\, :\, \| f \|_{p,\alpha} <\infty \}.
\]
We refer to~\cite{ABCRST1} for an account of some of the basic properties of these spaces. The remainder of this section is devoted to identifying and classifying them depending on the relation between the parameters $p$ and $\alpha$.

\subsection{The case $\alpha <1/p$}

We start by comparing the space $\mathbf{B}^{p,\alpha}(X)$ to the Korevaar-Schoen classes defined in \eqref{E:def_KS} and \eqref{E:def_calKS_norm}. 

\begin{theorem}\label{T:KS-Besov}
Let $1\leq p<\infty$ and $0<\alpha < \frac{1}{p}$. Then,
\begin{equation}\label{E:KS-Besov}
\B^{p,\alpha}(X)=\mathcal{KS}^{\alpha \delta d_W\!, p}(X)=KS^{\alpha \delta d_W\!, p}(X).
\end{equation}
Moreover, $\mathbf{B}^{p,\alpha}(X)$ and $\mathcal{KS}^{\alpha \delta d_W,p}(X)$ have equivalent seminorms.
\end{theorem}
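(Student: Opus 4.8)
The strategy is to sandwich the Besov seminorm $\|f\|_{p,\alpha}$ between the two Korevaar--Schoen seminorms, using the sharp two-sided heat kernel estimate \eqref{eq:HKE-non-loc} together with the Ahlfors regularity (A2). Since $\mathcal{KS}^{\lambda,p}$ uses a $\sup_{r>0}$ while $KS^{\lambda,p}$ uses $\limsup_{r\to 0^+}$, we clearly have $\|f\|_{KS^{\lambda,p}}\le \|f\|_{\mathcal{KS}^{\lambda,p}}$ trivially; the real content is to show $\|f\|_{p,\alpha}\lesssim \|f\|_{\mathcal{KS}^{\alpha\delta d_W,p}}$ and, conversely, $\|f\|_{\mathcal{KS}^{\alpha\delta d_W,p}}\lesssim \|f\|_{p,\alpha}$, which forces all three seminorms to be equivalent and hence all three spaces to coincide. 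Throughout I would write $\lambda=\alpha\delta d_W$ and set $t^{1/(\delta d_W)}\simeq r$, so that the scaling $t^{-\alpha}=t^{-\lambda/(\delta d_W)}$ matches the factor $r^{-\lambda p}$ appearing (after the $p$-th root) in the KS seminorm.

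\textbf{From $\mathcal{KS}$ to Besov (upper bound on $\|f\|_{p,\alpha}$).} Fix $t>0$ and split $X\times X$ into the ``diagonal'' region $d(x,y)\le t^{1/(\delta d_W)}$ and the dyadic annuli $A_k=\{2^{k-1}t^{1/(\delta d_W)}< d(x,y)\le 2^k t^{1/(\delta d_W)}\}$, $k\ge 1$. On each annulus the upper heat kernel bound in \eqref{eq:HKE-non-loc} gives $p_t^{(\delta)}(x,y)\lesssim t^{-d_H/(\delta d_W)}(1+2^{k})^{-d_H-\delta d_W}\simeq t^{-d_H/(\delta d_W)}2^{-k(d_H+\delta d_W)}$. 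Covering each annulus by a Vitali family of balls of radius $r_k\simeq 2^k t^{1/(\delta d_W)}$ and using Ahlfors regularity to control both $\mu(B(x,r_k))\simeq r_k^{d_H}$ and the overlap, one bounds $\int\int_{A_k}|f(x)-f(y)|^p p_t^{(\delta)}\,d\mu\,d\mu$ by a constant times $2^{-k\delta d_W p}\cdot r_k^{\lambda p}\,\|f\|_{\mathcal{KS}^{\lambda,p}}^p/r_k^{\lambda p}$-type expression; after substituting $r_k\simeq 2^k t^{1/(\delta d_W)}$ and $\lambda=\alpha\delta d_W$ the $k$-dependence becomes $2^{-k(\delta d_W - \lambda)p}=2^{-k\delta d_W(1-\alpha)p\cdot(\text{something})}$, which is summable precisely because $\alpha p<1$ — this is where the hypothesis $\alpha<1/p$ is used. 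Multiplying the resulting bound by $t^{-\alpha p}$ yields $\|f\|_{p,\alpha}^p\lesssim \|f\|_{\mathcal{KS}^{\lambda,p}}^p$.

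\textbf{From Besov to $KS$ (lower bound on $\|f\|_{p,\alpha}$).} For the reverse inequality I would use the \emph{lower} bound in \eqref{eq:HKE-non-loc}: on the ball $d(x,y)\le r$ with $t=r^{\delta d_W}$ we get $p_t^{(\delta)}(x,y)\gtrsim t^{-d_H/(\delta d_W)}\simeq r^{-d_H}\simeq \mu(B(x,r))^{-1}$, so that
\begin{equation*}
\int_X\int_{B(x,r)}\frac{|f(x)-f(y)|^p}{r^{\lambda p}\mu(B(x,r))}\,d\mu(y)\,d\mu(x)\lesssim r^{-\lambda p}\int_X\int_X|f(x)-f(y)|^p p_{r^{\delta d_W}}^{(\delta)}(x,y)\,d\mu(y)\,d\mu(x)\le \|f\|_{p,\alpha}^p,
\end{equation*}
since $r^{-\lambda p}=(r^{\delta d_W})^{-\alpha p}=t^{-\alpha p}$. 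Taking $\limsup_{r\to 0^+}$ (indeed $\sup_{r>0}$) gives $\|f\|_{\mathcal{KS}^{\lambda,p}}\lesssim\|f\|_{p,\alpha}$, closing the loop and upgrading the a priori weaker $KS$ membership to the stronger $\mathcal{KS}$ membership.

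\textbf{Main obstacle.} The delicate point is the annular decomposition in the $\mathcal{KS}\Rightarrow$ Besov direction: one must honestly handle the covering of an annulus of radius $\sim 2^k t^{1/(\delta d_W)}$ by balls on whose doubles the KS integrand is controlled, keeping the overlap constant uniform in $k$ (which Ahlfors regularity and a Vitali/covering argument supply), and then verify that the geometric series in $k$ converges — the convergence threshold is exactly $\alpha p<1$, matching the hypothesis. One also needs the elementary remark that under (A2) the KS seminorms with the measure-normalized denominator $r^{\lambda p}\mu(B(x,r))$ and with $r^{\lambda p + d_H}$ are comparable, so that the exponents line up cleanly; this and the $L^p$-membership of $f$ (needed so that the diagonal term and the tails are finite) are routine.
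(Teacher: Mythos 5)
Your two-sided comparison between $\|\cdot\|_{p,\alpha}$ and the $\mathcal{KS}^{\alpha\delta d_W,p}$ seminorm is essentially the paper's argument. The lower bound direction (take $t=r^{\delta d_W}$, use the lower heat kernel estimate on $\{d(x,y)\le t^{1/\delta d_W}\}$) is exactly the paper's proof of $\mathbf{B}^{p,\alpha}(X)\subseteq\mathcal{KS}^{\alpha\delta d_W,p}(X)$. For the converse the paper splits $X\times X$ at a fixed distance $r$, bounds the far part $A(t,r)$ by $\|f\|_{L^p}$ times a tail integral and the near part $B(t,r)$ by dyadic radii $r_k=2^{-k}r$ against the $\mathcal{KS}$ seminorm, then lets $r\to\infty$; you instead sum annuli of radii $2^k t^{1/\delta d_W}$ out to infinity directly against the sup-type seminorm, which is legitimate (the sup in $\mathcal{KS}$ controls every scale) and even a bit more direct. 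No Vitali covering is needed: each annulus sits inside $B(x,2^k t^{1/\delta d_W})$ and Ahlfors regularity gives $\mu(B(x,\rho))\simeq\rho^{d_H}$ uniformly. One bookkeeping slip: the geometric factor is $2^{-k(\delta d_W-\lambda p)}=2^{-k\delta d_W(1-\alpha p)}$, not $2^{-k(\delta d_W-\lambda)p}$; you do, however, identify the correct summability threshold $\alpha p<1$, which is where the hypothesis enters in the paper as well.

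The genuine gap concerns the third space. Your two estimates give $\mathbf{B}^{p,\alpha}(X)=\mathcal{KS}^{\alpha\delta d_W,p}(X)$ with equivalent seminorms, and trivially $\mathcal{KS}^{\alpha\delta d_W,p}(X)\subseteq KS^{\alpha\delta d_W,p}(X)$ (a sup dominates a limsup), but the assertion that this "forces all three spaces to coincide" is not a valid deduction: nothing in your argument shows $KS^{\alpha\delta d_W,p}(X)\subseteq\mathcal{KS}^{\alpha\delta d_W,p}(X)$, i.e.\ that finiteness of the $\limsup_{r\to 0^+}$ together with $f\in L^p(X,\mu)$ controls the supremum over \emph{all} $r>0$ (note the theorem does not claim the $KS$ seminorm is equivalent to the other two, only that the spaces agree). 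The missing step is short but must be stated: for $r\ge r_0$, using $|f(x)-f(y)|^p\le 2^{p-1}(|f(x)|^p+|f(y)|^p)$, Fubini and Ahlfors regularity (so that $\mu(B(x,r))\simeq\mu(B(y,r))\simeq r^{d_H}$ when $d(x,y)<r$), the normalized double integral at scale $r$ is bounded by $C r_0^{-\alpha\delta d_W p}\|f\|_{L^p(X,\mu)}^p$, while finiteness of the limsup bounds it for all sufficiently small $r$; hence the sup over $r>0$ is finite. This is precisely what the paper imports verbatim from \cite[Proposition 4.1]{ABCRST3}. With that inclusion added (or proved as above), your argument is complete.
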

\begin{proof}
The inclusion $\mathcal{KS}^{\alpha \delta d_W\!, p}(X)\subseteq KS^{\alpha \delta d_W\!, p}(X)$ follows directly from the definition, while $KS^{\alpha \delta d_W\!, p}(X)\subseteq \mathcal{KS}^{\alpha \delta d_W\!, p}(X)$ is obtained verbatim to~\cite[Proposition 4.1]{ABCRST3}. We now proceed to prove $\B^{p,\alpha}(X)=\mathcal{KS}^{\alpha \delta d_W\!, p}(X)$.

Let us write $\Phi(s)= (1+c_4 s)^{-d_H-\delta d_W}$, so the heat kernel estimate~\eqref{eq:HKE-non-loc} implies $p^{(\delta)}_t (x,y)\geq\Phi(1)t^{\frac{-d_H}{\delta d_W}}$ on $B(y,t^{\frac{1}{\delta d_W}})$. Then,
\begin{align*}
\lefteqn{\frac{1}{t^{\alpha p}}\int_X \int_X |f(x)-f(y) |^p p^{(\delta)}_t (x,y) d\mu(x) d\mu(y)}\quad&\\
&\geq  \frac{1}{t^{\alpha p}} \int_X \int_{B(y,t^{\frac{1}{\delta d_W}})} |f(x)-f(y) |^p p^{(\delta)}_t (x,y) d\mu(x) d\mu(y)\\
&\geq \Phi(1) \int_X \int_{B(y,t^{\frac{1}{\delta d_W}})}  \frac{|f(x)-f(y) |^p}{t^{\alpha p+\frac{d_H}{\delta d_W}}} d\mu(x) d\mu(y),
\end{align*}
and taking the supremum over $t>0$ yields $\mathbf{B}^{p,\alpha}(X)\subseteq\mathcal{KS}^{\alpha\delta d_W\!,p}(X)$.
For the upper bound, fix $r>0$ and set
\begin{align}
A(t,r)&:=\int_X\int_{X\setminus B(y,r)}|f(x)-f(y)|^{p}p^{(\delta)}_{t}(x,y)\,d\mu(x)\,d\mu(y),\label{E:A(t)}\\
B(t,r)&:=\int_X\int_{B(y,r)}|f(x)-f(y)|^{p}p^{(\delta)}_{t}(x,y)\,d\mu(x)\,d\mu(y).\label{E:B(t)}
\end{align}
From the proof of~\cite[Theorem 3.1]{Gri03} we know that~\eqref{eq:HKE-non-loc} implies
\[
\int_{X\setminus B(y,r)}p^{(\delta)}_t(x,y)\,d\mu(x)\leq C\int_{\frac{1}{2}rt^{-\frac{1}{\delta d_W}}}^\infty s^{d_H}\Phi(s)\frac{ds}{s}.
\]
Applying the inequality
$|f(x)-f(y)|^{p}\leq 2^{p-1}(|f(x)|^{p}+|f(y)|^{p})$, the Fubini theorem and the preceding inequality we obtain
\begin{align}
A(t,r)&\leq 2^p\int_X\int_{X\setminus B(y,r)}|f(y)|^pp^{(\delta)}_t(x,y)\,d\mu(y)\,d\mu(x)\notag\\
&\leq 2^pC\|f\|^p_{L^p(X,\mu)}\int_{\frac{1}{2}rt^{-\frac{1}{\delta d_W}}}^\infty s^{d_H}\Phi(s)\frac{ds}{s}\notag\\
&\leq C t^{p\alpha} r^{-\alpha\delta d_W p} \|f\|^p_{L^p(X,\mu)}\int_{\frac{1}{2}rt^{-\frac{1}{\delta d_W}}}^\infty s^{d_H+\alpha\delta d_W p}\Phi(s)\frac{ds}{s}.
\label{eq:HKBesov-norms-upper-proof1}
\end{align}
On the other hand, for $B(t,r)$, writing $r_k=2^{-k}r$ we have by virtue of~\eqref{eq:HKE-non-loc} that
\begin{align}
B(t,r)
&\leq C t^{-\frac{d_H}{\delta d_W}} \sum_{k=0}^\infty\Phi\Bigl(r_{k+1}t^{-\frac{1}{\delta d_W}}\Bigr) r_k^{d_H+\alpha\delta d_W p}\int_X\int_{B(x,r_k)}\frac{|f(x)-f(y)|^p}{r_k^{\alpha\delta d_Wp+d_H}} \,d\mu(x)\,d\mu(y)\notag\\
&\leq Ct^{\alpha p}\|f\|_{\mathcal{KS}^{\alpha\delta d_W,p}(X)}^p\int_0^\infty s^{d_H+\alpha\delta d_W p}\Phi(s)\frac{ds}{s}.\label{eq:HKBesov-norms-upper-proof2}
\end{align}
The integrals in both~\eqref{eq:HKBesov-norms-upper-proof1} and~\eqref{eq:HKBesov-norms-upper-proof2} are  bounded because $\alpha<1/p$ by assumption, see e.g.~\cite[Definition 2.3]{Gri03}, so the bound on $A(t,r)+B(t,r)$ yields
\[
\frac{1}{t^{\alpha p}}\int_X \int_X |f(x)-f(y) |^p p_t^{(\delta)} (x,y) d\mu(x)\, d\mu(y)\leq C_{p,\alpha} \Big(\frac{1}{r^{d_W\alpha\delta p}} \|f\|^p_{L^{p}(X,\mu)}+\|f\|_{\mathcal{KS}^{\alpha\delta d_W,p}(X)}^p \Big)
\]
for some $C_{p,\alpha}>0$ and any $t>0$. Taking the supremum over $t>0$ we obtain $\mathcal{KS}^{\alpha\delta d_W\!,p}(X)\subseteq\mathbf{B}^{p,\alpha}(X)$ and letting $r\to\infty$ gives the equivalence of the seminorms.
\end{proof}

\subsection{The case $\alpha = 1/p$}
When $\alpha = 1/p$, the space $\mathbf{B}^{p,\alpha}(X)$ does not compare to a Korevaar-Schoen class. 
Instead, it will coincide with the \textit{fractional Sobolev space} defined as
\begin{equation*}\label{E:def_W_space}
\mathcal{W}^{\lambda,p}(X):=\left\{ f \in L^p(X,\mu)\,\colon\, W_{\lambda,p}(f) <+\infty \right\},
\end{equation*}
where
\begin{equation*}\label{E:def_W_norm}
W_{\lambda, p}(f):=\Big(\int_X\int_X\frac{|f(x)-f(y)|^p}{d(x,y)^{d_H+\lambda p}}\,d\mu(y)d\mu(x)\Big)^{1/p}.
\end{equation*}

\begin{proposition}\label{P:W_norm}
Let $1\leq p<\infty$. Then
$\B^{p,1/p}(X)=\mathcal{W}^{ \delta d_W/p,p}(X)$ 
with equivalent norms.
\end{proposition}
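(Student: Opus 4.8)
The plan is to prove the two-sided inequality $W_{\delta d_W/p,p}(f)^p \simeq \|f\|_{p,1/p}^p$ by comparing the heat-kernel double integral against the Gagliardo-type double integral, using the sharp two-sided bounds~\eqref{eq:HKE-non-loc} on $p_t^{(\delta)}$. Writing $\Phi(s)=(1+c_4 s)^{-d_H-\delta d_W}$ as in the proof of Theorem~\ref{T:KS-Besov}, the key point is that at the critical exponent $\alpha = 1/p$ the integral $\int_0^\infty s^{d_H + \delta d_W}\Phi(s)\,\frac{ds}{s}$ \emph{diverges} (logarithmically, at the upper end), so one cannot simply bound one quantity by the other after summing dyadic annuli; instead one has to track the scale $t$ carefully and exploit that $d(x,y)^{d_H+\delta d_W}$ is exactly the borderline weight.

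First I would prove $\mathcal{W}^{\delta d_W/p,p}(X) \subseteq \mathbf{B}^{p,1/p}(X)$. Fix $t>0$ and split the inner integral over the dyadic annuli $B(y,2^{-k+1}t^{1/(\delta d_W)})\setminus B(y,2^{-k}t^{1/(\delta d_W)})$ for $k\le 0$ together with the ball $B(y, t^{1/(\delta d_W)})$; on each annulus the upper bound in~\eqref{eq:HKE-non-loc} gives $p_t^{(\delta)}(x,y)\le c_3 t^{-d_H/(\delta d_W)}\Phi(2^{-k})$, and one compares $t^{-d_H/(\delta d_W)}\Phi(2^{-k})$ with $d(x,y)^{-d_H-\delta d_W}$ on that annulus up to a constant times $(2^{-k}t^{1/(\delta d_W)})^{\delta d_W}$. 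Summing the resulting geometric-type series in $k$ (which now converges because each term carries a factor $2^{-k\delta d_W}$ against $\Phi(2^{-k})\sim 2^{k(d_H+\delta d_W)}$, i.e.\ net decay $2^{k d_H}$ as $k\to-\infty$ — wait, one must be careful about which direction the annuli go, so I would instead sum over $r_k = 2^{-k}r$ with $r = t^{1/(\delta d_W)}$ and $k\ge 0$ for the near part and a separate tail estimate for $d(x,y)\gtrsim t^{1/(\delta d_W)}$) one gets $t^{-1}\int_X\int_X |f(x)-f(y)|^p p_t^{(\delta)}(x,y)\,d\mu\,d\mu \le C\, W_{\delta d_W/p,p}(f)^p$ uniformly in $t$, hence the inclusion with control of seminorms.

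For the reverse inclusion $\mathbf{B}^{p,1/p}(X)\subseteq\mathcal{W}^{\delta d_W/p,p}(X)$, I would use the lower bound in~\eqref{eq:HKE-non-loc}: on $B(y,t^{1/(\delta d_W)})$ one has $p_t^{(\delta)}(x,y)\ge c_5\Phi(c_6)\,t^{-d_H/(\delta d_W)}\ge c\, d(x,y)^{-d_H-\delta d_W}\cdot (d(x,y)/t^{1/(\delta d_W)})^{d_H+\delta d_W}$, but more directly, for $x\in B(y, 2 t^{1/(\delta d_W)})\setminus B(y, t^{1/(\delta d_W)})$ the lower bound gives $p_t^{(\delta)}(x,y) \gtrsim t^{-d_H/(\delta d_W)} \gtrsim d(x,y)^{-d_H-\delta d_W}$ since $d(x,y)\simeq t^{1/(\delta d_W)}$ there. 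Therefore
\begin{equation*}
\int_X\int_{B(y,2t^{1/(\delta d_W)})\setminus B(y,t^{1/(\delta d_W)})}\frac{|f(x)-f(y)|^p}{d(x,y)^{d_H+\delta d_W}}\,d\mu(x)\,d\mu(y)\le \frac{C}{t}\int_X\int_X |f(x)-f(y)|^p p_t^{(\delta)}(x,y)\,d\mu(x)\,d\mu(y)\le C\|f\|_{p,1/p}^p.
\end{equation*}
Writing $W_{\delta d_W/p,p}(f)^p$ as the sum over $j\in\mathbb{Z}$ of the integrals over the annuli $d(x,y)\in[2^j,2^{j+1})$ and choosing $t = t_j$ with $t_j^{1/(\delta d_W)}=2^j$ in the above, each annular piece is bounded by $C\|f\|_{p,1/p}^p$ — but since there are infinitely many annuli this is not yet enough, so instead I would integrate: average the displayed inequality against $dt/t$ over $t\in(1,\infty)$ (or over all of $(0,\infty)$) and use Fubini together with the fact that for fixed $x,y$ the set of $t$ with $t^{1/(\delta d_W)}\le d(x,y)\le 2t^{1/(\delta d_W)}$ has $\int dt/t$ equal to a fixed constant $\log 2\cdot\delta d_W$; this converts the family of annular bounds into the single bound $W_{\delta d_W/p,p}(f)^p \le C\|f\|_{p,1/p}^p$.

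The main obstacle is precisely this borderline/divergence phenomenon at $\alpha = 1/p$: unlike the subcritical case of Theorem~\ref{T:KS-Besov}, a crude dyadic summation produces a divergent integral $\int_0^\infty s^{d_H+\delta d_W}\Phi(s)\,ds/s = \infty$, so both inclusions must be organized so that the scale $t$ is genuinely used — in the forward direction by comparing the heat kernel weight to the Gagliardo weight \emph{within} each scale before summing, and in the reverse direction by the $dt/t$-averaging (Fubini) trick that trades the single divergent $t$-integral for a convergent fixed constant per pair $(x,y)$. One should also check the elementary integrability of $W_{\delta d_W/p,p}(f)$ near the diagonal is never an issue because the weight $d(x,y)^{-d_H-\delta d_W}$ is integrated only away from $d(x,y)=0$ up to the $L^p$ term, exactly as in the estimate of $A(t,r)$ in the proof of Theorem~\ref{T:KS-Besov}.
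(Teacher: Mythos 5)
Your first inclusion ($\mathcal{W}^{\delta d_W/p,p}(X)\subseteq\mathbf{B}^{p,1/p}(X)$) is workable, but the reverse inclusion as you propose it has a genuine gap at the averaging step. After restricting the lower heat kernel bound to the annulus $t^{1/(\delta d_W)}\le d(x,y)<2t^{1/(\delta d_W)}$, you integrate the resulting family of inequalities against $dt/t$. Fubini does produce the fixed constant $\delta d_W\log 2$ per pair $(x,y)$, but that constant appears on the \emph{left} (Gagliardo) side; on the right you are left with $\int_0^\infty\bigl(t^{-1}\int_X\int_X|f(x)-f(y)|^p\,p_t^{(\delta)}(x,y)\,d\mu\,d\mu\bigr)\frac{dt}{t}$, and the only control you have on the integrand is its supremum over $t$, namely $\|f\|_{p,1/p}^p$. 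Since $\int_0^1 dt/t=\infty$, this gives nothing for the near-diagonal part of $W_{\delta d_W/p,p}(f)$, which is exactly the part that matters; averaging over $t\in(1,\infty)$ only reaches pairs with $d(x,y)\gtrsim 1$, and those are trivially handled by the $L^p$ norm anyway. Finiteness of that $dt/t$-integral is (up to constants) membership in Grigor'yan's space $\mathfrak{B}^{\delta d_W/p}_{p,p}(X)$, i.e.\ essentially the statement being proved, so it cannot be invoked.

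The repair — and the paper's actual argument — is to keep the full lower bound rather than a single annulus per time. Rewriting \eqref{eq:HKE-non-loc} gives the pointwise two-sided comparison $t^{-1}p_t^{(\delta)}(x,y)\simeq\bigl(t^{1/(\delta d_W)}+d(x,y)\bigr)^{-d_H-\delta d_W}$ for all $x,y$ and $t>0$. The upper half makes your first inclusion immediate with no dyadic decomposition at all: dropping $t^{1/(\delta d_W)}$ bounds $t^{-1}p_t^{(\delta)}(x,y)$ by $C\,d(x,y)^{-d_H-\delta d_W}$ uniformly in $t$, so the borderline divergence you worried about never arises. The lower half gives, for each fixed $t$, the single inequality $\int_X\int_X\frac{|f(x)-f(y)|^p}{(t^{1/(\delta d_W)}+d(x,y))^{d_H+\delta d_W}}\,d\mu\,d\mu\le C\|f\|_{p,1/p}^p$ over all of $X\times X$; letting $t\to0^+$ and applying Fatou's lemma (the integrand increases to the Gagliardo integrand) yields $W_{\delta d_W/p,p}(f)\le C\|f\|_{p,1/p}$. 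In short: the critical case is handled not by $dt/t$-averaging of annular estimates — which a sup-in-$t$ norm cannot support — but by the $t$-regularized kernel estimate at each fixed $t$ followed by a limiting argument.
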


\begin{proof}
Observe from~\eqref{eq:HKE-non-loc} that 
\begin{equation*}
	c_5\big( t^{\frac{d_H}{\delta d_W}} + c_6 d(x,y)\bigr)^{-d_H-\delta d_W}
	\leq  \frac1t p^{(\delta)}_t(x,y)
	\leq c_3\big( t^{\frac{d_H}{\delta d_W}} + c_4 d(x,y)\bigr)^{-d_H-\delta d_W}.
	\end{equation*}
The upper bound gives
\begin{equation}\label{E:W_norm_01}
\frac{1}{t}\int_X\int_X |f(x)-f(y)|^pp^{(\delta)}_t(x,y)\,d\mu(x)\,d\mu(y)
\leq C\int_X\int_X\frac{|f(x)-f(y)|^p}{d(x,y)^{d_H+\delta d_W}}d\mu(y)\,d\mu(x)=CW_{p,\delta d_W/p}^p(f),
\end{equation}
from which $\|f\|_{p,1/p}\leq CW_{p,\delta d_W/p}^p(f)$. 
The lower bound is
\begin{equation}\label{E:W_norm_02}
\int_X\int_X\frac{|f(x)-f(y)|^p}{(t^{\frac{1}{\delta d_W}}+d(x,y))^{d_H+\delta d_W}}d\mu(y)\,d\mu(x)
\leq \frac{C}{t}\int_X\int_X|f(x)-f(y)|^pp^{(\delta)}_t(x,y)\,d\mu(x)\,d\mu(y)\leq C \|f\|_{p,1/p}^p.
\end{equation}
Taking $\liminf_{t\to 0^+}$ and applying the Fatou lemma we obtain $W_{p,\delta d_W/p}(f)\leq C\|f\|_{p,1/p}$ as desired.
\end{proof}

In the course of the proof we established the following locality-in-time estimate for the Besov norm which will be useful later.
\begin{corollary}\label{cor-Pprop}
Let $1\leq p<\infty$. There exists $C>0$ such that for any $f\in \B^{p,1/p}(X)$
\begin{equation*}
\|f\|_{p,1/p}\leq C \liminf_{t\to0^+} \frac1{t^{1/p}} \int_X\int_X|f(x)-f(y)|^pp^{(\delta)}_t(x,y)\,d\mu(x)\,d\mu(y).
\end{equation*}
\end{corollary}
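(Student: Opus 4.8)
The plan is to revisit the lower bound extracted during the proof of Proposition~\ref{P:W_norm} and to pass to the limit $t\to0^+$ by monotone convergence. Recall that the lower heat kernel estimate in~\eqref{eq:HKE-non-loc} produced, for every $t>0$, the inequality~\eqref{E:W_norm_02}, namely
\[
\int_X\int_X\frac{|f(x)-f(y)|^p}{(t^{\frac{1}{\delta d_W}}+d(x,y))^{d_H+\delta d_W}}\,d\mu(y)\,d\mu(x)\leq \frac{C}{t}\int_X\int_X|f(x)-f(y)|^pp^{(\delta)}_t(x,y)\,d\mu(x)\,d\mu(y).
\]
The left-hand side is the object to exploit: as $t$ decreases to $0$, the kernel $(t^{\frac{1}{\delta d_W}}+d(x,y))^{-d_H-\delta d_W}$ increases pointwise and monotonically to $d(x,y)^{-d_H-\delta d_W}$, so by the monotone convergence theorem the left-hand side increases to $W_{p,\delta d_W/p}^p(f)$ as $t\to0^+$; in particular it has a genuine limit, not merely a limit inferior.

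Concretely, first I would fix a sequence $t_n\downarrow 0$ realizing the limit inferior of the right-hand side, apply the displayed inequality along $\{t_n\}$, and use the monotone limit on the left to obtain
\[
W_{p,\delta d_W/p}^p(f)\leq C\,\liminf_{t\to 0^+}\frac1t\int_X\int_X|f(x)-f(y)|^pp^{(\delta)}_t(x,y)\,d\mu(x)\,d\mu(y).
\]
(Equivalently, one can simply take $\liminf_{t\to0^+}$ on both sides, since the left side converges.) Combining this with the norm comparison $\|f\|_{p,1/p}\leq C\,W_{p,\delta d_W/p}(f)$ furnished by Proposition~\ref{P:W_norm} and taking $p$-th roots yields the asserted estimate for $\|f\|_{p,1/p}$.

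As for difficulty, there is essentially no new obstacle: the analytic content is already contained in the two-sided heat kernel estimate and the computation behind~\eqref{E:W_norm_02}, and the only point requiring a moment of care is the interchange of $\liminf_{t\to0^+}$ with the spatial double integral. This is painless precisely because the regularized kernel is monotone in $t$, so monotone convergence (rather than merely Fatou) applies and the left-hand side in fact has a true limit. I would stress, however, that the statement is not vacuous: $\|f\|_{p,1/p}$ is \emph{a priori} a supremum over all $t>0$, and the content of the corollary is that the two-sided bound forces this supremum to be controlled by the small-time behaviour alone — a locality-in-time phenomenon to be used later.
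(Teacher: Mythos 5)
Your argument is correct and is essentially the paper's own: the corollary is extracted from the lower bound~\eqref{E:W_norm_02} in the proof of Proposition~\ref{P:W_norm}, passing to the limit $t\to0^+$ (the paper uses Fatou, you use monotone convergence of the regularized kernel, which is an immaterial variant) and then invoking the comparison $\|f\|_{p,1/p}\leq C\,W_{\delta d_W/p,p}(f)$. The only caveat is a homogeneity typo in the stated corollary (the double integral on the right should carry a $1/p$ power, as your p-th-root step in fact produces), which is an issue with the paper's statement rather than with your proof.
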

This condition was previously considered in~\cite[Definition~6.7]{ABCRST1}, where it was called property $(P_{p,1/p})$.

\begin{remark}
In the case $X=\mathbb{R}^d$, we have $d_W=2$ and the fractional Sobolev space $\mathcal{W}^{2\delta/p,p}(X)$ coincides with the usual Euclidean fractional Sobolev space from~\cite{Gag57}, which is defined as
\begin{equation*}
W^{2\delta/p,p}(\mathbb{R}^d):=\bigg\{f\in L^p(\mathbb{R}^d,dx)\;\colon\; \bigg(\int_{\mathbb{R}^d}\int_{\mathbb{R}^d}\frac{|f(x)-f(y)|^p}{|x-y|^{d+2\delta}}dy\,dx\bigg)^{1/p}<\infty\bigg\}.
\end{equation*}
\end{remark}

\subsection{The case $\alpha >1/p$}
The spaces $\B^{p,\alpha}(X)$ for $\alpha>1/p$ are trivial and thus not interesting for further analysis. This completes our classification of these spaces.

\begin{proposition}\label{trivial Lp}
Let $1\leq p<\infty$ and $\alpha >1/p$. Then, $\B^{p,\alpha}(X)$ only contains the zero function.
\end{proposition}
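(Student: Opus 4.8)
The plan is to show that if $f\in\mathbf{B}^{p,\alpha}(X)$ with $\alpha>1/p$, then $f$ is constant $\mu$-a.e., and since constants are in $L^p$ only when $\mu(X)<\infty$ — but more to the point, the lower heat kernel bound forces even that constant to vanish once we integrate over an unbounded range of scales. The key is the lower bound in~\eqref{eq:HKE-non-loc}, which I already used in Theorem~\ref{T:KS-Besov}: on the ball $B(y,t^{1/(\delta d_W)})$ one has $p^{(\delta)}_t(x,y)\geq \Phi(1)t^{-d_H/(\delta d_W)}$. Hence for every $t>0$,
\begin{equation*}
\|f\|_{p,\alpha}^p \;\geq\; \Phi(1)\, t^{-\alpha p}\int_X\int_{B(y,t^{1/(\delta d_W)})}\frac{|f(x)-f(y)|^p}{t^{d_H/(\delta d_W)}}\,d\mu(x)\,d\mu(y).
\end{equation*}
Writing $r=t^{1/(\delta d_W)}$, this says $\sup_{r>0} r^{-\alpha\delta d_W p}\int_X\int_{B(y,r)}\mu(B(y,r))^{-1}|f(x)-f(y)|^p\,d\mu(x)\,d\mu(y)<\infty$ (using Ahlfors regularity to replace $t^{d_H/(\delta d_W)}\simeq r^{d_H}\simeq\mu(B(y,r))$); that is, $f\in\mathcal{KS}^{\alpha\delta d_W,p}(X)$ with $\lambda:=\alpha\delta d_W$. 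But the exponent condition $\alpha>1/p$ is \emph{not} by itself enough to make $\mathcal{KS}^{\lambda,p}$ trivial in a general Ahlfors space; what saves us is that the supremum in~\eqref{E:def_calKS_norm} is over \emph{all} $r>0$, including large $r$.

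\textbf{Two regimes.} First I would dispose of the case $\mu(X)=\infty$: letting $r\to\infty$, the inner ball eventually is all of $X$, so the finiteness of $\sup_r r^{-\lambda p}\int_X\int_X\mu(B(y,r))^{-1}|f(x)-f(y)|^p\,d\mu(x)\,d\mu(y)$ together with $\mu(B(y,r))\simeq r^{d_H}\to\infty$ and $r^{-\lambda p}\to 0$ forces $\int_X\int_X|f(x)-f(y)|^p\,d\mu(x)\,d\mu(y)=0$ unless that double integral is itself infinite — and it is finite because $f\in L^p$ (expand via $|f(x)-f(y)|^p\leq 2^{p-1}(|f(x)|^p+|f(y)|^p)$, which needs $\mu(X)<\infty$, contradiction). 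So we conclude $\|f\|_{L^p}=0$, i.e. $f=0$. Second, the case $\mu(X)<\infty$: here Ahlfors $d_H$-regularity with $\mu(X)<\infty$ forces $X$ to be bounded, say $X=B(y,R_0)$ for all $y$ and $R\geq R_0$. Then for $r\geq R_0$ the inner integral stabilizes and, by the same computation, finiteness of $\sup_r r^{-\lambda p}\mu(X)^{-1}\int_X\int_X|f(x)-f(y)|^p\,d\mu(x)\,d\mu(y)$ with $r^{-\lambda p}\to 0$ gives $\int_X\int_X|f(x)-f(y)|^p\,d\mu(x)\,d\mu(y)=0$, hence $f$ is constant a.e. It remains to rule out nonzero constants; but a nonzero constant $c$ gives $|f(x)-f(y)|=0$, so the seminorm vanishes and $c\in\mathbf{B}^{p,\alpha}(X)$ — so strictly speaking nonzero constants survive when $\mu(X)<\infty$. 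I would therefore check the statement's intended hypothesis: in this paper $X$ is locally compact and the natural reading (consistent with Proposition~\ref{trivial Lp} being stated without caveat) is that either $\mu(X)=\infty$, or the authors regard the ambient space as connected and unbounded; in the genuinely compact case one should add "modulo constants." I would state the result as: $\mathbf{B}^{p,\alpha}(X)$ contains only constants, and only the zero function when $\mu(X)=\infty$.

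\textbf{Main obstacle.} The real subtlety — and the step I expect to need the most care — is making the "$r\to\infty$" argument rigorous: I must justify interchanging the $r\to\infty$ limit with the double integral (dominated convergence, using $|f(x)-f(y)|^p\leq 2^{p-1}(|f(x)|^p+|f(y)|^p)\in L^1(\mu\times\mu)$ when $\mu(X)<\infty$, and a separate direct argument when $\mu(X)=\infty$ where one instead shows the integrand $\mathbbm{1}_{B(y,r)}(x)\to 1$ pointwise and uses Fatou to get a lower bound that must be $0$). The cleanest route is probably: fix $y_0$, note $\mu(B(y_0,r))\leq c_2 r^{d_H}$, so
\begin{equation*}
\|f\|_{p,\alpha}^p \;\geq\; \Phi(1)\,c_2^{-1}\, r^{-\alpha\delta d_W p - d_H}\int_{B(y_0,r/2)}\int_{B(y_0,r/2)}|f(x)-f(y)|^p\,d\mu(x)\,d\mu(y),
\end{equation*}
since $B(y,r/2)\supseteq$ nothing useful directly — rather, for $x,y\in B(y_0,r/2)$ we have $x\in B(y,r)$, so the restricted double integral is a lower bound for the $\mathcal{KS}$-type expression. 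As $r\to\infty$ the inner double integral increases to $\int_X\int_X|f(x)-f(y)|^p\,d\mu(x)\,d\mu(y)\in(0,\infty]$ whenever $f$ is non-constant, while the prefactor $r^{-\alpha\delta d_W p - d_H}\to 0$; this contradicts finiteness of $\|f\|_{p,\alpha}$ unless $f$ is constant. Combined with the $L^p$-membership observation to kill the constant when $\mu(X)=\infty$, this completes the proof.
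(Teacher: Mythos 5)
There is a genuine gap: the ``large $r$'' step at the heart of your argument produces no contradiction. From your own displayed inequality, the right-hand side is $r^{-\alpha\delta d_W p-d_H}$ times a double integral which, for any $f\in L^p(X,\mu)$, is at most $2^{p-1}\cdot 2\,\mu\bigl(B(y_0,r/2)\bigr)\|f\|^p_{L^p(X,\mu)}\lesssim r^{d_H}\|f\|^p_{L^p(X,\mu)}$ by Ahlfors regularity; hence the whole lower bound is $\lesssim r^{-\alpha\delta d_W p}\|f\|^p_{L^p(X,\mu)}\to 0$ as $r\to\infty$. More generally, the quantity inside the supremum in \eqref{E:def_calKS_norm} is bounded by $C r^{-\alpha\delta d_W p}\|f\|^p_{L^p(X,\mu)}$ for every $f\in L^p$, so the supremum over large $r$ carries no information beyond $L^p$-membership, and the (correct) inclusion $\B^{p,\alpha}(X)\subseteq\mathcal{KS}^{\alpha\delta d_W,p}(X)$ cannot give triviality: for small $\delta$ the exponent $\alpha\delta d_W$ can lie far below any critical Korevaar--Schoen exponent, and those spaces contain plenty of non-constant functions. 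The point you are missing is that triviality for $\alpha>1/p$ is a \emph{small-time} phenomenon driven by the heavy polynomial tails of $p^{(\delta)}_t$, which your near-diagonal lower bound $p^{(\delta)}_t(x,y)\geq\Phi(1)t^{-d_H/\delta d_W}$ on $B(y,t^{1/\delta d_W})$ discards; it is not a large-scale phenomenon, and rate (not mere positivity of the limiting double integral) is what matters.

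The paper's proof uses the full lower bound in \eqref{eq:HKE-non-loc}, namely $p^{(\delta)}_t(x,y)\geq c\,t\,\bigl(t^{1/\delta d_W}+d(x,y)\bigr)^{-d_H-\delta d_W}$ for \emph{all} $x,y$, exactly as in estimate \eqref{E:W_norm_02} from the proof of Proposition~\ref{P:W_norm}. This gives
\begin{equation*}
\int_X\int_X\frac{|f(x)-f(y)|^p}{\bigl(t^{\frac{1}{\delta d_W}}+d(x,y)\bigr)^{d_H+\delta d_W}}\,d\mu(y)\,d\mu(x)\leq C\,t^{\alpha p-1}\,\|f\|^p_{p,\alpha},
\end{equation*}
and since $\alpha p>1$ the right-hand side tends to $0$ as $t\to 0^+$; Fatou's lemma then forces the Gagliardo-type seminorm $W_{\delta d_W/p,\,p}(f)$ to vanish, so $f$ is constant $\mu$-a.e.\ and hence zero because $f\in L^p$. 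Your hedging about ``modulo constants'' is also unnecessary: Assumption~\ref{A1} requires $\mu(B(x,r))\geq c_1 r^{d_H}$ for all $r\geq 0$, so $\mu(X)=\infty$ and no nonzero constant belongs to $L^p(X,\mu)$; the proposition as stated needs no caveat. To repair your write-up, replace the large-$r$ argument by the $t\to 0^+$ argument above.
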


\begin{proof}
The estimate~\eqref{E:W_norm_02} gives 
\begin{equation*}
\int_X\int_X\frac{|f(x)-f(y)|^p}{(t^{\frac{1}{\delta d_W}}+d(x,y))^{d_H+\delta d_W}}d\mu(y)\,d\mu(x) 
\leq  C t^{\alpha p-1} \| f \|_{p,\alpha}.
\end{equation*}
Applying Fatou's lemma to the $\liminf_{t\to0^+}$ we conclude from $\alpha p>1$ that 
\[
\int_X\int_X\frac{|f(x)-f(y)|^p}{d(x,y)^{d_H+\delta d_W}}d\mu(y)\,d\mu(x) =0
\]
which implies that $f$ is constant and thus zero since $f$ is in $L^p$.
\end{proof}

\subsection{Comparison to Besov metric spaces previously considered in the literature}

One can also compare the space $\mathbf{B}^{p,\alpha}(X)$ to Besov type metric spaces previously 
considered by Grigor'yan in~\cite{Gri03}. 
To do so, we define,
for any $f\in L^{p}(X,\mu)$ and $r>0$, 
\begin{equation}\label{eq:Besov-seminorm-r}
N^{\alpha}_{p}(f,r):=\frac{1}{r^{\alpha+d_{H}/p}}\biggl(\iint_{\{d(x,y)<r\}}|f(x)-f(y)|^{p}\,d\mu(x)\,d\mu(y)\biggr)^{1/p}.
\end{equation}
Furthermore, for any $\max\{1,p\}\leq q<\infty$ we set
\begin{equation}\label{eq:Besov-seminorm-pq}
N_{p,q}^{\alpha}(f):=\brak{\int_0^\infty \brak{N_p^{\alpha}(f,r)}^{q}\, \frac{dr}{r}}^{1/q},
\end{equation}
and for $q=\infty$ define
\begin{equation}\label{eq:Besov-seminorm}
N^{\alpha}_{p,\infty}(f):=\sup_{r>0}N^{\alpha}_{p}(f,r).
\end{equation}
The Besov metric space $\mathfrak{B}^{\alpha}_{p,q}(X)$, see~\cite{Gri03}, is defined as
\begin{equation}\label{eq:Besov}
\mathfrak{B}^{\alpha}_{p,q}(X):=\bigl\{f\in L^{p}(X,\mu)\,\colon\, N_{p,q}^{\alpha}(f)<\infty\bigr\}.
\end{equation}

With these notations, it is not difficult to rewrite the results of this section as follows:
\begin{proposition}\label{Besov non-local 2}
Let $p \ge 1$.
\begin{enumerate}[leftmargin=1.75em,label=\rm (\roman*)]
\item  If $ 0\le \alpha < 1/p$ we have $\mathbf{B}^{p,\alpha}(X)=\mathfrak{B}^{\alpha \delta d_W}_{p,\infty}(X)$ and  $\| f \|_{p,\alpha} \simeq N_{p,\infty}^{\alpha d_W}(f)$.
\item  $\mathbf{B}^{p,1/p}(X)=\mathfrak{B}^{\delta d_W/p}_{p,p}(X)$ and $\| f \|_{p,1/p } \simeq N_{p,p}^{ \delta d_W/p}(f)$.
\end{enumerate}
\end{proposition}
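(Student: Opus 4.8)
The plan is to reduce both assertions to the two main results of this section, Theorem~\ref{T:KS-Besov} and Proposition~\ref{P:W_norm}, by matching the seminorms $N^{\alpha}_{p,q}$ of \eqref{eq:Besov-seminorm-r}--\eqref{eq:Besov-seminorm} with the Korevaar-Schoen and fractional Sobolev seminorms already in play, up to multiplicative constants. For part~(i), fix $0\le\alpha<1/p$ and put $\lambda=\alpha\delta d_W$. I would first rewrite $\iint_{\{d(x,y)<r\}}(\,\cdot\,)\,d\mu(x)\,d\mu(y)=\int_X\int_{B(x,r)}(\,\cdot\,)\,d\mu(y)\,d\mu(x)$ and then invoke the Ahlfors $d_H$-regularity of Assumption~\ref{A1}, which gives $\mu(B(x,r))\simeq r^{d_H}$ with constants independent of $x$, to conclude that
\[
\bigl(N^{\lambda}_p(f,r)\bigr)^p=\frac{1}{r^{\lambda p+d_H}}\int_X\int_{B(x,r)}|f(x)-f(y)|^p\,d\mu(y)\,d\mu(x)\;\simeq\;\int_X\int_{B(x,r)}\frac{|f(x)-f(y)|^p}{r^{\lambda p}\mu(B(x,r))}\,d\mu(y)\,d\mu(x)
\]
uniformly in $r$ and $f$. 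Taking the supremum over $r>0$ yields $N^{\lambda}_{p,\infty}(f)\simeq\|f\|_{\mathcal{KS}^{\lambda,p}(X)}$, hence $\mathfrak{B}^{\lambda}_{p,\infty}(X)=\mathcal{KS}^{\lambda,p}(X)$ with comparable seminorms; combined with Theorem~\ref{T:KS-Besov} (whose proof also goes through at $\alpha=0$, since then $\int_0^\infty s^{d_H}\Phi(s)\,\frac{ds}{s}<\infty$) this gives~(i).

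For part~(ii), put $\lambda=\delta d_W/p$, so that $\lambda p+d_H=\delta d_W+d_H$. I would apply Tonelli's theorem to \eqref{eq:Besov-seminorm-pq} in order to carry out the $r$-integration first:
\[
\bigl(N^{\lambda}_{p,p}(f)\bigr)^p=\iint_{X\times X}|f(x)-f(y)|^p\Bigl(\int_{d(x,y)}^\infty\frac{dr}{r^{\lambda p+d_H+1}}\Bigr)d\mu(x)\,d\mu(y)=\frac{1}{\delta d_W+d_H}\,W_{\delta d_W/p,p}(f)^p,
\]
where I used $\int_\rho^\infty r^{-\lambda p-d_H-1}\,dr=(\lambda p+d_H)^{-1}\rho^{-\lambda p-d_H}$. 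Thus $N^{\delta d_W/p}_{p,p}(f)\simeq W_{\delta d_W/p,p}(f)$, so $\mathfrak{B}^{\delta d_W/p}_{p,p}(X)=\mathcal{W}^{\delta d_W/p,p}(X)$ with comparable seminorms, and Proposition~\ref{P:W_norm} delivers~(ii).

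I do not expect a serious obstacle here: the whole argument is bookkeeping on top of Theorem~\ref{T:KS-Besov} and Proposition~\ref{P:W_norm}. The one point that needs genuine care is that the comparison constants coming from Ahlfors regularity must be uniform in the base point $x$ (as Assumption~\ref{A1} guarantees), so that passing to the supremum over $r$ in the $\mathcal{KS}$ seminorm is legitimate; beyond that one only has to keep the exponent arithmetic straight, noting that the seminorm appearing in~(i) is $N^{\alpha\delta d_W}_{p,\infty}(f)$ (the walk-dimension exponent inherits the factor $\delta$ from the subordination), in agreement with the index of the space $\mathfrak{B}^{\alpha\delta d_W}_{p,\infty}(X)$.
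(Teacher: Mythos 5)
Your argument is correct and is exactly the routine rewriting that the paper leaves implicit: part (i) follows by converting $N^{\alpha\delta d_W}_{p,\infty}$ into the $\mathcal{KS}^{\alpha\delta d_W,p}$ seminorm via Ahlfors regularity and invoking Theorem~\ref{T:KS-Besov} (with the observation that the relevant integrals still converge at $\alpha=0$), and part (ii) follows by integrating out $r$ with Tonelli to identify $N^{\delta d_W/p}_{p,p}$ with $W_{\delta d_W/p,p}$ and invoking Proposition~\ref{P:W_norm}. You also correctly read the seminorm in (i) as $N^{\alpha\delta d_W}_{p,\infty}(f)$, matching the index of $\mathfrak{B}^{\alpha\delta d_W}_{p,\infty}(X)$.
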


\section{BV functions and fractional Laplacian}\label{S:BVandFL}

In this section we introduce and analyze the space of BV functions that arises as the space $\mathbf{B}^{1,\alpha}(X)$ at the critical value of the exponent $\alpha$. The weak Bakry-\'Emery estimate from Assumption~\ref{A3}  plays a crucial role when this critical exponent is less than $1$, because then the results rely upon those obtained in~\cite{ABCRST3} for the local setting.  Indeed, many of our results in this case are easily proved from the equivalence of seminorms in Proposition~\ref{T:KS-Besov} and the characterizations of Korevaar-Schoen spaces in~\cite{ABCRST3}.  We will also see that new reasoning is needed to analyze the weak Sobolev spaces that occur in the alternative situation where the critical exponent is $\alpha=1$.

\subsection{$L^1$ critical exponent}
Recall that $0<\kappa<d_W$ is the H\"older regularity parameter from the weak Bakry-\'Emery condition~\eqref{E:wBECD} and we defined in the introduction the critical exponent
\begin{align*}
&\alpha_1^{\#}=\sup\{\alpha>0\,\colon\,\mathbf{B}^{1,\alpha}(X)\text{ contains non-constant functions}\}.\label{E:alpha_moll} 
\end{align*}

\begin{theorem}\label{L1 critical}\mbox{}
One has the following:
\begin{enumerate}[leftmargin=1.75em,label=\rm (\roman*)]
\item If $\delta \le 1-\frac{\kappa}{d_W}$, then $\alpha_1^{\#}=1$.
\item If $\delta > 1-\frac{\kappa}{d_W}$, then $\alpha_1^{\#}=\frac{1}{\delta} \left( 1-\frac{\kappa}{d_W}\right) $.
\end{enumerate}
\end{theorem}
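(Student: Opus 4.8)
The plan is to pin down $\alpha_1^\#$ by combining two ingredients: the classification of $\mathbf B^{1,\alpha}(X)$ from Section~\ref{S:Besov} (which reduces the question, for $\alpha<1$, to the existence of non-constant functions in $KS^{\alpha\delta d_W,1}(X)$), together with the weak Bakry-\'Emery estimate of Lemma~\ref{L:wBECdelta} (which produces non-constant functions in $\mathbf B^{1,\alpha}(X)$ for small $\alpha$). Recall that by Assumption~\ref{A3} the local Korevaar-Schoen critical exponent satisfies $d_W-\kappa = \sup\{\lambda>0:KS^{\lambda,1}(X)\text{ has non-constant functions}\}$; this is the single geometric input.

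\emph{Step 1 (the regime $\alpha<1/p=1$).} By Theorem~\ref{T:KS-Besov} with $p=1$, for $0<\alpha<1$ one has $\mathbf B^{1,\alpha}(X)=KS^{\alpha\delta d_W,1}(X)$. Hence $\mathbf B^{1,\alpha}(X)$ contains non-constant functions if and only if $\alpha\delta d_W < d_W-\kappa$, i.e.\ $\alpha<\frac1\delta(1-\frac{\kappa}{d_W})$. Set $\beta:=\frac1\delta(1-\frac{\kappa}{d_W})$.

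\emph{Step 2 (the dichotomy).} If $\delta>1-\frac{\kappa}{d_W}$ then $\beta<1$, so Step~1 already identifies all $\alpha$ in $(0,1)$ for which the space is non-trivial and shows $\alpha_1^\#=\beta$, which is statement (ii); one should note for definiteness that at $\alpha=\beta$ itself the space $KS^{\beta\delta d_W,1}=KS^{d_W-\kappa,1}$ need not contain non-constant functions, but this does not affect the supremum. If instead $\delta\le 1-\frac{\kappa}{d_W}$, then $\beta\ge1$, so Step~1 gives non-constant functions in $\mathbf B^{1,\alpha}(X)$ for every $\alpha<1$; combined with Proposition~\ref{trivial Lp} (which kills $\mathbf B^{1,\alpha}$ for $\alpha>1$) this forces $\alpha_1^\#=1$, which is statement (i). The only point that needs genuine verification in case (i) is that $\mathbf B^{1,\alpha}(X)$ is not entirely trivial as $\alpha\uparrow1$ — but this follows because any fixed non-constant $f\in KS^{\lambda_0,1}(X)$ for some $\lambda_0<d_W-\kappa$ lies in $\mathbf B^{1,\alpha}(X)$ for all $\alpha\le\lambda_0/(\delta d_W)$, and such $f$ exists by Assumption~\ref{A3}; then monotonicity of the seminorms in $\alpha$ on a bounded space, or directly the inclusion $KS^{\lambda,1}\subseteq KS^{\lambda',1}$ for $\lambda'<\lambda$, propagates this up to all $\alpha<1$.

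\emph{Main obstacle.} The subtle issue is the behavior exactly at the endpoint and the reconciliation of two a priori different critical exponents: the Korevaar-Schoen threshold $d_W-\kappa$ appearing in Assumption~\ref{A3}, versus the Besov threshold $\alpha_1^\#$. The translation between them is exactly the factor $\delta d_W$ coming from the time-scaling of the subordinated semigroup (its walk dimension is $\delta d_W$, cf.\ \eqref{eq:HKE-non-loc}), so the real work is checking that Theorem~\ref{T:KS-Besov} applies on the full range $\alpha\in(0,1)$ and that no non-constant functions sneak in at or above the threshold from the non-local tail of $p_t^{(\delta)}$. The estimate $A(t,r)$ in the proof of Theorem~\ref{T:KS-Besov}, controlled using $\alpha<1/p$, is precisely what guarantees this; for $\alpha=1$ the correct replacement is Proposition~\ref{P:W_norm}/Proposition~\ref{trivial Lp}, so the case split in the statement is genuinely necessary and the two halves must be argued separately as above. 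I would also remark that in case (i) one can say more: $\mathbf B^{1,1}(X)=\mathcal W^{\delta d_W,1}(X)$ is itself the BV space and is generally non-trivial, but establishing that requires the material of Section~\ref{S:fractional_Lp} rather than just the present section.
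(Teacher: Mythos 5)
Your proposal is correct and follows essentially the same route as the paper's proof: bound $\alpha_1^\#\le 1$ via Proposition~\ref{trivial Lp}, identify $\mathbf{B}^{1,\alpha}(X)$ with $KS^{\alpha\delta d_W,1}(X)$ for $\alpha<1$ via Theorem~\ref{T:KS-Besov}, and convert the Korevaar--Schoen critical exponent $d_W-\kappa$ into $\alpha_1^\#=\min\bigl\{1,\tfrac{1}{\delta}\bigl(1-\tfrac{\kappa}{d_W}\bigr)\bigr\}$. The only cosmetic difference is that you invoke the defining property of $\kappa$ in Assumption~\ref{A3} (plus monotonicity of the $KS$ scale) where the paper cites \cite[Theorem 4.9]{ABCRST3}, which is the same content.
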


\begin{proof}
Notice first that Proposition~\ref{trivial Lp} implies $\alpha_1^\#\leq 1$. In addition, for any $\alpha <1$, Theorem~\ref{T:KS-Besov} yields
\begin{equation}\label{E:critical_KS}
\B^{1,\alpha}(X)=KS^{\alpha \delta d_W,1}(X)=\mathcal{KS}^{\alpha \delta d_W,1}(X).
\end{equation}
In view of~\cite[Theorem 4.9]{ABCRST3}, the critical exponent in the Korevaar-Schoen space is $d_W-\kappa$, hence $\alpha_1^\#=\min\Bigl\{1,\frac{1}{\delta}\big(1-\frac{\kappa}{d_W}\big)\Bigr\}$.
\end{proof}

\begin{example}
If $(\mathcal{E}^{(\delta)},\mathcal{F}^{(\delta)})$, $\delta \in (0,1)$, is the Dirichlet form on $\mathbb{R}^n$ associated with the fractional Laplacian $(-\Delta)^\delta$, namely
\[
\mathcal{E}^{(\delta)}(f,f) \simeq  \int_{\mathbb{R}^n}\int_{\mathbb{R}^n} \frac{|f(x)-f(y)|^2}{\| x-y\|^{n+2\delta }}\,dy\,dx\,,
\]
then $\kappa=1$ and $d_W=2$, so $\alpha^\#_1=\min \big\{1, \frac{1}{2\delta}\big\}$ and
\begin{align*}
\mathbf{B}^{1,\alpha^\#_1}(X) =
\begin{cases}
\mathcal{KS}^{1,1}(\mathbb{R}^n)=\mathbf{BV}(\mathbb{R}^n) & \text{ if } \delta>1/2, \\
\mathcal W^{2\delta,1} (\mathbb{R}^n) &   \text{ if } \delta <1/2.
\end{cases}
\end{align*}

Interestingly, for $\delta=\frac{1}{2}$, one has
\[
\mathbf{B}^{1,\alpha^\#_1}(X) =\left\{ f \in L^1(\mathbb R^n,dx)\,\colon\,  \int_{\mathbb R^n}\int_{\mathbb R^n}\frac{|f(x)-f(y)|}{|x-y|^{n+1}}dy\,dx <+\infty \right\}
\]
which, by~\cite[Proposition 1]{Brezisconstant}, is a trivial space containing only the zero function. 
\end{example}

\subsection{BV functions}

In~\cite{ABCRST3} it was argued that Korevaar-Schoen and Besov spaces provide natural analogues of the space of bounded variation functions in certain metric settings.  The variation was defined as follows.

\begin{definition}\label{def-BV}
Set $BV(X):=KS^{d_W-\kappa,1}(X)$ and for $f \in BV(X)$ let
\[
\mathbf{Var} (f):=\liminf_{r\to 0^+}\int_X\int_{B(x,r)}\frac{|f(y)-f(x)|}{r^{d_W-\kappa} \mu(B(x,r))}\, d\mu(y)\, d\mu(x).
\]
\end{definition}

Observe that the crucial difference between this and the $KS^{d_W-\kappa,1}(X)$ norm is that $\mathbf{Var} (f)$ has a $\liminf$ rather than a $\limsup$.  The fact that the $\liminf$ is sufficient for the theory at the critical exponent $\alpha=\alpha_1^\#$ when  $\alpha_1^\#<1$ (equivalently $\delta > 1-\frac{\kappa}{d_W}$) 
relies heavily on the weak Bakry-\'Emery estimate through the results of~\cite{ABCRST3}. The corresponding results when $\alpha_1^\#=1$ use the much easier Corollary~\ref{cor-Pprop}.  One major consequence of the weak Bakry-\'Emery estimate is the following characterization of BV functions in terms of the semigroup-defined space $\mathbf{B}^{1,\alpha_1^\#}(X)$ at the critical exponent.

\begin{theorem}\label{T:BV_char}
Assume 
$\alpha_1^\#<1$.
Then $\mathbf{B}^{1,\alpha_1^\#}(X) =BV(X)$ and there exist constants $c,C>0$ such that for every $f \in BV(X)$,
\[
c \mathbf{Var} (f) \le \| f \|_{1,\frac{1}{\delta} \left( 1-\frac{\kappa}{d_W}\right)} \le C \mathbf{Var} (f).
\]
\end{theorem}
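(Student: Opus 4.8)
The plan is to reduce this statement to the corresponding results in~\cite{ABCRST3} on Korevaar-Schoen spaces, using Theorem~\ref{T:KS-Besov} as the bridge. Since we are assuming $\alpha_1^\#<1$, Theorem~\ref{L1 critical}(ii) tells us the critical value is $\alpha_1^\# = \frac{1}{\delta}(1-\frac{\kappa}{d_W})$, and in particular this value is strictly less than $1/p = 1$. Thus we are squarely in the regime $0<\alpha<1/p$ of Theorem~\ref{T:KS-Besov}, which gives the identification of sets
\[
\mathbf{B}^{1,\alpha_1^\#}(X) = KS^{\alpha_1^\#\delta d_W,\,1}(X) = KS^{d_W-\kappa,\,1}(X) = BV(X),
\]
where the middle equality is just the arithmetic $\alpha_1^\#\delta d_W = d_W-\kappa$, and the last is Definition~\ref{def-BV}. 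This already settles the equality of the two function spaces. It also gives the equivalence of the Besov seminorm $\|f\|_{1,\alpha_1^\#}$ with the $\limsup$-type Korevaar-Schoen seminorm $\|f\|_{KS^{d_W-\kappa,1}(X)}$, up to multiplicative constants.

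The remaining work is to replace the $\limsup$ in the Korevaar-Schoen seminorm by the $\liminf$ that defines $\mathbf{Var}(f)$, i.e.\ to show $\|f\|_{KS^{d_W-\kappa,1}(X)} \simeq \mathbf{Var}(f)$ for $f\in BV(X)$. The inequality $\mathbf{Var}(f)\le \|f\|_{KS^{d_W-\kappa,1}(X)}$ is trivial since $\liminf\le\limsup$. For the reverse direction — this is the only substantive point — I would invoke the relevant result from~\cite{ABCRST3}: under the weak Bakry-\'Emery condition (Assumption~\ref{A3}), the $\liminf$ and $\limsup$ definitions of the critical Korevaar-Schoen seminorm are comparable, precisely the content flagged in the discussion following Definition~\ref{def-BV} and encapsulated in~\cite[Theorem 4.9]{ABCRST3} (or the companion statement there on $\mathbf{Var}$). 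The mechanism is that the weak Bakry-\'Emery estimate controls oscillation of $P_t g$ at the correct scale $d(x,y)^\kappa t^{-\kappa/d_W}$, which lets one transfer a bound available along a sequence $r_j\to 0$ to all small $r$ by a semigroup-smoothing/interpolation argument; this is exactly why the hypothesis $\alpha_1^\#<1$ (equivalently $\delta>1-\kappa/d_W$) forces us to use curvature rather than the elementary Corollary~\ref{cor-Pprop}.

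Chaining the two seminorm comparisons — the Besov-to-KS equivalence from Theorem~\ref{T:KS-Besov} and the $\limsup$-to-$\liminf$ equivalence from~\cite{ABCRST3} — produces constants $c,C>0$ with
\[
c\,\mathbf{Var}(f) \le \|f\|_{1,\frac{1}{\delta}(1-\frac{\kappa}{d_W})} \le C\,\mathbf{Var}(f),
\]
which is the claim. I expect the main obstacle to be purely one of bookkeeping: making sure that the exponent matching $\alpha_1^\#\delta d_W = d_W-\kappa$ is applied consistently, and that the version of~\cite[Theorem 4.9]{ABCRST3} being cited genuinely delivers the $\liminf$/$\limsup$ comparison for the seminorm and not merely the equality of the underlying spaces. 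If~\cite{ABCRST3} only provides the set equality, one would additionally need a short closed-graph or direct argument to upgrade to seminorm equivalence, but given the way the authors have set up Definition~\ref{def-BV} and the surrounding remarks, the comparison of $\mathbf{Var}$ with the $KS$ seminorm should be available there verbatim.
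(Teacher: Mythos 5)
Your proposal is correct and follows essentially the same route as the paper: the paper's proof likewise combines the seminorm equivalence of Theorem~\ref{T:KS-Besov} (valid since $\alpha_1^\#<1$, with the exponent identity $\alpha_1^\#\delta d_W=d_W-\kappa$) with the fact, from \cite[Proposition~4.1, Theorem~4.9]{ABCRST3} under the weak Bakry-\'Emery estimate, that the $KS^{d_W-\kappa,1}$ seminorm is bounded above and below by $\mathbf{Var}(f)$. Your caveat about whether the cited results give the seminorm comparison rather than just set equality is resolved exactly as you anticipated: those results do provide the two-sided bound between $\|f\|_{KS^{d_W-\kappa,1}}$ and $\mathbf{Var}(f)$.
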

\begin{proof}
According to~\cite[Proposition~4.1, Theorem~4.9 ]{ABCRST3}, the weak Bakry-\'Emery estimate implies that 
the Korevaar-Schoen norm $\|f\|_{KS^{d_W-\kappa,1}}$ is bounded above and below by $\mathbf{Var} (f)$.  Since $\alpha_1^\#<1$ we can apply the equivalence of seminorms established in Theorem~\ref{T:KS-Besov} to obtain the result.
\end{proof}

\begin{remark}
In the case $\delta \le 1-\frac{\kappa}{d_W}$, Theorem~\ref{L1 critical} yields the critical parameter $\alpha_1^\#=1$ and we have that $\mathbf{B}^{1,1}(X)=\mathcal{W}^{ \delta d_W,1}(X)$ is a fractional Sobolev space. In this regime, the variation of a function is the non local-quantity
\[
\liminf_{t \to 0^+} \frac{1}{t} \int_X \int_X |f(x)-f(y) | p^{(\delta)}_t (x,y) d\mu(x) d\mu(y) \simeq \int_X\int_X\frac{|f(x)-f(y)|}{d(x,y)^{d_H+\delta d_W}}\,d\mu(y)d\mu(x) .
\]
When $X=\mathbb{R}^d$, one has $d_H=d$, $d_W=2$, $\kappa=1$ and the above notion of variation of a function related to the fractional Laplacian $(-\Delta)^\delta$, $\delta \le 1/2$, coincides with the notion of fractional variation and associated fractional perimeter extensively studied in relation to the theory of non-local minimal surfaces, see for instance \cite{ADM11,BFV14,CRS10,CV11, FFMMM, FLS08} and the references therein.
\end{remark}

\subsection{Co-area formulas}\label{ssec-coarea}
In this section, we prove that the Besov norm $\| \cdot \|_{1,\alpha}$ always satisfies co-area type estimates at the critical exponent $\alpha=\alpha_1^\#$.
In view of Theorem~\ref{L1 critical} we must consider the cases $\alpha_1^\#<1$ and $\alpha_1^\#=1$.  In both we write $E_t(f):=\{x\in X\,\colon\,f(x)>t\}$.
\begin{theorem}
Assume 
$\alpha_1^\#<1$. 
There exist constants $C_1,C_2>0$ such that for any non-negative $f\in L^1(X,\mu)$ and $t>0$, 
\begin{equation*}
C_1\int_0^\infty  \| \mathbf{1}_{E_t(f)} \|_{1,\alpha_1^\#} \,dt\leq  \| f \|_{1,\alpha_1^\#}  \leq C_2\int_0^\infty \| \mathbf{1}_{E_t(f)} \|_{1,\alpha_1^\#} \,dt.
\end{equation*}
In particular, $\mathbf{1}_{E_t(f)}\in \mathbf{B}^{1,\alpha_1^\#}(X) $ for any $f\in \mathbf{B}^{1,\alpha_1^\#}(X) $ and almost every $t>0$. Conversely, if $\mathbf{1}_{E_t(f)}\in \mathbf{B}^{1,\alpha_1^\#}(X) $ for almost every $t>0$ and $\int_0^\infty\| \mathbf{1}_{E_t(f)} \|_{1,\alpha_1^\#} \,dt<\infty$, then $f\in \mathbf{B}^{1,\alpha_1^\#}(X) $.
\end{theorem}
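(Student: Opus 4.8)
The plan is to prove the co-area estimate at the critical exponent $\alpha_1^\#<1$ by reducing it, via the equivalence of seminorms in Theorem~\ref{T:KS-Besov} (which applies since $\alpha_1^\#<1$), to a co-area estimate for the Korevaar–Schoen seminorm $\|\cdot\|_{\mathcal{KS}^{\alpha_1^\#\delta d_W,1}(X)}$, equivalently for the double-integral quantity $N^{\alpha_1^\#\delta d_W}_{1}(f,r)$ at fixed scale $r$. The key observation is the pointwise layer-cake identity
\[
|f(x)-f(y)| = \int_0^\infty |\mathbf{1}_{E_t(f)}(x)-\mathbf{1}_{E_t(f)}(y)|\,dt,
\]
valid for non-negative $f$. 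Substituting this into the double integral defining $N^{\alpha_1^\#\delta d_W}_{1}(f,r)$ and applying the Tonelli theorem immediately gives the exact identity
\[
\iint_{\{d(x,y)<r\}}\frac{|f(x)-f(y)|}{r^{\alpha_1^\#\delta d_W}\mu(B(x,r))}\,d\mu(y)\,d\mu(x)
= \int_0^\infty \iint_{\{d(x,y)<r\}}\frac{|\mathbf{1}_{E_t(f)}(x)-\mathbf{1}_{E_t(f)}(y)|}{r^{\alpha_1^\#\delta d_W}\mu(B(x,r))}\,d\mu(y)\,d\mu(x)\,dt
\]
for every $r>0$. This is a clean linearity statement before any $\limsup$ or $\liminf$ in $r$ is taken.

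The main obstacle is that $\limsup_{r\to0^+}$ and $\liminf_{r\to0^+}$ do not commute with the integral $\int_0^\infty \cdots\,dt$: one inequality needs Fatou and the other needs a reverse-Fatou/dominated argument, so one cannot simply pass to the limit on both sides of the exact identity above. This is precisely the point where the weak Bakry–Émery estimate enters, through the results of~\cite{ABCRST3} invoked in Theorem~\ref{T:BV_char}: that estimate makes $\mathbf{Var}(f)$ (a $\liminf$) comparable to the full $KS$-seminorm (a $\limsup$), so the gap between the two limiting operations is controlled. Concretely, I would proceed as follows. For the \emph{upper} bound on $\|f\|_{1,\alpha_1^\#}$, apply $\liminf_{r\to0^+}$ to the left side of the identity, use Fatou's lemma on the right to bring the $\liminf$ inside $\int_0^\infty dt$, bound the inner $\liminf$ by $c^{-1}\|\mathbf{1}_{E_t(f)}\|_{1,\alpha_1^\#}$ using Theorem~\ref{T:BV_char} (equivalence of $\mathbf{Var}$ and the Besov seminorm for indicator functions of superlevel sets), and finally use Corollary~\ref{cor-Pprop} together with Theorem~\ref{T:KS-Besov} to identify the outer $\liminf_{r\to0^+}$ of the left side with a constant times $\|f\|_{1,\alpha_1^\#}$. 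For the \emph{lower} bound, start from $\|f\|_{1,\alpha_1^\#}\simeq \limsup_{r\to0^+}(\cdots)$, bound the double integral at each scale $r$ by $\int_0^\infty(\cdots)_t\,dt$ via the exact identity, then bound each $(\cdots)_t$ uniformly in $r$ by $C\|\mathbf{1}_{E_t(f)}\|_{1,\alpha_1^\#}$ and integrate — the uniformity in $r$ coming from the $\mathcal{KS}$ (i.e.\ $\sup_r$) seminorm, which by Theorem~\ref{T:KS-Besov} is equivalent to the $\limsup_r$ version.

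For the final three sentences of the statement (the membership assertions), these follow formally from the two-sided estimate. If $f\in\mathbf{B}^{1,\alpha_1^\#}(X)$ then the right-hand integral $\int_0^\infty\|\mathbf{1}_{E_t(f)}\|_{1,\alpha_1^\#}\,dt$ is finite by the upper bound, so the non-negative integrand is finite for a.e.\ $t$, i.e.\ $\mathbf{1}_{E_t(f)}\in\mathbf{B}^{1,\alpha_1^\#}(X)$ for a.e.\ $t>0$; for a general (signed) $f\in L^1$ one applies this to $f^+$ and $f^-$ separately, noting $E_t(f)=E_t(f^+)$ for $t>0$. Conversely, finiteness of $\int_0^\infty\|\mathbf{1}_{E_t(f)}\|_{1,\alpha_1^\#}\,dt$ together with the lower bound forces $\|f\|_{1,\alpha_1^\#}<\infty$, hence $f\in\mathbf{B}^{1,\alpha_1^\#}(X)$ once we know $f\in L^1$, which is part of the hypothesis. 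I expect the layer-cake identity and Tonelli to be entirely routine; the genuine content, and the only place real work is needed, is organizing the interchange of the scale-limit with the $t$-integral so that the $\mathcal{KS}$ versus $KS$ (sup versus limsup) and the $\mathbf{Var}$ versus $\|\cdot\|_{KS^{d_W-\kappa,1}}$ equivalences from~\cite{ABCRST3} can be invoked on both sides with the correct direction of each inequality.
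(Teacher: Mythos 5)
Your strategy is workable but it is not the paper's: the paper disposes of this theorem in one line, using Theorem~\ref{T:BV_char} to replace $\|\cdot\|_{1,\alpha_1^\#}$ by $\mathbf{Var}(\cdot)$ and then quoting the co-area formula already proved for the local BV seminorm in \cite[Theorem 4.15]{ABCRST3}. You instead re-derive that co-area estimate from scratch (layer-cake identity at each fixed scale $r$, Tonelli, Fatou in one direction, a uniform-in-$r$ bound through the $\mathcal{KS}$ sup-seminorm in the other) and then transfer through the same seminorm equivalences. That is a legitimate, more self-contained route --- it is essentially the mechanism the paper itself uses in the case $\alpha_1^\#=1$ (Theorem~\ref{coarea 2}), where no local result is available to cite --- and your argument for $\|f\|_{1,\alpha_1^\#}\le C_2\int_0^\infty\|\mathbf{1}_{E_t(f)}\|_{1,\alpha_1^\#}\,dt$ (exact identity at fixed $r$, bound each $t$-slice by the $\sup_r$ seminorm, which is comparable to the Besov seminorm by Theorem~\ref{T:KS-Besov}) is clean.

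Two points in the other inequality need repair. First, your bookkeeping is off: the Fatou step yields $\int_0^\infty\mathbf{Var}(\mathbf{1}_{E_t(f)})\,dt\le\mathbf{Var}(f)$, i.e.\ the \emph{first} inequality of the statement, not an upper bound on $\|f\|_{1,\alpha_1^\#}$; and Corollary~\ref{cor-Pprop} is not the relevant tool in the regime $\alpha_1^\#<1$ (it concerns the heat-kernel seminorm at $\alpha=1/p$ and is used when $\alpha_1^\#=1$) --- the identification of the $\liminf_{r\to0^+}$ of the left-hand side with $\|f\|_{1,\alpha_1^\#}$, up to constants, is exactly Theorem~\ref{T:BV_char}. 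Second, and more substantively, after Fatou you must convert $\mathbf{Var}(\mathbf{1}_{E_t(f)})$ into $\|\mathbf{1}_{E_t(f)}\|_{1,\alpha_1^\#}$, but Theorem~\ref{T:BV_char} as stated applies only to functions already known to lie in $BV(X)$, which is a $\limsup$-finiteness condition, whereas at that point you only know the $\liminf$ is finite for a.e.\ $t$. Closing that gap is precisely the ``$\liminf$ suffices'' content of the weak Bakry-\'Emery machinery of \cite{ABCRST3} (the results around Theorem 4.9 and Lemma 4.13 there, which also underlie \cite[Theorem 4.15]{ABCRST3}); you gesture at this, but it must be invoked explicitly, since without it the implication $\mathbf{Var}(\mathbf{1}_{E_t(f)})<\infty\Rightarrow\mathbf{1}_{E_t(f)}\in\mathbf{B}^{1,\alpha_1^\#}(X)$ with comparable seminorm is not justified by the statements you cite.
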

\begin{proof}
By virtue of Theorem~\ref{T:BV_char}, this follows from~\cite[Theorem 4.15]{ABCRST3}.
\end{proof}

When $\alpha_1^\#=1$ (equivalently $\delta \le 1-\frac{\kappa}{d_W}$), the critical Besov space is $\mathbf{B}^{1,1}(X)=\mathcal{W}^{ \delta d_W,1}(X)$, so the corresponding result is proved by a different argument.

\begin{theorem}\label{coarea 2}
Assume $\alpha_1^\#=1$.
 There exist constants $C_1,C_2>0$ such that for any non-negative $f\in L^1(X,\mu)$ and $t>0$
\begin{equation*}
C_1\int_0^\infty  \| \mathbf{1}_{E_t(f)} \|_{1,1} \,dt\leq  \| f \|_{1,1}  \leq C_2\int_0^\infty \| \mathbf{1}_{E_t(f)} \|_{1,1} \,dt.
\end{equation*}
In particular, $\mathbf{1}_{E_t(f)}\in \mathbf{B}^{1,1}(X) $ for any $f\in \mathbf{B}^{1,1}(X) $ and almost every $t>0$. Conversely, if $\mathbf{1}_{E_t(f)}\in \mathbf{B}^{1,1}(X) $ for almost every $t>0$ and $\int_0^\infty\| \mathbf{1}_{E_t(f)} \|_{1,1}\,dt<\infty$, then $f\in \mathbf{B}^{1,1}(X)$.
\end{theorem}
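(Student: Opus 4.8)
The plan is to transfer the statement to an exact co-area identity for the Gagliardo-type seminorm and then invoke the norm equivalence of Proposition~\ref{P:W_norm}. Since we are in the regime $\alpha_1^\#=1$, Proposition~\ref{P:W_norm} applied with $p=1$ provides constants $c,C>0$, depending only on the structural constants of $(X,d,\mu)$, such that
\[
c\,W_{\delta d_W,1}(g)\ \le\ \|g\|_{1,1}\ \le\ C\,W_{\delta d_W,1}(g),\qquad W_{\delta d_W,1}(g):=\int_X\int_X\frac{|g(x)-g(y)|}{d(x,y)^{d_H+\delta d_W}}\,d\mu(y)\,d\mu(x),
\]
for every $g\in L^1(X,\mu)$. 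It is worth noting that the inequalities \eqref{E:W_norm_01} and \eqref{E:W_norm_02} underlying that proposition hold for all $g\in L^1(X,\mu)$ with both sides allowed to be $+\infty$, so this equivalence is uniform and does not presuppose $g\in\mathbf{B}^{1,1}(X)$; this lets us handle the case $\|f\|_{1,1}=\infty$ without extra work.

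The core of the argument is the elementary layer-cake identity: for $a,b\ge 0$,
\[
|a-b|=\int_0^\infty\bigl|\mathbf{1}_{(t,\infty)}(a)-\mathbf{1}_{(t,\infty)}(b)\bigr|\,dt,
\]
since, assuming $a\ge b$, the integrand equals $\mathbf{1}_{[b,a)}(t)$. Applying this with $a=f(x)$ and $b=f(y)$ (legitimate as $f\ge 0$) yields, for all $x,y\in X$,
\[
|f(x)-f(y)|=\int_0^\infty\bigl|\mathbf{1}_{E_t(f)}(x)-\mathbf{1}_{E_t(f)}(y)\bigr|\,dt.
\]
Dividing by $d(x,y)^{d_H+\delta d_W}$, integrating over $X\times X$, and applying Tonelli's theorem — valid because the integrand is non-negative and jointly measurable in $(t,x,y)$ — we obtain the \emph{exact} co-area formula $W_{\delta d_W,1}(f)=\int_0^\infty W_{\delta d_W,1}(\mathbf{1}_{E_t(f)})\,dt$.

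It remains to combine the two ingredients. For each $t>0$ the super-level set satisfies $\mu(E_t(f))\le t^{-1}\|f\|_{L^1(X,\mu)}<\infty$, hence $\mathbf{1}_{E_t(f)}\in L^1(X,\mu)$ and the norm equivalence above applies to it. Therefore
\[
\|f\|_{1,1}\le C\,W_{\delta d_W,1}(f)=C\int_0^\infty W_{\delta d_W,1}\bigl(\mathbf{1}_{E_t(f)}\bigr)\,dt\le \frac{C}{c}\int_0^\infty\|\mathbf{1}_{E_t(f)}\|_{1,1}\,dt,
\]
and symmetrically $\|f\|_{1,1}\ge c\,W_{\delta d_W,1}(f)\ge \frac{c}{C}\int_0^\infty\|\mathbf{1}_{E_t(f)}\|_{1,1}\,dt$, which gives the claim with $C_1=c/C$ and $C_2=C/c$. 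The membership statements follow immediately: if $f\in\mathbf{B}^{1,1}(X)$ then $\int_0^\infty\|\mathbf{1}_{E_t(f)}\|_{1,1}\,dt<\infty$, so $\|\mathbf{1}_{E_t(f)}\|_{1,1}<\infty$, i.e.\ $\mathbf{1}_{E_t(f)}\in\mathbf{B}^{1,1}(X)$, for a.e.\ $t>0$; conversely, if $\mathbf{1}_{E_t(f)}\in\mathbf{B}^{1,1}(X)$ for a.e.\ $t$ and $\int_0^\infty\|\mathbf{1}_{E_t(f)}\|_{1,1}\,dt<\infty$, then $\|f\|_{1,1}<\infty$ and, since $f\in L^1(X,\mu)$ by hypothesis, $f\in\mathbf{B}^{1,1}(X)$.

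There is no serious obstacle in this regime: because $\mathbf{B}^{1,1}(X)$ is a fractional Sobolev space whose seminorm is, up to constants, an honest double integral, the co-area structure is exact and splits term by term. This is in sharp contrast to the case $\alpha_1^\#<1$, where $\mathbf{Var}$ is a $\liminf$-type quantity that does not split and the co-area estimate had to be inherited from the weak Bakry-\'Emery--dependent analysis of~\cite{ABCRST3}. The only minor points demanding a line of justification are the joint measurability needed for Tonelli and the fact that $\mathbf{1}_{E_t(f)}\in L^1(X,\mu)$ for every $t>0$, ensuring that its Besov seminorm is well defined.
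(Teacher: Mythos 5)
Your proposal is correct and follows essentially the same route as the paper: write $|f(x)-f(y)|=\int_0^\infty|\mathbf{1}_{E_t(f)}(x)-\mathbf{1}_{E_t(f)}(y)|\,dt$, apply Fubini--Tonelli to get the exact co-area identity for $W_{\delta d_W,1}$, and transfer it to $\|\cdot\|_{1,1}$ via the seminorm equivalence of Proposition~\ref{P:W_norm}. The extra remarks (Chebyshev to ensure $\mathbf{1}_{E_t(f)}\in L^1(X,\mu)$, and that the equivalence holds with both sides possibly infinite) are harmless elaborations of details the paper leaves implicit.
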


\begin{proof}
Since $f \ge 0$, for $\mu$-almost every $x,y \in X$ we can write
\begin{equation*}
| f(y)-f(x)| =\int_{0}^{+\infty} |\mathbf{1}_{E_t(f)}(x) -\mathbf{1}_{E_t(f)}(y) | dt.
\end{equation*}
Therefore,
\begin{align*}
W_{\delta d_W, 1}(f)
=\int_X\int_X\frac{|f(x)-f(y)|}{d(x,y)^{d_H+\delta d_W}}\,d\mu(y)d\mu(x) 
=\int_X\int_X \int_0^{+\infty} \frac{|\mathbf{1}_{E_t(f)}(x) -\mathbf{1}_{E_t(f)}(y) |}{d(x,y)^{d_H+\delta d_W}}\,d\mu(y)d\mu(x)
\end{align*}
and the result follows from Fubini's theorem and Proposition \ref{P:W_norm}.
\end{proof}


\subsection{$L^1$ pseudo-Poincar\'e inequality}

The pseudo-Poincar\'e inequalities introduced in~\cite{S-C92,CS-C93} are a useful tool to prove Sobolev inequalities, see e.g.~\cite{S-C10}. In the present setting, we obtain the following ones.

\begin{theorem}\label{T:pseudo_PI}
Assume 
$\alpha_1^\#<1$. 
Then, for every  
 $ f \in \mathbf{B}^{1,\alpha_1^\#}(X)$
\[
\| P^{(\delta)}_t f -f \|_{L^1(X,\mu)} \le C t^{\alpha_1^\#} \mathbf{Var} (f).
\]
\end{theorem}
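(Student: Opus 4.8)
The plan is to reduce the $L^1$ pseudo-Poincar\'e inequality to the Korevaar--Schoen characterization of BV functions established in Theorem~\ref{T:BV_char}, combined with the subordination formula~\eqref{eq:Ptdelta} and the sub-Gaussian bounds~\eqref{eq:subGauss-upper}. Since $\alpha_1^\#<1$, Theorem~\ref{T:BV_char} identifies $\mathbf{B}^{1,\alpha_1^\#}(X)$ with $BV(X)=KS^{d_W-\kappa,1}(X)$ with norm comparable to $\mathbf{Var}(f)$, so it suffices to prove $\|P^{(\delta)}_t f - f\|_{L^1(X,\mu)}\le C t^{\alpha_1^\#}\mathbf{Var}(f)$ for $f\in KS^{d_W-\kappa,1}(X)$. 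The natural intermediate step is the \emph{local} pseudo-Poincar\'e inequality from~\cite{ABCRST3}, which under the weak Bakry-\'Emery estimate gives $\|P_s f - f\|_{L^1(X,\mu)}\le C s^{(d_W-\kappa)/d_W}\mathbf{Var}(f)$ for the original semigroup $\{P_s\}_{s>0}$; I would invoke that as the key input.

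First I would write, using~\eqref{eq:Ptdelta} and that $\int_0^\infty \eta^{(\delta)}_t(s)\,ds = 1$,
\[
P^{(\delta)}_t f - f = \int_0^\infty \eta^{(\delta)}_t(s)\,\bigl(P_s f - f\bigr)\,ds,
\]
so that by the triangle inequality in $L^1$ and the local pseudo-Poincar\'e inequality,
\[
\|P^{(\delta)}_t f - f\|_{L^1(X,\mu)} \le \int_0^\infty \eta^{(\delta)}_t(s)\,\|P_s f - f\|_{L^1(X,\mu)}\,ds \le C\,\mathbf{Var}(f)\int_0^\infty \eta^{(\delta)}_t(s)\,s^{1-\frac{\kappa}{d_W}}\,ds.
\]
Then I would apply the moment formula~\eqref{moment eta} with exponent $\alpha = 1-\frac{\kappa}{d_W}$: this is admissible precisely when $1-\frac{\kappa}{d_W}<\delta$, which is exactly the standing hypothesis $\delta > 1-\frac{\kappa}{d_W}$ equivalent to $\alpha_1^\#<1$. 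The formula gives $\int_0^\infty \eta^{(\delta)}_t(s)\,s^{1-\kappa/d_W}\,ds = \frac{\Gamma(1-\frac{1}{\delta}(1-\kappa/d_W))}{\Gamma(\kappa/d_W)}\,t^{\frac{1}{\delta}(1-\kappa/d_W)} = C\,t^{\alpha_1^\#}$, which is exactly the claimed power of $t$. Finally I would translate back via Theorem~\ref{T:BV_char} to express $\mathbf{Var}(f)$ in terms of $\|f\|_{1,\alpha_1^\#}$ if desired, though the statement is already phrased with $\mathbf{Var}(f)$.

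The main obstacle is obtaining the local $L^1$ pseudo-Poincar\'e inequality $\|P_s f - f\|_{L^1}\le C s^{(d_W-\kappa)/d_W}\mathbf{Var}(f)$, which I expect is either proved in~\cite{ABCRST3} or follows from the heat-semigroup characterization of the Korevaar--Schoen variation there; one should cite the precise statement. A secondary subtlety is the convergence/finiteness of the moment integral near $s=0$ and $s=\infty$: near $s=\infty$ the bound~\eqref{bound eta} on $\eta^{(\delta)}_t$ together with $1-\kappa/d_W<\delta$ controls the tail, and near $s=0$ one can instead use the crude bound $\|P_s f - f\|_{L^1}\le 2\|f\|_{L^1}$, splitting the integral at $s=t^{1/\delta}$ if needed to avoid relying on the local inequality for a possibly non-integrable exponent; but since $0<1-\kappa/d_W<1<d_W$ the exponent is positive and the integrand vanishes at $s=0$, so no splitting is actually required and~\eqref{moment eta} applies directly. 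It would be worth a sentence checking that the local pseudo-Poincar\'e inequality is available for $f\in BV(X)$ rather than only for Lipschitz or Sobolev functions, which is where the $\liminf$-versus-$\limsup$ distinction in Definition~\ref{def-BV} and the weak Bakry-\'Emery estimate do their work.
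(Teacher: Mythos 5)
Your proposal is correct and follows essentially the same route as the paper's proof: the subordination formula~\eqref{eq:Ptdelta} with the triangle inequality, the local $L^1$ pseudo-Poincar\'e inequality from~\cite{ABCRST3} (the paper combines Proposition~3.10 and Lemma~4.13 there, the latter providing exactly the passage from the heat-semigroup $\liminf$ quantity to $\mathbf{Var}(f)$ that you flag as the remaining input), and the moment formula~\eqref{moment eta} with exponent $1-\kappa/d_W<\delta$, yielding the constant $\frac{C\Gamma(1-\alpha_1^\#)}{\Gamma(\kappa/d_W)}$ and the power $t^{\alpha_1^\#}$ just as you computed.
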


\begin{proof}
Applying~\eqref{eq:Ptdelta} and~\cite[Proposition 3.10]{ABCRST3} we get
\begin{align*}
\|P_t^{(\delta)}f-f\|_{L^1(X,\mu)}
&=\int_X\Big|\int_0^\infty \eta_t^{(\delta)}(s)P_s(f-f(x))(x)\,ds\,\Big|\,d\mu(x)\\
&\leq
\int_0^\infty\eta_t^{(\delta)}(s)\|P_sf-f\|_{L^1(X,\mu)}\,ds\\
&\leq C\int_0^\infty\eta_t^{(\delta)}(s)\,s^{1-\frac{\kappa}{d_W}}ds\bigg(\liminf_{\tau\to 0^+}\frac{1}{\tau^{1-\frac{\kappa}{d_W}}}\int_XP_\tau(|f-f(x)|)(x)\,d\mu(x)\bigg).
\end{align*}
In view of~\cite[Lemma 4.13]{ABCRST3} and using~\eqref{moment eta}, this is bounded by $\frac{C\Gamma(1-\alpha_1^\#)}{\Gamma(\kappa/d_W)}t^{\alpha_1^\#}\mathbf{Var}(f)$.
\end{proof}

\begin{theorem}
Assume 
$\alpha_1^\#=1$.  
Then, for every $ f \in \mathbf{B}^{1,1}(X)$,
\[
\| P^{(\delta)}_t f -f \|_{L^1(X,\mu)} \le C t \int_X\int_X\frac{|f(x)-f(y)|}{d(x,y)^{d_H+\delta d_W}}\,d\mu(y)d\mu(x).
\]
\end{theorem}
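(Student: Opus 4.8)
The plan is to reduce the estimate to the pointwise heat kernel bound \eqref{eq:HKE-non-loc} together with conservativeness of the subordinated semigroup; in this regime ($\alpha_1^\#=1$, i.e.\ $\delta\le 1-\kappa/d_W$) neither a Korevaar-Schoen comparison nor the weak Bakry-\'Emery estimate is needed, in contrast to the case $\alpha_1^\#<1$. Note first that, by Theorem~\ref{L1 critical} and Proposition~\ref{P:W_norm}, $\mathbf{B}^{1,1}(X)=\mathcal{W}^{\delta d_W,1}(X)$, so for the $f$ under consideration the right-hand side $W_{\delta d_W,1}(f)=\int_X\int_X|f(x)-f(y)|\,d(x,y)^{-d_H-\delta d_W}\,d\mu(y)\,d\mu(x)$ is finite.

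First I would record that $\{P^{(\delta)}_t\}_{t>0}$ is conservative: under Assumptions~\ref{A1} and~\ref{sGKHE} the semigroup $\{P_t\}_{t>0}$ is stochastically complete, and subordination preserves this since, by \eqref{eq:Ptdelta} and \eqref{moment eta} with $\alpha=0$, $\int_X p^{(\delta)}_t(x,y)\,d\mu(y)=\int_0^\infty\eta^{(\delta)}_t(s)\,ds=1$. Hence for $\mu$-a.e.\ $x\in X$,
\[
P^{(\delta)}_tf(x)-f(x)=\int_X p^{(\delta)}_t(x,y)\bigl(f(y)-f(x)\bigr)\,d\mu(y),
\]
and taking absolute values, integrating in $x$, and invoking Tonelli's theorem (the integrand being nonnegative) yields
\[
\|P^{(\delta)}_tf-f\|_{L^1(X,\mu)}\le \int_X\int_X p^{(\delta)}_t(x,y)\,|f(x)-f(y)|\,d\mu(y)\,d\mu(x).
\]

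Next I would substitute the upper bound in \eqref{eq:HKE-non-loc}, rewritten exactly as in the proof of Proposition~\ref{P:W_norm}: pulling the powers of $t$ through the bracket gives $p^{(\delta)}_t(x,y)\le c_3\,t\,\bigl(t^{1/(\delta d_W)}+c_4 d(x,y)\bigr)^{-d_H-\delta d_W}$, and discarding the nonnegative summand $t^{1/(\delta d_W)}$ from the base leaves $p^{(\delta)}_t(x,y)\le C\,t\,d(x,y)^{-d_H-\delta d_W}$. Inserting this and pulling $t$ outside the double integral produces the claimed inequality. Since the whole argument is essentially a one-line consequence of the heat kernel estimate, I do not expect a genuine obstacle; the only steps deserving care are the conservativeness of $P^{(\delta)}_t$ (needed to rewrite the difference as an integral of $f(y)-f(x)$ against $p^{(\delta)}_t$) and the use of Tonelli, which is justified because the integrand is nonnegative and, by the heat kernel bound together with the finiteness of $W_{\delta d_W,1}(f)$, integrable.
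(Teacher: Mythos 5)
Your proof is correct and follows essentially the same route as the paper: bound $\| P^{(\delta)}_t f - f\|_{L^1}$ by $\int_X\int_X p_t^{(\delta)}(x,y)|f(x)-f(y)|\,d\mu(y)\,d\mu(x)$ and then apply the heat kernel upper bound rewritten as $p_t^{(\delta)}(x,y)\le C\,t\,d(x,y)^{-d_H-\delta d_W}$, exactly the argument of \eqref{E:W_norm_01} invoked in the paper's proof. Your extra care about conservativeness of $P_t^{(\delta)}$ and the Tonelli step only makes explicit what the paper leaves implicit.
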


\begin{proof}
Since 
\begin{equation*}
\| P^{(\delta)}_t f -f \|_{L^1(X,\mu)}\leq \int_X\int_Xp_t^{(\delta)}(x,y)|f(x)-f(y)|\,d\mu(y)\,d\mu(x),
\end{equation*}
the assertion follows as in~\eqref{E:W_norm_01}.
\end{proof}


\subsection{Isoperimetric  inequalities}

In this section we prove a Sobolev type inequality for the fractional Sobolev space $\mathcal{W}^{ \delta d_W,1}(X)$  and hence a fractional isoperimetric inequality.  For this purpose we must restrict to the case  $\delta \le 1-\frac{\kappa}{d_W}$, because if $\delta> 1-\frac{\kappa}{d_W}$ then, by Theorem~\ref{L1 critical}, $\alpha_1^\#<1$ and hence (using Proposition~\ref{P:W_norm}) $\mathcal{W}^{ \delta d_W,1}(X)=\mathbf{B}^{1,1}(X)$ is the space of constant functions.  The fact that we identify this critical range for $\delta$ also distinguishes our result from the corresponding one that appears, with a different proof,  in~\cite[Theorem 9.1]{BCLS95}.

\begin{theorem}
Assume $d_H > \delta d_W$. There exists a constant $C>0$ such that for every $f \in \mathbf{B}^{1,1}(X)$,
\[
\| f \|_{L^q(X,\mu)} \le C \int_X\int_X\frac{|f(x)-f(y)|}{d(x,y)^{d_H+\delta d_W}}\,d\mu(y)d\mu(x) ,
\]
where $q=\frac{d_H}{ d_H-\delta d_W}$.
\end{theorem}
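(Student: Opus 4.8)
The plan is to deduce this fractional Sobolev inequality from a Nash-type inequality obtained via the semigroup $\{P^{(\delta)}_t\}_{t>0}$, following the classical scheme that connects pseudo-Poincar\'e inequalities to Sobolev embeddings (as in~\cite{S-C92,CS-C93,S-C10}). The two ingredients are: (i) the $L^1$ pseudo-Poincar\'e inequality just proved, namely $\|P^{(\delta)}_tf-f\|_{L^1}\le Ct\,W_{\delta d_W,1}(f)$ for $f\in\mathbf B^{1,1}(X)$ (valid since $\alpha_1^\#=1$ in this regime), and (ii) the ultracontractivity estimate $\|P^{(\delta)}_tf\|_{L^\infty}\le Ct^{-d_H/(\delta d_W)}\|f\|_{L^1}$, which is immediate from the on-diagonal heat kernel bound in~\eqref{eq:HKE-non-loc}.

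First I would combine (i) and (ii): writing $f=(f-P^{(\delta)}_tf)+P^{(\delta)}_tf$ and taking $L^1$ and $L^\infty$ norms respectively gives, for every $t>0$,
\[
\|f\|_{L^1(X,\mu)}\le \|f-P^{(\delta)}_tf\|_{L^1(X,\mu)}+\mu(X)^{?}\cdots
\]
— more usefully, one interpolates: for any measurable set or via Chebyshev one shows $\|f\|_{L^q}$ is controlled by a geometric mean. The cleanest route is to use the standard truncation argument. Fix $f\ge 0$ (it suffices, replacing $f$ by $|f|$, which does not increase $W_{\delta d_W,1}$), and for $k\in\mathbb Z$ set $f_k:=\min\{(f-2^k)_+,2^k\}$, the dyadic truncations. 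Each $f_k$ lies in $\mathbf B^{1,1}(X)$ with $\sum_k W_{\delta d_W,1}(f_k)\le C\,W_{\delta d_W,1}(f)$ by the co-area formula of Theorem~\ref{coarea 2} (or directly, since $\sum_k|f_k(x)-f_k(y)|\le|f(x)-f(y)|$). For each truncation level one has the two-sided bound $2^k\mathbf 1_{E_{2^{k+1}}(f)}\le f_k\le 2^k\mathbf 1_{E_{2^k}(f)}$, so it is enough to prove the inequality for characteristic functions $\mathbf 1_E$ with $E=E_s(f)$: namely $\mu(E)^{1/q}\le C\,W_{\delta d_W,1}(\mathbf 1_E)$, and then sum the resulting geometric series over $k$ using $q>1$ (here $d_H>\delta d_W$ guarantees $q=d_H/(d_H-\delta d_W)>1$ is finite).

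To get the inequality for $\mathbf 1_E$: apply the pseudo-Poincar\'e inequality to $\mathbf 1_E$ to get $\|P^{(\delta)}_t\mathbf 1_E-\mathbf 1_E\|_{L^1}\le Ct\,W_{\delta d_W,1}(\mathbf 1_E)$, and ultracontractivity to get $\|P^{(\delta)}_t\mathbf 1_E\|_{L^\infty}\le Ct^{-d_H/(\delta d_W)}\mu(E)$. Then
\[
\mu(E)=\|\mathbf 1_E\|_{L^1(X,\mu)}\le\|\mathbf 1_E-P^{(\delta)}_t\mathbf 1_E\|_{L^1(X,\mu)}+\int_E P^{(\delta)}_t\mathbf 1_E\,d\mu\le Ct\,W_{\delta d_W,1}(\mathbf 1_E)+Ct^{-d_H/(\delta d_W)}\mu(E)^2.
\]
Choosing $t$ so that the second term equals $\tfrac12\mu(E)$, i.e. $t^{d_H/(\delta d_W)}\simeq\mu(E)$, absorbs it into the left side and yields $\mu(E)\le Ct\,W_{\delta d_W,1}(\mathbf 1_E)\simeq\mu(E)^{\delta d_W/d_H}W_{\delta d_W,1}(\mathbf 1_E)$, which rearranges to $\mu(E)^{1-\delta d_W/d_H}=\mu(E)^{1/q}\le C\,W_{\delta d_W,1}(\mathbf 1_E)$, the desired isoperimetric inequality. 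Finally, summing over the dyadic truncation levels and using the layer-cake / co-area bound $\sum_k W_{\delta d_W,1}(f_k)\lesssim W_{\delta d_W,1}(f)$ together with $\|f\|_{L^q}^q\lesssim\sum_k 2^{kq}\mu(E_{2^k}(f))$ gives the full statement.

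\textbf{Main obstacle.} The delicate point is the summation step: from $\mu(E_{2^k}(f))^{1/q}\le C\,W_{\delta d_W,1}(\mathbf 1_{E_{2^k}(f)})$ for each $k$ one must pass to $\|f\|_{L^q}\le C\sum_k W_{\delta d_W,1}(\mathbf 1_{E_{2^k}(f)})\le C\,W_{\delta d_W,1}(f)$, and this requires the $\ell^q\hookrightarrow\ell^1$-type inequality $\big(\sum_k a_k^q\big)^{1/q}\le\sum_k a_k$ (valid for $q\ge1$) applied with $a_k=2^k\mu(E_{2^k}(f))^{1/q}$, combined carefully with the co-area estimate of Theorem~\ref{coarea 2}; one has to make sure the constants do not degenerate as $d_H\downarrow\delta d_W$ (they do, which is why $q<\infty$ strictly is needed) and that the truncations are handled so that the telescoping of $\sum_k|f_k(x)-f_k(y)|\le|f(x)-f(y)|$ is used rather than a lossy triangle inequality. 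Everything else is a routine assembly of the pseudo-Poincar\'e and ultracontractivity bounds already available in the paper.
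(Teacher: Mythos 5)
Your route is genuinely different from the paper's: the paper proves this in one line by observing that Corollary~\ref{cor-Pprop} gives the property $(P_{1,1})$ of \cite[Definition 6.7]{ABCRST1} and then invoking \cite[Theorem 6.9]{ABCRST1} together with Proposition~\ref{P:W_norm}, whereas you give a self-contained argument from the pseudo-Poincar\'e inequality, the ultracontractivity bound $\|P^{(\delta)}_tf\|_{L^\infty}\le Ct^{-d_H/(\delta d_W)}\|f\|_{L^1}$ (immediate from \eqref{eq:HKE-non-loc}), and the co-area identity. That is essentially a reproof of the black-boxed result, and its core step is fine: for $E=E_s(f)$ with $\mu(E)<\infty$, the absorption argument with $t^{d_H/(\delta d_W)}\simeq\mu(E)$ correctly yields $\mu(E)^{1/q}\le C\,W_{\delta d_W,1}(\mathbf{1}_E)$. (Minor remark: the pseudo-Poincar\'e bound $\|P^{(\delta)}_tg-g\|_{L^1}\le Ct\,W_{\delta d_W,1}(g)$ needs no assumption $\alpha_1^\#=1$; its proof is just the kernel upper bound as in \eqref{E:W_norm_01}, so applying it to $\mathbf{1}_E$ is legitimate in the whole range $d_H>\delta d_W$.)

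The genuine gap is the summation step in the form you state it. You want $\|f\|_{L^q}\lesssim\sum_k 2^k\mu(E_{2^k})^{1/q}\le C\sum_k 2^k W_{\delta d_W,1}(\mathbf{1}_{E_{2^k}})$ and then to bound this by $W_{\delta d_W,1}(f)$; that last bound is false at fixed dyadic levels, because the co-area identity controls the integral $\int_0^\infty W_{\delta d_W,1}(\mathbf{1}_{E_s})\,ds$, not the values at the particular levels $s=2^k$: the fractional perimeter of $E_s$ is not monotone in $s$, and a small perturbation of $f$ near a single level can make $W_{\delta d_W,1}(\mathbf{1}_{E_{2^k}})$ arbitrarily large while $W_{\delta d_W,1}(f)$ stays bounded. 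Nor does the telescoping bound $\sum_k W_{\delta d_W,1}(f_k)\le W_{\delta d_W,1}(f)$ rescue this chain, since $W_{\delta d_W,1}(f_k)=\int_{2^k}^{2^{k+1}}W_{\delta d_W,1}(\mathbf{1}_{E_s})\,ds$ is not comparable to $2^kW_{\delta d_W,1}(\mathbf{1}_{E_{2^k}})$. The repair is easy and in fact removes the truncations altogether: for $f\ge0$ write $f=\int_0^\infty\mathbf{1}_{E_s(f)}\,ds$ and apply Minkowski's integral inequality, so that $\|f\|_{L^q}\le\int_0^\infty\mu(E_s)^{1/q}\,ds\le C\int_0^\infty W_{\delta d_W,1}(\mathbf{1}_{E_s})\,ds=C\,W_{\delta d_W,1}(f)$, the last identity being exactly the Fubini computation in the proof of Theorem~\ref{coarea 2}; then pass from $f$ to $|f|$. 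Alternatively, keep dyadic bands but choose levels $s_k\in[2^{k-1},2^k]$ with $2^{k-1}W_{\delta d_W,1}(\mathbf{1}_{E_{s_k}})\le\int_{2^{k-1}}^{2^k}W_{\delta d_W,1}(\mathbf{1}_{E_s})\,ds$ by a mean-value selection, or run the Maz'ya truncation by applying the weak $(1,q)$ estimate to the truncations $f_k$ themselves (using $\|f_k\|_{L^1}\le2^k\mu(E_{2^k})$, H\"older in $k$ with exponents $d_H/(\delta d_W)$ and $q$, and $\sum_kW_{\delta d_W,1}(f_k)\le W_{\delta d_W,1}(f)$). With any of these repairs your argument is complete; as written, the decisive inequality is unjustified and, at fixed levels, untrue.
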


\begin{proof}
Since Corollary~\ref{cor-Pprop} establishes the property $(P_{1,1})$ defined in~\cite[Definition 6.7]{ABCRST1}, the assertion follows from~\cite[Theorem 6.9]{ABCRST1} and Proposition~\ref{P:W_norm}.
\end{proof}

As a corollary, one deduces the following fractional isoperimetric inequality, which is the global analogue in our  setting of the known  fractional relative isoperimetric inequalities noted  in a Euclidean setting in~\cite{FLS08}, see also \cite{FFMMM}.

\begin{corollary}
 Assume $d_H > \delta d_W$. There exists a constant $C>0$ such that for every $E \subset X$ with finite measure,
\[
\mu (E)^\frac{ d_H-\delta d_W}{d_H} \le C \int_E\int_{X \setminus E}\frac{1}{d(x,y)^{d_H+\delta d_W}}\,d\mu(y)d\mu(x).
\]
\end{corollary}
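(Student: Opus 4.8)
The plan is to derive the corollary from the preceding Sobolev-type inequality by choosing $f=\mathbf{1}_E$, the indicator function of a set $E\subset X$ with $\mu(E)<\infty$. First I would check that $\mathbf{1}_E$ actually lies in $\mathbf{B}^{1,1}(X)$, or equivalently in $\mathcal{W}^{\delta d_W,1}(X)$: this is precisely the requirement that
\[
W_{\delta d_W,1}(\mathbf{1}_E)=\int_X\int_X\frac{|\mathbf{1}_E(x)-\mathbf{1}_E(y)|}{d(x,y)^{d_H+\delta d_W}}\,d\mu(y)\,d\mu(x)<\infty,
\]
so that if the right-hand side of the claimed inequality is infinite there is nothing to prove, and otherwise $f=\mathbf{1}_E\in\mathbf{B}^{1,1}(X)$ and the Sobolev inequality applies.

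Next I would compute both sides for $f=\mathbf{1}_E$. On the left, since $\mathbf{1}_E\in L^q(X,\mu)$ with $q=\frac{d_H}{d_H-\delta d_W}$, we have $\|\mathbf{1}_E\|_{L^q(X,\mu)}=\mu(E)^{1/q}=\mu(E)^{\frac{d_H-\delta d_W}{d_H}}$. On the right, I would observe that $|\mathbf{1}_E(x)-\mathbf{1}_E(y)|$ equals $1$ exactly when one of $x,y$ lies in $E$ and the other in $X\setminus E$, and is $0$ otherwise; splitting the double integral accordingly and using the symmetry of $d(x,y)$ gives
\[
W_{\delta d_W,1}(\mathbf{1}_E)=2\int_E\int_{X\setminus E}\frac{1}{d(x,y)^{d_H+\delta d_W}}\,d\mu(y)\,d\mu(x).
\]
Substituting these two identities into the Sobolev inequality from the previous theorem yields the corollary, with the constant $C$ absorbing the harmless factor $2$.

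The proof is essentially bookkeeping, so I do not anticipate a serious obstacle; the only point requiring a word of care is the well-posedness issue noted above — one should phrase the argument so that the case $W_{\delta d_W,1}(\mathbf{1}_E)=\infty$ is handled trivially (the inequality holds vacuously) and in the remaining case one legitimately invokes the Sobolev inequality, which is stated only for $f\in\mathbf{B}^{1,1}(X)$. It is also worth recalling that the hypothesis $d_H>\delta d_W$ is what makes $q=\frac{d_H}{d_H-\delta d_W}$ a finite exponent strictly larger than $1$, and in particular $\mathbf{1}_E\in L^q(X,\mu)$ because $\mu(E)<\infty$; this is exactly the same standing assumption already imposed in the theorem, so no new conditions are introduced.
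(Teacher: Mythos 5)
Your proposal is correct and is exactly the deduction the paper intends (the paper states the corollary without spelling out a proof): take $f=\mathbf{1}_E$ in the preceding Sobolev inequality, note $\|\mathbf{1}_E\|_{L^q}=\mu(E)^{(d_H-\delta d_W)/d_H}$ and that the Gagliardo-type seminorm equals twice the integral over $E\times(X\setminus E)$, with the infinite right-hand-side case handled trivially. Your care about membership in $\mathbf{B}^{1,1}(X)=\mathcal{W}^{\delta d_W,1}(X)$ (using $\mu(E)<\infty$ for the $L^1$ requirement) is exactly the right well-posedness check.
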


In the case $d_H =\delta d_W$ the situation is different.

\begin{proposition}\label{Sobolev 2}
Assume $d_H =\delta d_W$. Then, $\mathbf{B}^{1,1}(X) \subset L^\infty(X,\mu)$ and there exists a constant $C>0$ such that for every $f \in\mathbf{B}^{1,1}(X)$ and almost every $u,v \in X$,
\[
| f(u)-f(v)| \le C \int_X\int_X\frac{|f(x)-f(y)|}{d(x,y)^{d_H+\delta d_W}}\,d\mu(y)d\mu(x).
\]
\end{proposition}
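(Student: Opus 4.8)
The plan is to exploit the fact that when $d_H = \delta d_W$ the non-local form $\mathcal{W}^{\delta d_W,1}(X) = \mathbf{B}^{1,1}(X)$ sits at the Sobolev-embedding endpoint where the target exponent $q = d_H/(d_H - \delta d_W)$ degenerates to ``$\infty$'', so one expects an $L^\infty$ (indeed Morrey–H\"older) bound rather than an $L^q$ bound. I would run the heat-semigroup argument directly. Fix $f \in \mathbf{B}^{1,1}(X)$, normalize $\int_X\int_X \frac{|f(x)-f(y)|}{d(x,y)^{d_H+\delta d_W}}\,d\mu(y)\,d\mu(x) =: I(f) < \infty$, and write the telescoping identity $f = P^{(\delta)}_{t_0}f + \sum_{k\ge 0}\bigl(P^{(\delta)}_{t_{k+1}}f - P^{(\delta)}_{t_k}f\bigr)$ along a geometric sequence $t_k = 2^{-k}t_0 \to 0^+$, valid $\mu$-a.e.\ since $P^{(\delta)}_t f \to f$ in $L^1$ and along a subsequence a.e. The key estimate is that each increment $P^{(\delta)}_{t_{k+1}}f - P^{(\delta)}_{t_k}f$ is controlled pointwise: using the subordinated heat kernel and $\frac{d}{dt}P^{(\delta)}_t f = (-(-L)^\delta) P^{(\delta)}_t f$, or more elementarily the bound $|P^{(\delta)}_{t}f(x) - P^{(\delta)}_{2t}f(x)| \le \int_X |p^{(\delta)}_t(x,y) - p^{(\delta)}_{2t}(x,y)|\,|f(y)|\,d\mu(y)$ together with the two-sided estimate \eqref{eq:HKE-non-loc}, one bounds this by a constant times $I(f)$ uniformly in $k$ — crucially with a uniform constant (no decay, no growth), because $d_H = \delta d_W$ makes the relevant scaling integral $\int_0^\infty s^{d_H}\Phi(s)\,\frac{ds}{s}$ with exponent exactly at the borderline.

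More precisely, I would estimate $|P^{(\delta)}_{t}f(x) - f(x)|$ or the increments via the lower heat-kernel bound: since $p^{(\delta)}_t(x,y) \ge c_5 t^{-1}(t^{1/\delta d_W} + c_6 d(x,y))^{-d_H - \delta d_W}$ and $d_H = \delta d_W$, the quantity
\[
\int_X \int_X \frac{|f(x)-f(y)|}{(t^{1/\delta d_W} + d(x,y))^{d_H + \delta d_W}}\,d\mu(y)\,d\mu(x) \le C\, t^{-1}\,t^{1/p}\|f\|_{p,1/p}^p\big|_{p=1} \le C\, I(f)
\]
by the lower half of \eqref{E:W_norm_02} / Proposition \ref{P:W_norm}, and this in turn dominates $\int_X |p^{(\delta)}_t(x,y) - p^{(\delta)}_{2t}(x,y)|\,|f(x)-f(y)|\,d\mu(y)\,d\mu(x)$ after integrating in $x$ against a suitable localized kernel. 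Since $p^{(\delta)}_t(x,\cdot)$ has total mass $1$, one writes $P^{(\delta)}_t f(x) - P^{(\delta)}_{2t}f(x) = \int_X\int_X (\cdots)(f(z)-f(y))\,d\mu(z)\,d\mu(y)$ and bounds the resulting double integral by the normalized scaling quantity above, which is $\le C\, I(f)$ for \emph{every} $t$. Summing the (uniformly bounded) increments over $k$ would only give linear-in-$N$ bounds, so the finer point is to sum $|P^{(\delta)}_{t_{k+1}}f(u) - P^{(\delta)}_{t_k}f(u)|$ against a \emph{convergent} series: here I would instead integrate $\frac{d}{dt}P^{(\delta)}_t f(u)$ and use that $\|(-L)^\delta P^{(\delta)}_t f\|$-type quantities, tested pointwise, carry a factor $t^{-1}$ times a piece bounded by $I(f)$, whose $\frac{dt}{t}$-integral over $(0,\infty)$ \emph{diverges} — signalling that the honest statement is the oscillation bound $|f(u)-f(v)| \le C\,I(f)$ (a BMO/Morrey-type conclusion with vanishing modulus when $d(u,v)$ is small replaced by a uniform constant), exactly as in the statement, rather than a quantitative H\"older modulus.

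The cleanest route, and the one I would ultimately take, is to appeal to the abstract machinery already invoked in the previous theorem: Corollary \ref{cor-Pprop} shows $\mathbf{B}^{1,1}(X) = \mathbf{B}^{1,1/p}(X)|_{p=1}$ satisfies property $(P_{1,1})$ of \cite[Definition 6.7]{ABCRST1}, and the borderline Sobolev/Morrey embedding in \cite{ABCRST1} (the endpoint case of \cite[Theorem 6.9]{ABCRST1}, corresponding to $\alpha p = d_H$ in their notation, which here reads $\delta d_W \cdot 1 = d_H$) yields the $L^\infty$ bound together with the a.e.\ oscillation estimate. Thus the structure of the proof is: (i) verify $(P_{1,1})$ via Corollary \ref{cor-Pprop}; (ii) invoke the endpoint case $d_H = \delta d_W$ of the Besov–Sobolev inequality in \cite{ABCRST1}, which in the critical regime gives membership in $L^\infty$ and the $\mu$-a.e.\ bound $|f(u)-f(v)| \le C\,\|f\|_{1,1}$; (iii) translate $\|f\|_{1,1} \simeq W_{\delta d_W,1}(f)$ through Proposition \ref{P:W_norm}. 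The main obstacle I anticipate is (ii): confirming that \cite[Theorem 6.9]{ABCRST1} (or its proof) genuinely covers the endpoint exponent, and if not, supplying the short self-contained dyadic-telescoping argument sketched above, being careful that at $d_H = \delta d_W$ the borderline scaling integral produces the $L^\infty$/oscillation bound rather than blowing up; the rest is routine bookkeeping with \eqref{eq:HKE-non-loc} and Fubini.
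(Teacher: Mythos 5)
Your proposed main route has a genuine gap at exactly the point you flag as the anticipated obstacle. The Sobolev embedding machinery of \cite[Theorem 6.9]{ABCRST1} is used in this paper only under the strict inequality $d_H>\delta d_W$ (where it produces the finite exponent $q=\frac{pd_H}{d_H-\delta d_W}$); at the endpoint $d_H=\delta d_W$ that theorem does not deliver the $L^\infty$/oscillation conclusion, and the paper does not invoke it here. Your fallback, the dyadic telescoping of $f=P^{(\delta)}_{t_0}f+\sum_k\bigl(P^{(\delta)}_{t_{k+1}}f-P^{(\delta)}_{t_k}f\bigr)$, is, as you yourself observe, built from increments that are only \emph{uniformly} bounded by $C\,I(f)$ with no decay in $k$, so the series does not converge and no oscillation bound comes out of it; acknowledging that the divergence ``signals'' an oscillation estimate is not a proof of one. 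So neither branch of your plan actually closes the argument.

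The paper's proof uses a different mechanism, which you may want to note because it is short and self-contained given the earlier results: reduce to $f\ge 0$ and apply the co-area formula of Theorem~\ref{coarea 2}, which gives $\int_0^\infty\|\mathbf{1}_{E_t(f)}\|_{1,1}\,dt\le C\|f\|_{1,1}<\infty$. The key endpoint input is \cite[Corollary 6.6]{ABCRST1}: when $d_H=\delta d_W$ there is $c>0$ such that every set $E$ of positive measure with $\|\mathbf{1}_E\|_{1,1}<\infty$ satisfies $\|\mathbf{1}_E\|_{1,1}\ge c$. Consequently the level set $\Sigma(f)=\{t>0\,:\,\mu(E_t(f))>0\}$ has Lebesgue measure at most $\tfrac{C}{c}\|f\|_{1,1}$, and the layer-cake identity $|f(u)-f(v)|=\int_{\Sigma(f)}|\mathbf{1}_{E_t(f)}(u)-\mathbf{1}_{E_t(f)}(v)|\,dt\le\tfrac1c\int_{\Sigma(f)}\|\mathbf{1}_{E_t(f)}\|_{1,1}\,dt$ yields the a.e.\ oscillation bound, after which Proposition~\ref{P:W_norm} converts $\|f\|_{1,1}$ into the double integral $W_{\delta d_W,1}(f)$. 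In other words, the criticality $d_H=\delta d_W$ enters not through an endpoint Morrey embedding but through a uniform lower bound on the Besov norm of indicators, combined with the co-area formula; if you want to salvage your plan, replacing step (ii) by this indicator-plus-co-area argument is the way to do it.
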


\begin{proof}
Let $f \in \mathbf{B}^{1,1}(X)$. Without loss of generality, we  assume $f \ge 0$ almost everywhere. For almost every $t \ge0 $ we define the set $E_t(f)=\{x\in X\,\colon f(x)>t\}$ as in Section~\ref{ssec-coarea}. Since $d_H =\delta d_W$, according to \cite[Corollary 6.6]{ABCRST1}, there is $c>0$ such that for every set $E$ of positive measure satisfying $\| 1_E \|_{1,1}<+\infty$, one has $\| 1_E \|_{1,1} \ge c$. 
However, from Theorem \ref{coarea 2}, there is $C>0$, such that
\[
\int_0^\infty  \| \mathbf{1}_{E_t(f)} \|_{1,1} \,dt\leq   C \| f \|_{1,1}  <+\infty.
\]
Therefore, the set $\Sigma (f):=\{t>0\,\colon\,\mu( E_t(f)) >0\}$ has finite Lebesgue measure. 
and from Fubini's theorem we obtain
\[
\int_X \int_{\mathbb{R} \setminus \Sigma_f} \mathbf{1}_{E_t(f)}(x) dt d\mu(x) =\int_{\mathbb{R} \setminus \Sigma_f} \mu( E_t(f) ) dt =0
\]
and hence $\int_{\mathbb{R} \setminus \Sigma_f} \mathbf{1}_{E_t(f)}(x) dt=0$ $\mu$-almost everywhere.
Thus, for $\mu$-almost every $u,v \in X$,
\begin{align*}
| f(y)-f(x)| 
=\int_{0}^{+\infty} |\mathbf{1}_{E_t(f)}(x) -\mathbf{1}_{E_t(f)}(y) | dt 
&=\int_{\Sigma (f)} |\mathbf{1}_{E_t(f)}(x) -\mathbf{1}_{E_t(f)}(y) | dt \\
 & \le \frac{1}{c} \int_{\Sigma (f)} \| \mathbf{1}_{E_t(f)} \|_{1,1} dt 
 \le \frac{C}{c} \| f \|_{1,1}
\end{align*}
at which point we apply Proposition~\ref{P:W_norm}.
\end{proof}

\section{$L^p$ theory}\label{S:fractional_Lp}
In the previous section the space of BV functions has been shown to correspond with the space $\mathbf{B}^{1,\alpha}(X)$ at the critical parameter $\alpha=\alpha_1^\#$, c.f.\ Theorem~\ref{L1 critical} and Theorem~\ref{T:BV_char}. In the same spirit, this section analyzes the spaces $\mathbf{B}^{p,\alpha}(X)$ for a generic $p\geq 1$ at their corresponding critical exponent. These will turn out to coincide with fractional Sobolev spaces.

\subsection{$L^p$ critical exponents}
Following the same notation as in~\cite{ABCRST1,ABCRST3}, we define for any  $1\le p<\infty$ the critical exponent 
\begin{align*}
&\alpha_p^{\#}=\sup\{\alpha>0\,\colon\,\mathbf{B}^{p,\alpha}(X)\text{ contains non-constant functions}\}\label{E:alpha_moll_p} 
\end{align*}
and the parameter
\begin{equation*}\label{eq:defnofbetap}
\beta_p= \Bigl(1-\frac{2}{p}\Bigr)\frac{\kappa}{d_W}+\frac{1}{p},
\end{equation*}
where $\kappa>0$ is the H\"older regularity parameter from the Bakry-\'Emery condition~\eqref{E:wBECD}.

\begin{theorem}\mbox{}
One has the following:
\begin{enumerate}[leftmargin=1.75em,label=\rm (\roman*)]
\item If $1 \le p < 2$  then $\frac{1}{2\delta}\leq\alpha_p^\#\leq \min \{ \frac{\beta_p}{\delta} , \frac{1}{p} \} $.
\item If $p \ge 2$ then $\alpha_p^{\#}=\frac{1}{p} $.
\end{enumerate}
\end{theorem}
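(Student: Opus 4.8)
The plan is to transfer the statement to the Korevaar--Schoen scale, where the relevant critical exponents are known, and then to read off $\alpha_p^\#$. Introduce
\[
\sigma_p:=\sup\{\lambda>0\,:\,KS^{\lambda,p}(X)\text{ contains non-constant functions}\},
\]
so that $\sigma_1=d_W-\kappa$ by Assumption~\ref{A3}. By Theorem~\ref{T:KS-Besov}, $\mathbf{B}^{p,\alpha}(X)=KS^{\alpha\delta d_W,p}(X)$ for $0<\alpha<1/p$; by Proposition~\ref{trivial Lp}, $\mathbf{B}^{p,\alpha}(X)=\{0\}$ for $\alpha>1/p$; and Proposition~\ref{P:W_norm} identifies $\mathbf{B}^{p,1/p}(X)=\mathcal{W}^{\delta d_W/p,p}(X)$, which is contained in $KS^{\delta d_W/p,p}(X)$ because $d(x,y)<r$ in the integrand of~\eqref{E:def_KS_norm}, hence is trivial whenever $\delta d_W/p>\sigma_p$. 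A short case analysis on whether $\sigma_p$ lies above or below $\delta d_W/p$ then gives $\alpha_p^\#=\min\{1/p,\,\sigma_p/(\delta d_W)\}$, so it remains to bound $\sigma_p$ on both sides.

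For the lower bound I would fix a non-constant $f\in\mathcal{F}\cap C_c(X)$ --- such a function exists since the form is regular and, by Assumption~\ref{sGKHE}, non-trivial --- with support $K$, and let $K_r$ be the $r$-neighbourhood of $K$. For $r\le1$ the integrand of the double integral in~\eqref{E:def_KS_norm} vanishes off $K_r\subseteq K_1$, and by Ahlfors regularity that integral is comparable to $r^{-\lambda p-d_H}\iint_{\{d(x,y)<r\}}|f(x)-f(y)|^p\,d\mu(y)\,d\mu(x)$. If $p\ge2$, the inequality $|f(x)-f(y)|^p\le(2\|f\|_{L^\infty(X,\mu)})^{p-2}|f(x)-f(y)|^2$ together with the Dirichlet form characterization recalled in Section~\ref{S:Preliminaries} bounds this, for small $r$, by $C\,r^{d_W-\lambda p}$; taking $\limsup_{r\to0^+}$ shows $f\in KS^{\lambda,p}(X)$ for every $\lambda\le d_W/p$, so $\sigma_p\ge d_W/p>\delta d_W/p$ (as $\delta<1$) and hence $\alpha_p^\#=1/p$, which is (ii). If $1\le p<2$, I would instead apply H\"older's inequality with exponent $2/p$ twice --- first in $y$ over $B(x,r)$, then in $x$ over $K_1$ --- to dominate the $p$-energy by a power $p/2$ of the quadratic energy times positive powers of $\mu(B(x,r))\simeq r^{d_H}$ and of $\mu(K_1)$; the exponents combine to leave $C\,r^{p(d_W/2-\lambda)}$, so $f\in KS^{\lambda,p}(X)$ for $\lambda\le d_W/2$ and $\sigma_p\ge d_W/2$. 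Then $\alpha_p^\#=\min\{1/p,\sigma_p/(\delta d_W)\}\ge\min\{1/p,1/(2\delta)\}=1/(2\delta)$, the lower bound in (i). (Taking $p=1$ here also recovers $\kappa\le d_W/2$, equivalently $\beta_p\ge\tfrac12$ for $1\le p<2$, which guarantees $1/(2\delta)\le\beta_p/\delta$.)

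For the upper bound in (i) it remains to prove $\sigma_p\le\beta_p d_W$ for $1\le p<2$, i.e.\ that $KS^{\lambda,p}(X)$ has only constant elements once $\lambda>(1-\tfrac2p)\kappa+\tfrac{d_W}{p}$. I would import the local-setting statement of~\cite{ABCRST3}: the Korevaar--Schoen critical exponent for $p\in[1,2]$ is bounded above by the quantity interpolating linearly in $1/p$ between the endpoint values $\sigma_1=d_W-\kappa$ (Assumption~\ref{A3}) and $\sigma_2=d_W/2$ (the Dirichlet form characterization), and that quantity equals $\beta_p d_W$. Granting it, $\sigma_p/(\delta d_W)\le\beta_p/\delta$, so $\alpha_p^\#=\min\{1/p,\sigma_p/(\delta d_W)\}\le\min\{1/p,\beta_p/\delta\}$, which completes (i).

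The hard part is this last bound $\sigma_p\le\beta_p d_W$. The lower bounds and case (ii) use nothing beyond the Dirichlet form characterization and H\"older's inequality, but the sharp upper bound genuinely needs the H\"older regularity of the heat semigroup encoded in the weak Bakry--\'Emery estimate~\eqref{E:wBECD} (in subordinated form, Lemma~\ref{L:wBECdelta}); the crude inequality $|f(x)-f(y)|^2\le(2\|f\|_{L^\infty(X,\mu)})^{2-p}|f(x)-f(y)|^p$ only yields $\sigma_p\le d_W/p$, which strictly exceeds $\beta_p d_W$ whenever $\kappa<d_W/2$, and bridging that gap is precisely the content imported from~\cite{ABCRST3}. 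Everything else --- the behaviour at the threshold $\alpha=1/p$, handled by Proposition~\ref{trivial Lp} and the inclusion $\mathcal{W}^{\delta d_W/p,p}(X)\subseteq KS^{\delta d_W/p,p}(X)$, and the existence of a non-constant element of $\mathcal{F}\cap C_c(X)$ --- is routine.
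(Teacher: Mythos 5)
Your proposal is correct and, for part (i), follows essentially the paper's own route: both reduce to the Korevaar--Schoen scale via Theorem~\ref{T:KS-Besov} and Proposition~\ref{trivial Lp} and then import the sharp bound on the Korevaar--Schoen critical exponent from \cite[Theorem 3.11]{ABCRST3} (you reprove the lower bound $\sigma_p\ge d_W/2$ by hand with the double H\"older argument, which is fine but covers ground already contained in that citation). For part (ii) you genuinely diverge: the paper deduces $\alpha_p^\#=1/p$ from the density of $\mathbf{B}^{p,1/p}(X)$ in $L^p$ (Corollary~\ref{C:Lp_dense}, itself resting on an $L^p\to L^\infty$ bound and a Riesz--Thorin interpolation argument proved \emph{after} the theorem) together with the monotonicity $\mathbf{B}^{p,1/p}(X)\subseteq\mathbf{B}^{p,\alpha}(X)$ from \cite[Lemma 4.1]{ABCRST1}, whereas you exhibit directly a single non-constant $f\in\mathcal{F}\cap C_c(X)$ lying in $KS^{\lambda,p}(X)$ for all $\lambda\le d_W/p$, using $|f(x)-f(y)|^p\le(2\|f\|_{L^\infty(X,\mu)})^{p-2}|f(x)-f(y)|^2$ together with the $\limsup$ characterization of $\mathcal{E}$ and Ahlfors regularity. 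Your route is more elementary and self-contained (it avoids the forward reference and any interpolation), at the cost of proving only non-triviality, while the paper's route yields the stronger density statement it needs anyway; your explicit treatment of the threshold $\alpha=1/p$ via the inclusion $\mathcal{W}^{\delta d_W/p,p}(X)\subseteq KS^{\delta d_W/p,p}(X)$ plays exactly the role that monotonicity in $\alpha$ plays in the paper. One small caveat: your closing equality $\min\{1/p,1/(2\delta)\}=1/(2\delta)$ holds only when $p\le 2\delta$; when $p>2\delta$ your computation actually gives $\alpha_p^\#=1/p$, and a lower bound of $1/(2\delta)$ cannot hold literally since it would exceed the upper bound $1/p$ --- but this imprecision is present in the theorem's statement (and in the paper's citation-based proof) as well, so it is not a defect attributable to your argument.
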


\begin{proof}
We first notice that from Proposition~\ref{trivial Lp}, one always has $\alpha_p^{\#} \le \frac{1}{p}$. In particular, Theorem~\ref{T:KS-Besov} applies as in the proof of Theorem~\ref{L1 critical} and now~\cite[Theorem 3.11]{ABCRST3} yields $\frac{1}{2\delta}\leq\alpha_p^\#\leq\frac{\beta_p}{\delta}$ when $1\leq p<2$.  
In the case $p\geq 2$, combining Corollary~\ref{C:Lp_dense} with the fact that $\mathbf{B}^{p,1/p}(X)\subseteq \mathbf{B}^{p,\alpha}(X)$ for any $\alpha<\frac{1}{p}$, see~\cite[Lemma 4.1]{ABCRST1}, it follows that $\mathbf{B}^{p,\alpha}(X)$ is dense in $L^p$ and in particular non-trivial for any $\alpha<\frac{1}{p}$, hence $\alpha_p^\#=\frac{1}{p}$.
\end{proof}

\subsection{$\B^{p,1/p}$ is dense in $L^p$ for $p \ge 2$.}
As a consequence of the previous lemma, we can now use the space $\B^{p,1/p}(X)$  to analyze the fractional Sobolev space $\mathcal{W}^{\delta d_W/p,p}(X)$. In particular, we will show this space to be dense in $L^p$.

\begin{lemma}\label{L:Lp-Linf-cont}
Let $1\leq p<\infty$. There exists $C>0$ such that for any $t>0$ and $f\in L^p(X,\mu)$ 
\begin{equation*}
\|P^{(\delta)}_tf\|_{L^\infty(X,\mu)}\leq\frac{C}{t^{d_H/p\delta d_W}}\|f\|_{L^p(X,\mu)},
\end{equation*}
i.e.\ the operator $P^{(\delta)}_t\colon L^p(X,\mu)\to L^\infty(X,\mu)$ is bounded.
\end{lemma}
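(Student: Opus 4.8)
The statement to prove is the $L^p$–$L^\infty$ smoothing estimate for the subordinated semigroup, $\|P^{(\delta)}_tf\|_{L^\infty}\leq Ct^{-d_H/p\delta d_W}\|f\|_{L^p}$.

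The plan is to estimate $P^{(\delta)}_tf(x)$ pointwise by integrating against the heat kernel and applying H\"older's inequality. First I would write, for a.e.\ $x\in X$,
\[
|P^{(\delta)}_tf(x)|\leq\int_X p^{(\delta)}_t(x,y)\,|f(y)|\,d\mu(y)\leq\|f\|_{L^p(X,\mu)}\Big(\int_X p^{(\delta)}_t(x,y)^{p'}\,d\mu(y)\Big)^{1/p'},
\]
where $p'=p/(p-1)$ is the conjugate exponent (and the bound is immediate when $p=1$, using the upper bound on the kernel directly). So the task reduces to showing $\int_X p^{(\delta)}_t(x,y)^{p'}\,d\mu(y)\leq C\,t^{-(p-1)d_H/\delta d_W}$, since then the $1/p'$ power gives exactly $t^{-d_H/p\delta d_W}$.

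Next I would invoke the upper heat kernel bound from~\eqref{eq:HKE-non-loc}, namely $p^{(\delta)}_t(x,y)\leq c_3 t^{-d_H/\delta d_W}\big(1+c_4 d(x,y)t^{-1/\delta d_W}\big)^{-d_H-\delta d_W}$. Raising to the power $p'$ and integrating, one substitutes and uses Ahlfors $d_H$-regularity (Assumption (A2)) to convert the integral over $X$ into a radial integral: decomposing $X$ into annuli $B(x,2^{k+1}t^{1/\delta d_W})\setminus B(x,2^k t^{1/\delta d_W})$, the measure of each annulus is $\simeq (2^k t^{1/\delta d_W})^{d_H}$ and the integrand there is $\simeq t^{-p'd_H/\delta d_W}(1+c_4 2^k)^{-p'(d_H+\delta d_W)}$. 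Summing the geometric-type series in $k$ yields a finite constant times $t^{-p'd_H/\delta d_W}\cdot t^{d_H/\delta d_W}=t^{-(p'-1)d_H/\delta d_W}$; since $p'-1=p'/p$, taking the $1/p'$ root produces $t^{-d_H/p\delta d_W}$ as claimed. The series converges because the exponent $p'(d_H+\delta d_W)>d_H$ for every $p'\geq 1$, so no restriction on parameters is needed. An essentially identical computation already appears implicitly in the proof of Theorem~\ref{T:KS-Besov} (the integrals $\int_0^\infty s^{d_H+\cdots}\Phi(s)\,ds/s$ there), so one may alternatively just cite that bookkeeping.

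I do not expect a serious obstacle here: the only mild care needed is the bookkeeping of exponents (making sure the powers of $t$ combine to the stated $d_H/p\delta d_W$) and noting that the relevant integral $\int_0^\infty (1+c_4 s)^{-p'(d_H+\delta d_W)} s^{d_H}\,ds/s$ is finite precisely because $d_H-p'(d_H+\delta d_W)<0$, which holds for all admissible $p$. The case $p=1$ is trivial (take $p'=\infty$, i.e.\ bound $|P^{(\delta)}_tf(x)|\leq \|f\|_{L^1}\sup_y p^{(\delta)}_t(x,y)\leq C t^{-d_H/\delta d_W}\|f\|_{L^1}$), so the argument is uniform across $1\leq p<\infty$.
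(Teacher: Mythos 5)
Your proof is correct, but it takes a genuinely different route from the paper's. The paper's argument exploits that the subordinated semigroup is conservative, so the kernel $p^{(\delta)}_t(x,\cdot)\,d\mu$ is a probability measure; Jensen/H\"older against that measure gives $|P^{(\delta)}_tf(x)|^p\le\int_X|f(y)|^p\,p^{(\delta)}_t(x,y)\,d\mu(y)$, and then only the on-diagonal bound $p^{(\delta)}_t(x,y)\le Ct^{-d_H/\delta d_W}$ from~\eqref{eq:HKE-non-loc} is needed — a two-line proof that uses neither Ahlfors regularity nor the off-diagonal decay. You instead apply H\"older with exponent $p'$ directly to the kernel and estimate $\int_X p^{(\delta)}_t(x,y)^{p'}\,d\mu(y)$ using the full tail of~\eqref{eq:HKE-non-loc} together with Ahlfors $d_H$-regularity and an annular decomposition; this is more computational but entirely valid, and your observation that $p'(d_H+\delta d_W)>d_H$ always holds correctly shows no parameter restriction arises. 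One small slip to fix: where you state the reduction, the target should read $\int_X p^{(\delta)}_t(x,y)^{p'}\,d\mu(y)\le Ct^{-(p'-1)d_H/\delta d_W}$ rather than $Ct^{-(p-1)d_H/\delta d_W}$ (these agree only for $p=2$); your subsequent computation in fact produces $t^{-(p'-1)d_H/\delta d_W}$, and since $p'-1=p'/p$ the $1/p'$ power gives $t^{-d_H/p\delta d_W}$, so the conclusion stands. The trade-off: the paper's proof is shorter and works whenever the kernel is Markovian with the on-diagonal bound, while yours avoids invoking conservativeness at the price of needing the volume regularity and kernel decay already assumed in the paper.
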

\begin{proof}
Since the semigroup $P_t$ is conservative, H\"older's inequality and the upper heat kernel bound yield
\begin{align*}
|P^{(\delta)}_tf(x)|^p&\leq
\int_X |f(y)|^pp^{(\delta)}_t(x,y)\,d\mu(y)\leq \frac{C}{t^{d_H/\delta d_W}}\|f\|_{L^p(X,\mu)}^p.\qedhere
\end{align*}
\end{proof}

\begin{proposition}
Let $ p\geq 2$. There exists $C>0$ such that for any $t>0$ and $f\in L^p(X,\mu)$ 
\begin{align*}
\| P^{(\delta)}_t f \|_{p,1/p} \le \frac{C}{t^{1/p}} \| f \|_{L^p(X,\mu)}
\end{align*}
\end{proposition}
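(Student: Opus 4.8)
The plan is to replace the Besov seminorm by the equivalent Gagliardo-type seminorm, cut the resulting double integral at the intrinsic scale $R:=t^{1/(\delta d_W)}$, dispose of the long-range part by elementary means, and use the smoothing of $P^{(\delta)}_t$ only in the short-range part. By Proposition~\ref{P:W_norm} (equivalently, taking $\sup_{t>0}$ in~\eqref{E:W_norm_01}) it suffices to prove that, with $g:=P^{(\delta)}_tf$,
\[
W_{\delta d_W/p,\,p}(g)^p=\int_X\int_X\frac{|g(x)-g(y)|^p}{d(x,y)^{d_H+\delta d_W}}\,d\mu(y)\,d\mu(x)\le \frac{C}{t}\,\|f\|_{L^p(X,\mu)}^p .
\]
For the long-range part $d(x,y)\ge R$ I would use $|g(x)-g(y)|^p\le 2^{p-1}(|g(x)|^p+|g(y)|^p)$, Tonelli, the Ahlfors tail bound $\int_{X\setminus B(x,R)}d(x,y)^{-d_H-\delta d_W}\,d\mu(y)\le CR^{-\delta d_W}=Ct^{-1}$, and $\|g\|_{L^p}\le\|f\|_{L^p}$; this contributes $\le Ct^{-1}\|f\|_{L^p}^p$ with no restriction on $t$.

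The short-range part $d(x,y)<R$ is the core. Writing $g=P^{(\delta)}_{t/2}h$ with $h:=P^{(\delta)}_{t/2}f$, the key is the oscillation estimate, valid for $d(x,y)<R$ with $p'$ conjugate to $p$,
\[
|g(x)-g(y)|\le C\Bigl(\frac{d(x,y)}{R}\Bigr)^{\kappa/p'}\bigl(P^{(\delta)}_t(|f|^p)(x)+P^{(\delta)}_t(|f|^p)(y)\bigr)^{1/p}.
\]
To prove it, write $g(x)-g(y)=\int_X\bigl(p^{(\delta)}_{t/2}(x,v)-p^{(\delta)}_{t/2}(y,v)\bigr)h(v)\,d\mu(v)$ and split the $v$-domain into $B(x,2R)$ and its complement. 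Two ingredients do the work: first, the $L^1$ modulus of continuity $\|p^{(\delta)}_{t/2}(x,\cdot)-p^{(\delta)}_{t/2}(y,\cdot)\|_{L^1(\mu)}\le C(d(x,y)/R)^\kappa$, which is Lemma~\ref{L:wBECdelta} in dual form ($\sup$ of $|P^{(\delta)}_{t/2}\varphi(x)-P^{(\delta)}_{t/2}\varphi(y)|$ over $\|\varphi\|_{L^\infty}\le1$); second, the pointwise bound $|P^{(\delta)}_sf(w)|^p\le P^{(\delta)}_s(|f|^p)(w)$ from the proof of Lemma~\ref{L:Lp-Linf-cont}, combined with the kernel comparison $p^{(\delta)}_{t/2}(w,z)\le Cp^{(\delta)}_t(x,z)$ for $w\in B(x,2R)$ supplied by~\eqref{eq:HKE-non-loc}, which yields $\sup_{w\in B(x,2R)}|h(w)|\le C\bigl(P^{(\delta)}_t(|f|^p)(x)\bigr)^{1/p}$. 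On $B(x,2R)$ one bounds the integral by $\sup_{B(x,2R)}|h|$ times the $L^1$ modulus; on the complement one uses H\"older with exponents $p',p$, controlling the $p'$-factor by the $L^1$ modulus and the $p$-factor by $\bigl(\int_X(p^{(\delta)}_{t/2}(x,v)+p^{(\delta)}_{t/2}(y,v))|h(v)|^p\,d\mu(v)\bigr)^{1/p}\le\bigl(P^{(\delta)}_t(|f|^p)(x)+P^{(\delta)}_t(|f|^p)(y)\bigr)^{1/p}$ via $|h|^p\le P^{(\delta)}_{t/2}(|f|^p)$ and the semigroup law. Since $d(x,y)<R$, the $B(x,2R)$-contribution (which carries the larger exponent $\kappa$) is dominated by the complement's, giving the displayed bound.

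Plugging the oscillation estimate into the short-range integral, using $\kappa p/p'=\kappa(p-1)$ and Tonelli, I am left with $CR^{-\kappa(p-1)}\int_XP^{(\delta)}_t(|f|^p)(x)\bigl(\int_{B(x,R)}d(x,y)^{\kappa(p-1)-d_H-\delta d_W}\,d\mu(y)\bigr)d\mu(x)$. Here is the main obstacle: by dyadic decomposition the inner integral is $\le CR^{\kappa(p-1)-\delta d_W}$, but the underlying geometric series converges only when $\kappa(p-1)>\delta d_W$. Granting this, the short-range part is $\le CR^{-\delta d_W}\int_XP^{(\delta)}_t(|f|^p)\,d\mu=CR^{-\delta d_W}\|f\|_{L^p}^p=Ct^{-1}\|f\|_{L^p}^p$ by conservativeness of the semigroup, which finishes the proof for such $p$. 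The condition $\kappa(p-1)>\delta d_W$ holds for all large $p$ but can fail near $p=2$ when $\kappa<\delta d_W$.

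To cover the whole range $p\ge2$ I would treat $p=2$ separately and interpolate. For $p=2$ one has $f\in L^2$, and from $1-e^{-sa}\le sa$ it follows that $\|u\|_{2,1/2}^2\le2\mathcal{E}^{(\delta)}(u,u)$ for $u\in\mathcal{F}^{(\delta)}$, so with $u=P^{(\delta)}_tf$ the spectral theorem gives $\|P^{(\delta)}_tf\|_{2,1/2}^2\le2\|(-L)^{\delta/2}e^{-t(-L)^\delta}f\|_{L^2}^2\le2\bigl(\sup_{\lambda\ge0}\lambda^{\delta/2}e^{-t\lambda^\delta}\bigr)^2\|f\|_{L^2}^2=Ct^{-1}\|f\|_{L^2}^2$. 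Both $\{L^p(X,\mu)\}_p$ and, by Proposition~\ref{Besov non-local 2}(ii), $\{\mathbf{B}^{p,1/p}(X)\}_p=\{\mathfrak{B}^{\delta d_W/p}_{p,p}(X)\}_p$ form complex interpolation scales with endpoints $p=2$ and $p=\infty$; at $p=\infty$ the $\mathfrak{B}^0_{\infty,\infty}$-seminorm is the oscillation $\osc{g}$ and $P^{(\delta)}_t$ is an $L^\infty$-contraction with $\osc{P^{(\delta)}_tg}\le\osc{g}$, so $\|P^{(\delta)}_t\|_{L^\infty\to\mathfrak{B}^0_{\infty,\infty}}\le C$. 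Interpolating the endpoint operator norms $Ct^{-1/2}$ and $C$ with the parameter $\theta$ determined by $1/p=(1-\theta)/2$ produces exactly the rate $t^{-(1-\theta)/2}=t^{-1/p}$. (Alternatively, once the direct argument is available for all $p>1+\delta d_W/\kappa$, it suffices to interpolate between $p=2$ and one such $p$, avoiding the $L^\infty$ endpoint.)
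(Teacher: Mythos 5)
Your long-range/short-range decomposition and the oscillation estimate built from Lemma~\ref{L:wBECdelta} are correct, but, as you yourself flag, they only prove the proposition when $\kappa(p-1)>\delta d_W$, so the whole range $p\ge 2$ rests on your final interpolation step, and that step contains a genuine gap. You assert that $\{\mathbf{B}^{p,1/p}(X)\}_p=\{\mathfrak{B}^{\delta d_W/p}_{p,p}(X)\}_p$ forms a complex interpolation scale between $p=2$ and an $L^\infty$/oscillation endpoint. Nothing in the paper (or in its toolkit) establishes interpolation of these heat-semigroup Besov classes when the smoothness $\delta d_W/p$ and the integrability $p$ vary simultaneously; even in $\mathbb{R}^n$ the diagonal identity $[B^{s_0}_{p_0,p_0},B^{s_1}_{p_1,p_1}]_\theta=B^{s_\theta}_{p_\theta,p_\theta}$ requires Littlewood--Paley retraction machinery that is unavailable in this metric-measure setting, and the endpoint space $\mathfrak{B}^{0}_{\infty,\infty}$ is not even defined here. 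Your proposed alternative (interpolating between $p=2$ and one finite $p$ with $\kappa(p-1)>\delta d_W$) needs exactly the same unproven scale property, so it does not close the gap. A smaller remark: your direct argument uses Assumption~\ref{A3}, which the statement does not actually require.

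The gap is repairable, and the repair is precisely what the paper does: interpolate the \emph{operator}, not the spaces. For fixed $s>0$ one has $\|g\|_{p,1/p}=\sup_{s>0}s^{-1/p}\|\mathcal{P}g\|_{L^p(X\times X,\,p^{(\delta)}_s\mu\otimes\mu)}$ where $\mathcal{P}g(x,y)=g(x)-g(y)$, and your two endpoint bounds say exactly that $f\mapsto\mathcal{P}\bigl(P^{(\delta)}_tf\bigr)$ is bounded from $L^2(X,\mu)$ to $L^2(X\times X,p^{(\delta)}_s\mu\otimes\mu)$ with norm $C(s/t)^{1/2}$ (the spectral estimate you derive, which is \cite[Theorem 5.1]{ABCRST1}) and from $L^\infty(X,\mu)$ to $L^\infty(X\times X,p^{(\delta)}_s\mu\otimes\mu)$ with a uniform constant. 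Since for each fixed $s$ the target is an honest $L^p$ space over a fixed measure, Riesz--Thorin applies and gives the bound $C(s/t)^{1/p}$ on $L^p$; dividing by $s^{1/p}$ and taking the supremum over $s$ yields the proposition for every $p\ge 2$ with no restriction relating $\kappa$, $\delta$ and $d_W$. So your endpoint estimates are the right inputs, but the interpolation must be carried out at the level of these fixed-$s$ weighted $L^p$ spaces rather than at the level of the Besov scale; with that modification your argument becomes essentially the paper's proof, while your short-range/long-range computation is an independent (but only partial-range) alternative.
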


\begin{proof}
By virtue of Proposition~\ref{P:W_norm} and Lemma~\ref{L:Lp-Linf-cont} we have
\begin{align*}
\| P^{(\delta)}_t f \|^p_{p,1/p} & \le C \int_X\int_X\frac{|P^{(\delta)}_t f(x)-P^{(\delta)}_t f(y)|^p}{d(x,y)^{d_H+\delta d_W}} \,d\mu(y)d\mu(x) \\
 & \le  C \int_X\int_X ( |P^{(\delta)}_t f(x)-P^{(\delta)}_t f(y)|^{p-2})\frac{|P^{(\delta)}_t f(x)-P^{(\delta)}_t f(y)|^2}{d(x,y)^{d_H+\delta d_W}} \,d\mu(y)d\mu(x) \\
 & \le C  \| P^{(\delta)}_t f \|_{L^\infty(X,\mu)}^{p-2}\int_X\int_X \frac{|P^{(\delta)}_t f(x)-P^{(\delta)}_t f(y)|^2}{d(x,y)^{d_H+\delta d_W}} \,d\mu(y)d\mu(x)  \\
 & \le \frac{C}{t^{d_H(p-2)/p\delta d_W}}\|f\|^{p-2}_{L^p(X,\mu)} \int_X\int_X \frac{|P^{(\delta)}_t f(x)-P^{(\delta)}_t f(y)|^2}{d(x,y)^{d_H+\delta d_W}} \,d\mu(y)d\mu(x) \\
 & \le \frac{C}{t^{d_H(p-2)/p\delta d_W}}\|f\|^{p-2}_{L^p(X,\mu)} \mathcal{E}^{(\delta)} (P^{(\delta)} _tf,P^{(\delta)} _tf) \\
 & \le\frac{C}{t^{1 +d_H(p-2)/p\delta d_W}}\|f\|^{p-2}_{L^p(X,\mu)} \| f \|^2_{L^2(X,\mu)}.
\end{align*}
Let us now fix a compact set $K\subset X$. The latter inequality applied to $f\mathbf{1}_K$ and H\"older's inequality yield
\begin{equation*}\label{E:PtinBesov_h1}
\| P^{(\delta)}_t f\mathbf{1}_K \|_{p,1/p}\le\frac{C^{1/p}}{t^{\frac{1}{p} +\frac{d_H(p-2)}{p^2\delta d_W}}}\|f\|^{\frac{p-2}{p}}_{L^p(K,\mu)} \| f \|^{\frac{2}{p}}_{L^2(K,\mu)}\leq \frac{C^{1/p}\mu(K)^{\frac{p-2}{p}}}{t^{\frac{1}{p} +\frac{d_H(p-2)}{p^2\delta d_W}}}\|f\|_{L^p(X,\mu)}.
\end{equation*}
Letting $p\to\infty$, it follows that, for any $s>0$, the operator $\mathcal{P}_t^{(\delta)}\colon L^\infty(X,\mu)\to L^\infty(K\times X,p^{(\delta)}_s\mu\otimes\mu)$ defined as $\mathcal{P}_t^{(\delta)}f(x,y)=P_t^{(\delta)}f(x)-P_t^{(\delta)}f(y)$ is bounded by $1$.  Because the bound does not depend on $K$, the same is true for $\mathcal{P}_t^{(\delta)}\colon L^\infty(X,\mu)\to L^\infty(X\times X,p^{(\delta)}_s\mu\otimes\mu)$. On the other hand, we know from~\cite[Theorem 5.1]{ABCRST1} that
\begin{equation*}
\|P_t^{(\delta)}f\|_{2,1/2}\leq \frac{C}{t^{1/2}}\|f\|_{L^2(X,\mu)},
\end{equation*}
i.e. $\mathcal{P}^{(\delta)}_t\colon L^2(X,\mu)\to L^2(X\times X,p_s^{(\delta)}\mu\otimes\mu)$ is bounded by $C(s/t)^{1/2}$. The Riesz-Thorin interpolation theorem now yields that $\mathcal{P}^{(\delta)}_t\colon L^p(X,\mu)\to L^p(X\times X,p_s\mu\otimes\mu)$ is bounded by 
$C(s/t)^{1/p}$, hence
\begin{equation*}
\frac{1}{s^{1/p}}\bigg(\int_X\int_Xp_t^{(\delta)}(x,y)|P_t^{(\delta)}f(x)-P_t^{(\delta)}f(y)|^pd\mu(y)\,d\mu(x)\bigg)^{1/p}\leq \frac{C}{t^{1/p}}\|f\|_{L^p(X,\mu)}.
\end{equation*}
Taking the supremum over $s>0$ on the left hand side yields the result.
\end{proof}
We  conclude by recording the noteworthy consequence of the previous proposition that was the main objective of this section.
\begin{corollary}\label{C:Lp_dense}
For $p \ge 2$, $\B^{p,1/p}(X)=\mathcal{W}^{ \delta d_W/p,p}(X)$ is dense in $L^p(X,\mu)$.
\end{corollary}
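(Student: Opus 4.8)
The plan is to deduce Corollary~\ref{C:Lp_dense} directly from the preceding proposition together with the basic approximation property of the heat semigroup. First, recall from Proposition~\ref{P:W_norm} that $\B^{p,1/p}(X)=\mathcal{W}^{\delta d_W/p,p}(X)$ with equivalent norms, so it suffices to show that $\B^{p,1/p}(X)$ is dense in $L^p(X,\mu)$. The preceding proposition tells us that for every $t>0$ and $f\in L^p(X,\mu)$ one has $P^{(\delta)}_tf\in\B^{p,1/p}(X)$, since $\|P^{(\delta)}_tf\|_{p,1/p}\le Ct^{-1/p}\|f\|_{L^p(X,\mu)}<\infty$. Thus the image $P^{(\delta)}_t\big(L^p(X,\mu)\big)$ is contained in $\B^{p,1/p}(X)$ for each fixed $t>0$.

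Next I would invoke the strong continuity of the subordinated semigroup $\{P^{(\delta)}_t\}_{t>0}$ on $L^p(X,\mu)$: since $\{P_t\}_{t>0}$ is a Markovian (hence $L^p$-contractive and, by Assumption~\ref{A1}, strongly continuous) semigroup for $1\le p<\infty$, the subordination formula~\eqref{eq:Ptdelta} transfers strong continuity to $\{P^{(\delta)}_t\}_{t>0}$; in particular $\|P^{(\delta)}_tf-f\|_{L^p(X,\mu)}\to 0$ as $t\to0^+$ for every $f\in L^p(X,\mu)$. Consequently, given any $f\in L^p(X,\mu)$ and $\varepsilon>0$, choosing $t$ small enough makes $P^{(\delta)}_tf$ an element of $\B^{p,1/p}(X)$ within $\varepsilon$ of $f$ in the $L^p$ norm. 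This establishes density.

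The only genuinely delicate point is justifying the strong continuity of the subordinated semigroup on $L^p$ for $p\ge 2$; since $p\ge 2$ and $\mu$ need not be finite this does not follow from $L^2$ theory alone. I would handle this by the standard argument that for $g$ in a dense class (say bounded functions of compact support, for which $P_sg\to g$ in $L^p$ as $s\to 0^+$ follows from $L^\infty$ and $L^1$ bounds plus the heat kernel estimate~\eqref{eq:HKE-non-loc}), the subordination formula~\eqref{eq:Ptdelta} together with the normalization $\int_0^\infty\eta^{(\delta)}_t(s)\,ds=1$ (from~\eqref{E:subordinator_int} with $\lambda\to0^+$) and the uniform $L^p$-contractivity of $P_s$ gives
\begin{equation*}
\|P^{(\delta)}_tf-f\|_{L^p(X,\mu)}\le\int_0^\infty\eta^{(\delta)}_t(s)\|P_sf-f\|_{L^p(X,\mu)}\,ds,
\end{equation*}
and the right-hand side tends to $0$ as $t\to0^+$ by dominated convergence, using that the mass of $\eta^{(\delta)}_t$ concentrates near $s=0$ as $t\to0^+$. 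A density-plus-contractivity argument then extends this to all $f\in L^p(X,\mu)$. With strong continuity in hand, the density of $\B^{p,1/p}(X)$ in $L^p(X,\mu)$, and hence the corollary, is immediate.
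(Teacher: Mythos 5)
Your proof is correct and takes essentially the same route as the paper's: the published argument likewise applies the preceding proposition to conclude $P^{(\delta)}_t f\in\B^{p,1/p}(X)=\mathcal{W}^{\delta d_W/p,p}(X)$ for every $f\in L^p(X,\mu)$ and then invokes $L^p$ strong continuity of the heat semigroup to get $\|P^{(\delta)}_tf-f\|_{L^p(X,\mu)}\to0$. The only difference is that you spell out the subordination argument for strong continuity, which the paper simply asserts.
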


\begin{proof}
Let $ f\in L^p(X,\mu)$. Then for every $t>0$, $P_t f \in \mathcal{W}^{ \delta d_W/p,p}(X)$ and moreover, by $L^p$ strong continuity of the heat semigroup one has $\| P_t f - f \|_{L^p(X,\mu)} \to 0$ when $t \to 0$.
\end{proof}

\subsection{Sobolev inequalities}
The following Sobolev inequality is available in this setting.

\begin{theorem}
Let $p \ge 1$. Assume $d_H > \delta d_W$. There is  $C>0$ such that for every $f \in \mathbf{B}^{p,1/p}(X)$,
\[
\| f \|_{L^q(X,\mu)} \le C\left( \int_X\int_X\frac{|f(x)-f(y)|^p}{d(x,y)^{d_H+\delta d_W}}\,d\mu(y)d\mu(x)\right)^{1/p} ,
\]
where $q=\frac{pd_H}{ d_H-\delta d_W}$.
\end{theorem}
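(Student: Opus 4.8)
The plan is to reduce the Sobolev inequality for $\mathbf{B}^{p,1/p}(X)=\mathcal{W}^{\delta d_W/p,p}(X)$ to the abstract Sobolev inequality already available for Besov classes at the critical exponent in \cite{ABCRST1}. Concretely, Corollary~\ref{cor-Pprop} shows that every $f\in\mathbf{B}^{p,1/p}(X)$ satisfies property $(P_{p,1/p})$ from \cite[Definition~6.7]{ABCRST1}, namely that the supremum over $t>0$ in the definition of $\|f\|_{p,1/p}$ is controlled by the $\liminf_{t\to0^+}$ of the same quantity. Under the assumption $d_H>\delta d_W$ one is in the regime where the heat-kernel-based Besov space embeds into $L^q$ with $q$ determined by the scaling $\frac1q=\frac1p-\frac{\text{(gain)}}{d_H}$; here the relevant smoothness gain associated with the seminorm $\|\cdot\|_{p,1/p}$ is $\delta d_W\cdot\frac1p\cdot p=\delta d_W$ in the appropriate normalization, producing the stated exponent $q=\frac{pd_H}{d_H-\delta d_W}$. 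Thus the first step is to invoke \cite[Theorem~6.9]{ABCRST1} (the Besov--Sobolev inequality valid for spaces with property $(P_{p,1/p})$ under the Ahlfors regularity and sub-Gaussian heat kernel assumptions) to get
\[
\|f\|_{L^q(X,\mu)}\le C\,\|f\|_{p,1/p}.
\]

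The second step is to translate the right-hand side from the semigroup seminorm $\|f\|_{p,1/p}$ into the Gagliardo-type expression appearing in the statement. This is exactly the content of Proposition~\ref{P:W_norm}, which gives the two-sided equivalence $\|f\|_{p,1/p}\simeq W_{p,\delta d_W/p}(f)=\big(\int_X\int_X\frac{|f(x)-f(y)|^p}{d(x,y)^{d_H+\delta d_W}}\,d\mu(y)\,d\mu(x)\big)^{1/p}$, and in particular $\mathbf{B}^{p,1/p}(X)=\mathcal{W}^{\delta d_W/p,p}(X)$ with comparable norms. Combining the two displays yields
\[
\|f\|_{L^q(X,\mu)}\le C\,W_{p,\delta d_W/p}(f)=C\left(\int_X\int_X\frac{|f(x)-f(y)|^p}{d(x,y)^{d_H+\delta d_W}}\,d\mu(y)\,d\mu(x)\right)^{1/p},
\]
which is precisely the claimed inequality. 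One should double-check that the normalization of the smoothness parameter in \cite[Theorem~6.9]{ABCRST1} matches our convention: there the critical index is $\alpha=1/p$ and the scaling factor in the heat semigroup for $P^{(\delta)}_t$ is $t^{1/(\delta d_W)}$ in the space variable, so the effective "dimension-shifted" smoothness is $\delta d_W/p$ per unit, and the Sobolev conjugate works out to $q=pd_H/(d_H-\delta d_W)$ exactly when $d_H>\delta d_W$ keeps this positive and finite.

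The case $p=1$ is already covered by the earlier isoperimetric theorem in Section~\ref{S:BVandFL} (with the same $q=d_H/(d_H-\delta d_W)$), but it also follows from the present argument since property $(P_{1,1})$ was established in Corollary~\ref{cor-Pprop}; so the statement is uniform in $p\ge1$. The main obstacle, such as it is, is bookkeeping rather than genuine difficulty: one must verify that the hypotheses of \cite[Theorem~6.9]{ABCRST1} apply to the subordinated Dirichlet form $(\mathcal{E}^{(\delta)},\mathcal{F}^{(\delta)})$ — i.e.\ that the heat kernel bounds \eqref{eq:HKE-non-loc} are of the form required there (polynomially decaying rather than sub-Gaussian, with effective walk dimension $\delta d_W$) and that Ahlfors $d_H$-regularity is preserved — and that the resulting Sobolev exponent is stated with our conventions. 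No new geometric assumption beyond $d_H>\delta d_W$ is needed, since the argument runs entirely through the already-proven property $(P_{p,1/p})$ and Proposition~\ref{P:W_norm}.
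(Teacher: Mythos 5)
Your proposal is correct and follows essentially the same route as the paper: establish property $(P_{p,1/p})$ via Corollary~\ref{cor-Pprop}, invoke \cite[Theorem 6.9]{ABCRST1} using the assumption $\frac{d_H}{\delta d_W}>1\geq\frac{1}{p}$, and convert the semigroup seminorm to the Gagliardo-type expression through Proposition~\ref{P:W_norm}. The only difference is that you spell out the norm-translation and exponent bookkeeping more explicitly than the paper does, which is fine.
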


\begin{proof}
In Corollary~\ref{cor-Pprop} we saw that the condition $(P_{p,1/p})$  of~\cite[Definition 6.7]{ABCRST1} holds. By assumption $\frac{d_H}{\delta d_W}>1\geq \frac{1}{p}$, hence~\cite[Theorem 6.9]{ABCRST1} yields the desired inequality.
\end{proof}

\section{Putting things in perspective: A discussion on isoperimetric and Sobolev inequalities for non-local Dirichlet spaces}\label{S:final}
Although there are some versions of Poincar\'e type inequalities in the context of non-local Dirichlet forms, see e.g.~\cite{OS-CT08,CW17}, 
the standard ones do not make sense here.
In contrast, it is meaningful to ask for some global Sobolev type inequality: namely, whether there exist constants $C>0$ and $\kappa\ge 1$ such that
for every $f\in\mathcal{F}^{(\delta)}$,
\begin{equation}\label{E:GlobalSobolev}
\left(\int_X|f|^{2\kappa}\, d\mu\right)^{\frac{1}{2\kappa}}
  \le C \sqrt{\mathcal{E}^{(\delta)}(f,f)}.
\end{equation}
In this section we explore a condition under which the above inequality holds. 
Recall that a Markovian semigroup $\{P_t\}_{t>0}$ is called transient, see e.g.~\cite[p.38]{FOT94} if there exists an almost-everywhere
positive $f\in L^1(X,\mu)$ such that 
\[
\int_0^\infty P_tf\, dt<\infty
\]
$\mu$-a.e. in $X$, where $P_tf$ denotes as before the heat semigroup given by
\[
P_tf(x)=\int_X p_t(x,y)\, f(y)\, d\mu(y).
\]
Thus transience of the semigroup $\{P_tf\}_{t>0}$ thus implies that it decays fast as $t\to \infty$ almost everywhere in $X$. For the rest of this section we will assume that the semigroup $\{P^{(\delta)}_t\}_{t>0}$ is transient and strongly continuous. 
According to~\cite[Corollary 5.4]{bogdan03}, this happens if and only if 
\[
0<\delta<\min\{1, d_H/d_W\};
\]
see also~\cite{bogdan02} and~\cite[Remark 4.6]{Kum04}. Note that this condition is natural because it means that the transience is equivalent to the condition $d_S^{(\delta)}:=\frac{2d_W}{\delta d_H}>2$, where $d_S^{(\delta)}$ is the spectral dimension of the subordinated process. 
In this case, we have the following classical result that gives a first version of a Sobolev inequality.

\begin{lemma}[\protect{\cite[Theorem~1.5.3]{FOT94} or \cite[Theorem 2.1.5]{CF12}}]\label{lem:g-Sobolev}
Let $(\mathcal{E}^{(\delta)},\mathcal{F}^{(\delta)})$ be the Dirichlet form associated with $\{P^{(\delta)}_t\}_{t>0}$. The strongly continuous semigroup $\{P^{(\delta)}_t\}_{t>0}$ is 
transient if and only if 
there exists a bounded function $g\in L^1(X,\mu)$ that is strictly positive $\mu$-a.e. on $X$ such that 
\[
\int_X|f|\, g\, d\mu\le \sqrt{\mathcal{E}^{(\delta)}(f,f)}\qquad\forall\,f\in\mathcal{F}^{(\delta)}.
\]
\end{lemma}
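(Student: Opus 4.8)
The plan is to deduce this from the general theory of transient Dirichlet forms, which applies verbatim since the subordinated form $(\mathcal{E}^{(\delta)},\mathcal{F}^{(\delta)})$ is, as recorded around~\eqref{E:Edelta}, a regular Dirichlet form. The central object is the $0$-order resolvent (Green operator) $G:=\int_0^\infty P^{(\delta)}_t\,dt$, together with its $\alpha$-order truncations $G_\alpha:=\int_0^\infty e^{-\alpha t}P^{(\delta)}_t\,dt$, which satisfy $G_\alpha u\in\mathcal{F}^{(\delta)}$ and $\mathcal{E}^{(\delta)}_\alpha(G_\alpha u,v)=(u,v)_{L^2}$ for $u\in L^2(X,\mu)$, $v\in\mathcal{F}^{(\delta)}$, where $\mathcal{E}^{(\delta)}_\alpha:=\mathcal{E}^{(\delta)}+\alpha(\cdot,\cdot)_{L^2}$.

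For the implication that the functional inequality forces transience, I would apply the hypothesis to $f=G_\alpha g\ge 0$ (which lies in $L^2$ since $g$ is bounded and integrable, hence in $L^2$). Using $\mathcal{E}^{(\delta)}(G_\alpha g,G_\alpha g)\le\mathcal{E}^{(\delta)}_\alpha(G_\alpha g,G_\alpha g)=(g,G_\alpha g)_{L^2}$ gives
\[
\int_X (G_\alpha g)\,g\,d\mu\le\Big(\int_X (G_\alpha g)\,g\,d\mu\Big)^{1/2},
\]
so $\int_X (G_\alpha g)\,g\,d\mu\le1$ uniformly in $\alpha>0$. Letting $\alpha\downarrow0$ and invoking monotone convergence yields $\int_X (Gg)\,g\,d\mu\le1$, hence $Gg<\infty$ $\mu$-a.e.; since $g\in L^1(X,\mu)$ is strictly positive $\mu$-a.e., this is exactly the transience of $\{P^{(\delta)}_t\}_{t>0}$.

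For the converse, assume transience. The first step is to produce a good reference function: a bounded, $\mu$-a.e. strictly positive $g_0\in L^1(X,\mu)$ with $Gg_0\le1$ $\mu$-a.e., obtained by the standard truncation argument (cf.~\cite[Lemma~1.5.1]{FOT94}) starting from an a.e.-positive $h\in L^1$ with $Gh<\infty$ a.e. The theory of extended Dirichlet spaces then shows that the potential $Gg_0$ belongs to $\mathcal{F}_e^{(\delta)}$, satisfies $\mathcal{E}^{(\delta)}(Gg_0,v)=(g_0,v)_{L^2}$ for all $v\in\mathcal{F}_e^{(\delta)}$, and $\mathcal{E}^{(\delta)}(Gg_0,Gg_0)=(g_0,Gg_0)_{L^2}\le\|g_0\|_{L^1}<\infty$. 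Taking $v=|f|\in\mathcal{F}_e^{(\delta)}$ for $f\in\mathcal{F}^{(\delta)}$, the Cauchy--Schwarz inequality for $\mathcal{E}^{(\delta)}$ together with the Markovian contraction $\mathcal{E}^{(\delta)}(|f|,|f|)\le\mathcal{E}^{(\delta)}(f,f)$ gives
\[
\int_X|f|\,g_0\,d\mu=\mathcal{E}^{(\delta)}(Gg_0,|f|)\le\mathcal{E}^{(\delta)}(Gg_0,Gg_0)^{1/2}\,\mathcal{E}^{(\delta)}(f,f)^{1/2}\le\|g_0\|_{L^1}^{1/2}\,\mathcal{E}^{(\delta)}(f,f)^{1/2}.
\]
Rescaling $g:=cg_0$ with $c^2\|g_0\|_{L^1}\le1$ (which preserves boundedness, integrability and a.e. positivity) then yields a $g$ satisfying the stated inequality.

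The step I expect to be the main obstacle is the converse direction's reliance on the extended Dirichlet space machinery: verifying that $Gg_0$ is a genuine element of $\mathcal{F}_e^{(\delta)}$ acting as a $0$-order potential, i.e. that $\mathcal{E}^{(\delta)}(Gg_0,v)=(g_0,v)_{L^2}$ holds on all of $\mathcal{F}_e^{(\delta)}$, along with the construction of $g_0$ with the correct integrability. Everything else reduces to Cauchy--Schwarz, monotone convergence, and the Markovian property. As this is entirely classical, one may alternatively simply invoke~\cite[Theorem~1.5.3]{FOT94} or~\cite[Theorem~2.1.5]{CF12}.
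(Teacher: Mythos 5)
The paper itself gives no argument for this lemma; it is quoted directly from \cite[Theorem~1.5.3]{FOT94} and \cite[Theorem~2.1.5]{CF12}, so the authors' ``proof'' is exactly the citation you offer as a fallback, and your sketch is essentially the classical proof from those references. Your direction ``inequality $\Rightarrow$ transience'' (apply the hypothesis to $f=G_\alpha g\ge 0$, use $\mathcal{E}^{(\delta)}(G_\alpha g,G_\alpha g)\le\mathcal{E}^{(\delta)}_\alpha(G_\alpha g,G_\alpha g)=(g,G_\alpha g)_{L^2}$, conclude $(g,G_\alpha g)_{L^2}\le 1$ and let $\alpha\downarrow 0$ by monotone convergence) is verbatim the textbook argument and is correct. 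On the converse, two caveats. First, the standard truncation of \cite[Lemma~1.5.1]{FOT94} yields a bounded, a.e.\ strictly positive $g_0\in L^1(X,\mu)$ with the \emph{integrated} bound $\int_X g_0\, Gg_0\,d\mu\le 1$, not the pointwise bound $Gg_0\le 1$ that you assert (the pointwise version needs an extra domination-principle argument); this costs you nothing, since the integrated bound already gives $\mathcal{E}^{(\delta)}(Gg_0,Gg_0)=(g_0,Gg_0)_{L^2}\le 1$ and then your Cauchy--Schwarz step goes through with no rescaling at all. Second, the step you yourself flag as the obstacle is where a real circularity lurks: in \cite{FOT94} the Hilbert structure of the extended Dirichlet space and the potential theorem asserting $Gg_0\in\mathcal{F}^{(\delta)}_e$ with $\mathcal{E}^{(\delta)}(Gg_0,v)=(g_0,v)$ are themselves derived from the inequality being proved, so invoking them here is not a self-contained proof. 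The classical route avoids $\mathcal{F}^{(\delta)}_e$ entirely: for $f\in\mathcal{F}^{(\delta)}$ write $\int_X|f|\,g_0\,d\mu=\mathcal{E}^{(\delta)}_\alpha(G_\alpha g_0,|f|)\le (g_0,G_\alpha g_0)_{L^2}^{1/2}\,\mathcal{E}^{(\delta)}_\alpha(|f|,|f|)^{1/2}\le\mathcal{E}^{(\delta)}_\alpha(|f|,|f|)^{1/2}$, let $\alpha\downarrow0$, and use the Markovian contraction $\mathcal{E}^{(\delta)}(|f|,|f|)\le\mathcal{E}^{(\delta)}(f,f)$. With that substitution your argument coincides with the proof in the cited sources.
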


While the conclusion of the above lemma looks remarkably like the desired Sobolev inequality~\eqref{E:GlobalSobolev}, the problem here is that we do not have good control over $g$. Moreover, we would like to estimate the $L^q(X,\mu)$-norm of $f$ for some $q>0$, not just the $L^1(X, g\, d\mu)$-norm. To circumvent this issue, we will introduce a capacitary condition on the underlying space $X$.

\medskip

Given a compact set $K\subset X$, one defines the variational capacity of $K$ as 
\[
\text{Cap}_0(K):=\inf\{ \mathcal{E}^{(\delta)}(f,f)\;\colon\;f\in\mathcal{F}^{(\delta)}\cap C_0(X)\text{ such that }f(x)\geq 1\;\forall\,x\in X\},
\]
where $C_0(X)$ denotes the collection of all compactly supported continuous functions on $X$, see~\cite[Section~2.4]{FOT94}. 
Under transience, the following capacitary type inequality holds.

\begin{lemma}[\protect{\cite[Lemma~2.4.1]{FOT94}}]
For any $f\in\mathcal{F}^{\delta}\cap C_0(X)$ and $t>0$
\[
\int_0^\infty 2t\, \text{\rm Cap}_0(E_t(|f|)\, dt\le 4\, \mathcal{E}^{(\delta)}(f,f),
\]
where $E_t(|f|):=\{x\in X\,\colon\,|f(x)|>t\}$.
\end{lemma}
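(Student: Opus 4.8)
This is the capacitary strong-type (Maz'ya-type) inequality of \cite[Lemma~2.4.1]{FOT94}, and the plan is to reproduce its proof in the present notation. The scheme is: reduce to non-negative $f$; split $f$ into dyadic truncation layers; bound the capacity of each super-level set by the normalized energy of the corresponding layer; sum using sub-additivity of the Dirichlet energy over the layers; and recombine using monotonicity of $t\mapsto\mathrm{Cap}_0(E_t)$. First I would reduce to the case $f\ge 0$: since $(\mathcal{E}^{(\delta)},\mathcal{F}^{(\delta)})$ is Markovian, $|f|\in\mathcal{F}^{(\delta)}\cap C_0(X)$ and $\mathcal{E}^{(\delta)}(|f|,|f|)\le\mathcal{E}^{(\delta)}(f,f)$, so it suffices to prove the estimate for $|f|$; we therefore assume $f\ge 0$. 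Fix the dyadic levels $t_k=2^k$, $k\in\mathbb Z$, which tile $(0,\infty)$ by the intervals $(t_{k-1},t_k]$, and set the truncation layers $f_k:=\bigl((f-2^{k-1})_+\bigr)\wedge 2^{k-1}$, so that $f=\sum_{k\in\mathbb Z}f_k$ pointwise. Each $f_k$ is a normal contraction of $f$ vanishing wherever $f\le 2^{k-1}$, hence $f_k\in\mathcal{F}^{(\delta)}\cap C_c(X)\subset\mathcal{F}^{(\delta)}\cap C_0(X)$.

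The two ingredients are then the following. (i) On $E_{2^k}(f)=\{f>2^k\}$ one has $f_k\equiv 2^{k-1}$, so $2^{-(k-1)}f_k$ is admissible in the definition of $\mathrm{Cap}_0\bigl(E_{2^k}(f)\bigr)$, and therefore $2^{2k}\,\mathrm{Cap}_0\bigl(E_{2^k}(f)\bigr)\le 4\,\mathcal{E}^{(\delta)}(f_k,f_k)$. (ii) The sub-additivity $\sum_{k\in\mathbb Z}\mathcal{E}^{(\delta)}(f_k,f_k)\le\mathcal{E}^{(\delta)}(f,f)$. For (ii) one invokes the Beurling--Deny representation of the Dirichlet form: being obtained by subordinating a conservative strongly local form by a drift-free $\delta$-stable subordinator with $0<\delta<1$, $\mathcal{E}^{(\delta)}$ has neither killing nor local part, so $\mathcal{E}^{(\delta)}(u,u)=\iint_{X\times X}(u(x)-u(y))^2\,J^{(\delta)}(dx,dy)$; since the layers are stacked ($f_k$ is a non-decreasing function of $f$ for every $k$), $\sum_k|f_k(x)-f_k(y)|=|f(x)-f(y)|$ for $\mu\times\mu$-a.e.\ $(x,y)$, whence $\sum_k|f_k(x)-f_k(y)|^2\le|f(x)-f(y)|^2$, and integrating against $J^{(\delta)}$ yields (ii). (The doubly infinite sum causes no trouble: the partial sums $\sum_{|k|\le M}f_k$ converge to $f$ in $\mathcal{F}^{(\delta)}$, and monotone convergence gives the full inequality.)

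Combining (i) and (ii) gives $\sum_{k\in\mathbb Z}2^{2k}\,\mathrm{Cap}_0\bigl(E_{2^k}(f)\bigr)\le 4\,\mathcal{E}^{(\delta)}(f,f)$. Since $t\mapsto\mathrm{Cap}_0(E_t(f))$ is non-increasing, bounding it on $(2^{k-1},2^k]$ by its value at $2^{k-1}$ and using $2^{2k}-2^{2(k-1)}=3\cdot 2^{2(k-1)}$ yields
\[
\int_0^\infty 2t\,\mathrm{Cap}_0\bigl(E_t(f)\bigr)\,dt\;\le\;12\,\mathcal{E}^{(\delta)}(f,f),
\]
which is the desired inequality up to a universal constant; replacing the dyadic mesh by a continuously varying family of layers $\ell_a=(f-a)_+\wedge a$ (admissible, after normalization, for $E_{2a}(f)$) and optimizing the resulting integral recovers the sharp constant $4$ of \cite[Lemma~2.4.1]{FOT94}.

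The main obstacle is step (ii), the sub-additivity of the Dirichlet energy over the truncation layers: this is exactly where the Markovian property enters essentially, and making it rigorous at the level of the abstract form --- rather than via the heat-kernel-based expression~\eqref{E:Edelta}, which holds only up to multiplicative constants --- requires the Beurling--Deny decomposition and the fact that $\mathcal{E}^{(\delta)}$ is of pure-jump type. The remaining steps (admissibility of the normalized layers, the dyadic bookkeeping, and squeezing the constant down to $4$) are routine.
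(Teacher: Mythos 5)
Your truncation scheme is fine as far as it goes, and in fact it is more robust than you suggest: the layers $f_k=(f-2^{k-1})_+\wedge 2^{k-1}$ are normal contractions of $f$ lying in $\mathcal{F}^{(\delta)}\cap C_c(X)$, the admissibility of $2^{-(k-1)}f_k$ for $E_{2^k}(f)$ is correct (modulo the routine passage from compact subsets of the open set $E_{2^k}(f)$ to the set itself), and the layer sub-additivity $\sum_k\mathcal{E}^{(\delta)}(f_k,f_k)\le\mathcal{E}^{(\delta)}(f,f)$ holds for \emph{every} Dirichlet form via the approximating forms $\frac1t(f-P^{(\delta)}_tf,f)_{L^2}$, whose jump-plus-killing representation makes the stacked-layer inequality pointwise; you do not actually need the Beurling--Deny decomposition or the pure-jump structure of the subordinated form. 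This correctly yields the capacitary strong-type inequality with constant $12$. Note also that the paper itself gives no argument here: the lemma is quoted from \cite[Lemma~2.4.1]{FOT94}, so your proposal has to be measured against that proof.

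The genuine gap is your final step, the claim that the constant can be pushed from $12$ down to the stated $4$ by passing to a continuous family of layers $\ell_a=(f-a)_+\wedge a$ and ``optimizing.'' No mechanism is given, and the one implicit in your scheme cannot deliver it: the sub-additivity you rely on uses essentially that the dyadic layers are disjoint and stacked, i.e.\ $\sum_k|f_k(x)-f_k(y)|=|f(x)-f(y)|$, and there is no analogue of this identity for a continuum of overlapping layers; moreover, optimizing the mesh ratio $\lambda$ inside your scheme only improves the constant to $\min_{\lambda>1}\lambda^2(\lambda+1)/(\lambda-1)\approx 11.1$ (attained at the golden ratio), nowhere near $4$. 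The constant $4$ in \cite[Lemma~2.4.1]{FOT94} comes from a genuinely different argument that uses transience: with $F_t=\{|f|\ge t\}$ one takes the $0$-order equilibrium potentials $e_t$ and equilibrium measures $\nu_t$, uses $\mathcal{E}^{(\delta)}(f,e_t)=\int \tilde f\,d\nu_t\ge t\,\mathrm{Cap}_0(F_t)$ and $\mathcal{E}^{(\delta)}(e_s,e_t)\le \mathrm{Cap}_0(F_{\max(s,t)})$, and applies Cauchy--Schwarz to $w=\int_0^\infty 2e_t\,dt$, which gives $S\le \mathcal{E}^{(\delta)}(f,f)^{1/2}(4S)^{1/2}$ for $S=\int_0^\infty 2t\,\mathrm{Cap}_0(F_t)\,dt$ and hence $S\le 4\,\mathcal{E}^{(\delta)}(f,f)$. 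So what you have proved is the estimate with a worse universal constant---sufficient for the purely qualitative use the paper makes of the lemma, but not the statement as written---and the recovery of the constant $4$ is the missing idea.
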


The above lemma indicates that there are plenty of compact sets in $X$ with finite capacity. 
Given the above notion of capacity, the next theorem identifies a property on the Dirichlet form under which we have the desired Sobolev inequality.

\begin{theorem}[\protect{\cite[Theorem~2.4.1]{FOT94}}]\label{T:isoperim-nonlocal}
Assume that there is some $\kappa\ge 1$ and $\Theta>0$ such that 
\begin{equation}\label{eq:isoperim-nonlocal}
\mu(K)^{1/\kappa}\le \Theta\, \text{\rm Cap}_0(K)
\end{equation}
for any compact $K\subset X$. Then, there exists a constant $C>0$ such that
\begin{equation}\label{E:Sobolev-nonlocal}
\left(\int_X|f|^{2\kappa}\, d\mu\right)^{1/2\kappa}\le C\, \sqrt{\mathcal{E}^{(\delta)}(f,f)}
\end{equation}
for any $f\in\mathcal{F}^{(\delta)}$.
\end{theorem}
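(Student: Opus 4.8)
The plan is to derive the Sobolev inequality \eqref{E:Sobolev-nonlocal} from the capacitary hypothesis \eqref{eq:isoperim-nonlocal} by combining the capacitary strong-type inequality of the preceding lemma with the coarea decomposition at the level of superlevel sets. First I would reduce to $f\in\mathcal{F}^{(\delta)}\cap C_0(X)$, which is dense in $\mathcal{F}^{(\delta)}$ in the $\mathcal{E}^{(\delta)}_1$-norm by regularity (Assumption (A3)); and by replacing $f$ with $|f|$ it suffices to treat $0\le f\in\mathcal{F}^{(\delta)}\cap C_0(X)$, since $\mathcal{E}^{(\delta)}(|f|,|f|)\le\mathcal{E}^{(\delta)}(f,f)$ by the Markovian property.

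Next, writing $E_t=E_t(f)=\{x\colon f(x)>t\}$, I would use the layer-cake identity $f(x)^{2\kappa}=2\kappa\int_0^\infty t^{2\kappa-1}\mathbf{1}_{E_t}(x)\,dt$ to get
\[
\Bigl(\int_X f^{2\kappa}\,d\mu\Bigr)^{1/(2\kappa)}=\Bigl(2\kappa\int_0^\infty t^{2\kappa-1}\mu(E_t)\,dt\Bigr)^{1/(2\kappa)}.
\]
The function $t\mapsto\mu(E_t)$ is nonincreasing, so a standard rearrangement/Hardy-type manipulation (equivalently, the one-dimensional inequality $\|g\|_{L^{2\kappa}(t^{2\kappa-1}dt)}\lesssim\|h\|_{L^1(dt)}$ when $g(t)=\mu(E_t)^{1/(2\kappa)}$ and $h(t)=2t\,\mu(E_t)^{1/\kappa}\cdot(\text{suitable weight})$) lets me bound the left side by a constant times $\int_0^\infty 2t\,\mu(E_t)^{1/\kappa}\,dt$. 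Actually the cleanest route is: apply the capacitary hypothesis pointwise in $t$ to the compact set $E_t$ (note $f$ is continuous with compact support, so $\overline{E_t}$ is compact and $f\wedge t$ normalizes it), giving $\mu(E_t)^{1/\kappa}\le\Theta\,\mathrm{Cap}_0(\overline{E_t})$; then feed this into the one-dimensional reduction together with the capacitary strong-type inequality $\int_0^\infty 2t\,\mathrm{Cap}_0(E_t(|f|))\,dt\le 4\,\mathcal{E}^{(\delta)}(f,f)$ from the lemma just above.

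The remaining point is the one-dimensional interpolation making $\bigl(\int_0^\infty t^{2\kappa-1}\mu(E_t)\,dt\bigr)^{1/(2\kappa)}\lesssim\int_0^\infty t\,\mu(E_t)^{1/\kappa}\,dt$: this is where I expect the only real work, and it is the classical argument (see \cite[Theorem 2.4.1]{FOT94}) that for a nonincreasing nonnegative $m(t)$ one has $\bigl(\int_0^\infty t^{2\kappa-1}m(t)\,dt\bigr)^{1/(2\kappa)}\le C_\kappa\int_0^\infty t\,m(t)^{1/\kappa}\,dt$, proved by dyadic decomposition $t\in[2^j,2^{j+1})$ and the elementary inequality $\sum a_j\le(\sum a_j^{1/(2\kappa)})^{2\kappa}$ valid since $1/(2\kappa)\le 1$, using monotonicity of $m$ to compare $\int_{2^j}^{2^{j+1}}t^{2\kappa-1}m(t)\,dt\simeq 2^{2\kappa j}m(2^j)$ against $\int_{2^j}^{2^{j+1}}t\,m(t)^{1/\kappa}\,dt\simeq 2^{2j}m(2^{j+1})^{1/\kappa}$. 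Chaining these three ingredients — layer cake, the dyadic one-dimensional inequality, the capacitary hypothesis applied levelwise, and the capacitary strong-type lemma — yields \eqref{E:Sobolev-nonlocal} with $C=C_\kappa\cdot 4\Theta$; finally I pass from $\mathcal{F}^{(\delta)}\cap C_0(X)$ to all of $\mathcal{F}^{(\delta)}$ by the density and Fatou's lemma. The main obstacle is purely the bookkeeping in the dyadic estimate and making sure the constant $C_\kappa$ is finite for every $\kappa\ge 1$; there is no genuine functional-analytic difficulty beyond what the cited lemmas already provide.
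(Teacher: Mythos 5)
The paper offers no proof of this statement---it is quoted verbatim from \cite[Theorem~2.4.1]{FOT94}---and your strategy (reduce to nonnegative $f\in\mathcal{F}^{(\delta)}\cap C_0(X)$, layer-cake over superlevel sets, apply \eqref{eq:isoperim-nonlocal} levelwise to the compact sets $\overline{E_t(f)}$, and feed the result into the capacitary strong-type lemma through a one-dimensional inequality for nonincreasing functions) is exactly the classical Maz'ya-type truncation argument behind the cited result, so the route is the right one. (Two small slips: regularity is Assumption~\ref{A1}, not \ref{A3}, and one should say a word about why the subordinated form $(\mathcal{E}^{(\delta)},\mathcal{F}^{(\delta)})$ inherits a core, e.g.\ via $\mathcal{E}^{(\delta)}(f,f)\le C(\mathcal{E}(f,f)+\|f\|_{L^2(X,\mu)}^2)$ from spectral calculus.)

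The genuine problem is the key one-dimensional inequality, which is false as you state it: in $\bigl(\int_0^\infty t^{2\kappa-1}m(t)\,dt\bigr)^{1/(2\kappa)}\le C_\kappa\int_0^\infty t\,m(t)^{1/\kappa}\,dt$ the two sides scale differently (replacing $m$ by $\lambda m$ multiplies the left side by $\lambda^{1/(2\kappa)}$ and the right by $\lambda^{1/\kappa}$, so the ratio blows up as $\lambda\to0^+$). The correct statement carries the power $\kappa$ (equivalently a square root after taking roots): for nonincreasing $m\ge0$, $\int_0^\infty t^{2\kappa-1}m(t)\,dt\le C_\kappa\bigl(\int_0^\infty t\,m(t)^{1/\kappa}\,dt\bigr)^{\kappa}$, proved by your dyadic decomposition but with the embedding $\sum_j b_j^{\kappa}\le\bigl(\sum_j b_j\bigr)^{\kappa}$ applied to $b_j=2^{2j}m(2^j)^{1/\kappa}$; your exponent $1/(2\kappa)$ in ``$\sum a_j\le(\sum a_j^{1/(2\kappa)})^{2\kappa}$'' instead produces the quantity $\sum_j 2^{j}m(2^j)^{1/(2\kappa)}\simeq\int_0^\infty m(t)^{1/(2\kappa)}\,dt$, which is \emph{not} controlled by $\int_0^\infty t\,\mathrm{Cap}_0(E_t)\,dt$ (Cauchy--Schwarz would cost the divergent factor $\int_0^\infty t^{-1}\,dt$), so the step as written fails. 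With the corrected inequality the chain reads $\int_X f^{2\kappa}\,d\mu\le 2\kappa\,C_\kappa\,\Theta^{\kappa}\bigl(\int_0^\infty t\,\mathrm{Cap}_0(E_t)\,dt\bigr)^{\kappa}\le 2\kappa\,C_\kappa\,(2\Theta)^{\kappa}\,\mathcal{E}^{(\delta)}(f,f)^{\kappa}$, and taking the $(2\kappa)$-th root gives \eqref{E:Sobolev-nonlocal} with a constant of order $C_\kappa'\sqrt{\Theta}$; your claimed constant $C_\kappa\cdot4\Theta$ has the wrong homogeneity, since the lemma bounds $\int_0^\infty 2t\,\mathrm{Cap}_0(E_t)\,dt$ by $4\,\mathcal{E}^{(\delta)}(f,f)$ while the target involves $\sqrt{\mathcal{E}^{(\delta)}(f,f)}$. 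Once this exponent is fixed, the argument is complete and matches the cited proof.
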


The inequality~\eqref{eq:isoperim-nonlocal} is an analog of the isoperimetric inequality adapted to the non-local Dirichlet form $(\mathcal{E}^{(\delta)},\mathcal{F}^{(\delta)})$.
The optimal constant $\Theta$ is called the isoperimetric constant of $\mathcal{E}^{(\delta)}$ and it is a non-local analog of the Cheeger constant.
Indeed, \cite[Theorem~2.4.1]{FOT94} claims even more, namely that~\eqref{eq:isoperim-nonlocal} is equivalent to a Sobolev type inequality like~\eqref{E:Sobolev-nonlocal}.
When such an inequality is available, the total capacity of a compact set $K\subset X$,
\[
\text{Cap}_1(K):=\inf\Big\{ \|f\|_{L^2(X,\mu)}^2+\mathcal{E}^{(\delta)}(f,f)~\colon~f\in\mathcal{F}\cap C_0(X)\text{ with }f(x)\geq 1\;\forall\,x\in K\Big\},
\]
has the same class of null capacity sets as $\text{Cap}_0$. We refer the interested reader to~\cite[Theorem~2.4.3]{FOT94} for examples of Sobolev type inequality for measures in Euclidean spaces.

\medskip

%
%

We point out here that this notion of isoperimetric inequality is weaker than the classical one, where $\text{Cap}_0(K)$ is replaced with the relative $1$-capacity of $K$ given by
\[
{\rm{Cap}}(K)=\inf\Big\{\int_X|\nabla f|\, d\mu~\colon~f\in \mathcal{F}\cap C_0(X)\text{ with }f(x)\ge 1\;\forall\,x\in K\Big\}.
\]
In the case of non-local Dirichlet forms the quantity $\int_X|\nabla f|\, d\mu$ is not defined in the usual sense, although a version of an integral of this type is available for arbitrary Dirichlet forms, c.f.~\cite{HRT13}. 

\bibliographystyle{amsplain}
\bibliography{BV6_Refs}

\

\ 

P.A.R.: \url{paruiz@math.tamu.edu}  \\
Department of Mathematics,
Texas A\&M University,
College Station, TX 77843

\

N.S.: \url{shanmun@uc.edu} \\
Department of Mathematical Sciences, 	
University of Cincinnati, 	
Cincinnati, OH 45221

\

F.B.: \url{fabrice.baudoin@uconn.edu}
L.C.: \url{li.4.chen@uconn.edu}
L.R.: \url{rogers@math.uconn.edu}
A.T.: \url{teplyaev@uconn.edu}\\
Department of Mathematics,
University of Connecticut,
Storrs, CT 06269

\end{document}